\documentclass[11pt,reqno]{amsart}

\usepackage{amssymb,amsmath,amsfonts,amsthm}
\usepackage{bm,cite,graphicx}

\setlength{\topmargin}{-1.5cm}
\setlength{\oddsidemargin}{0.0cm}
\setlength{\evensidemargin}{0.0cm}
\setlength{\textwidth}{16.7cm}
\setlength{\textheight}{23cm}
\headheight 20pt
\headsep    26pt
\footskip 0.4in

\newtheorem{theorem}{Theorem}[section]
\newtheorem{lemma}[theorem]{Lemma}

\newtheorem{definition}[theorem]{Definition}

\numberwithin{equation}{section}

\begin{document}

\title[inverse random source problems]{Inverse random source problems
for time-harmonic acoustic and elastic waves}

\author{Jianliang Li}
\address{School of Mathematics and Statistics, Changsha University of Science
and Technology, Changsha, 410114, P.R. China.}
\email{lijl@amss.ac.cn}

\author{Tapio Helin}
\address{Department of Mathematics and Statistics, University of Helsinki,
Helsinki, Finland.}
\email{tapio.helin@helsinki.fi}

\author{Peijun Li}
\address{Department of Mathematics, Purdue University, West Lafayette, Indiana
47907, USA.}
\email{lipeijun@math.purdue.edu}


\subjclass[2010]{78A46, 65C30}

\keywords{Inverse source problem, Helmholtz equation, elastic wave equation,
Gaussian random function, uniqueness} 

\begin{abstract}
This paper concerns the random source problems for the time-harmonic acoustic
and elastic wave equations in two and three dimensions. The goal is to
determine the compactly supported external force from the radiated wave field
measured in a domain away from the source region. The source is assumed to be a
microlocally isotropic generalized Gaussian random function such that its
covariance operator is a classical pseudo-differential operator. Given such a
distributional source, the direct problem is shown to have a unique solution by
using an integral equation approach and the Sobolev embedding theorem. For the
inverse problem, we demonstrate that the amplitude of the scattering field
averaged over the frequency band, obtained from a single realization of the
random source, determines uniquely the principle symbol of the covariance
operator. The analysis employs asymptotic expansions of the Green functions and
microlocal analysis of the Fourier integral operators associated with the
Helmholtz and Navier equations. 
\end{abstract}

\maketitle

\section{Introduction}\label{sec1}

The inverse source scattering in waves, as an important and active research
subject in inverse scattering theory, are to determine the unknown sources that
generate prescribed radiated wave patterns \cite{I}. It has been considered as a
basic mathematical tool for the solution of many medical imaging modalities
\cite{SA}, such as magnetoencephalography (MEG), electroencephalography (EEG),
electroneurography (ENG). These imaging modalities are non-invasive
neurophysiological techniques that measure the electric or magnetic fields
generated by neuronal activity of the brain. The spatial distributions of the
measured fields are analyzed to localize the sources of the activity within the
brain to provide information about both the structure and function of the brain
\cite{ABF, FKM, NOH}. The inverse source scattering problem has also attracted
much research in the community of antenna design and synthesis \cite{MD}. A
variety of antenna-embedding materials or substrates, including non-magnetic
dielectrics, magneto-dielectrics, and double negative meta-materials are of
great interest. 

Driven by these significant applications, the inverse source scattering problems
have continuously received much attention and have been extensively studied by
many researchers. There are a lot of available mathematical and numerical
results, especially for the acoustic waves or the Helmholtz equation
\cite{ACTV, BLRX, BN, DML, EV, ZG}. In general, the inverse source problem does
not have a unique solution due to the existence of non-radiating sources
\cite{BC, BC-JMP, DS, HKP}. Some addition constraint or information is needed in
order to obtain a unique solution, such as to seek the minimum energy solution
which represents the pseudo-inverse solution for the inverse problem.
For electromagnetic waves, Ammari et al. showed uniqueness and presented an
inversion scheme in \cite{ABF} to reconstruct dipole sources based on a
low-frequency asymptotic analysis of the time-harmonic Maxwell equations. In
\cite{AM}, Albanese and Monk discussed uniqueness and non-uniqueness of the
inverse source problems for Maxwell's equations. Computationally, a more serious
issue is the lack of stability, i.e., a small variation in the measured data may
lead to a huge error in the reconstruction. Recently, it has been realized that
the use of multi-frequency data can overcome the difficulties of non-uniqueness
and instability which are encountered at a single frequency. In \cite{BLT}, Bao
et al. initialized the mathematical study on the stability of the inverse source
problem for the Helmholtz equation by using multi-frequency data. Since then,
the increasing stability has become an interesting research topic in the study
of inverse source problems \cite{BLZ, CIL, LY}. We refer to \cite{BLLT} for a
topic review on solving general inverse scattering problems with
multi-frequencies. 

Recently, the elastic wave scattering problems have received ever increasing
attention for their important applications in may scientific areas such as
geophysics and seismology \cite{ABGK, PC, LL, MP, JT}. However, the inverse
source problem is much less studied for the elastic waves. The elastic wave
equation is challenging due to the coexistence of compressional and shear waves
that have different wavenumbers. Consequently, the Green tensor of the Navier
equation has a more complicated expression than the Green function of the
Helmholtz equation does. A more sophisticated analysis is required.

In many applications the source and hence the radiating field may not be
deterministic but rather are modeled by random processes \cite{BAZC}. Therefore,
their governing equations are stochastic differential equations. Although the
deterministic counterparts have been well studied, little is known for the
stochastic inverse problems due to randomness and uncertainties. A uniqueness
result may be found in \cite{D} for an inverse random source problem. It
was shown that the auto-correlation function of the random source was uniquely
determined by the auto-correlation function of the radiated field. Recently,
effective mathematical models and efficient computational methods have been
developed in \cite{BCL16, BCL18, BCLZ, BX, PL, LCL} for inverse random source
scattering problems, where the stochastic wave equations are considered and the
random sources are assumed to be driven by additive white noise. The inverse
problems are formulated to determine the statistical properties, such as the
mean and variance, of the random source from the boundary measurement of the
wave field at multiple frequencies. The method requires to know the expectation
of the scattering data. By the strong law of large numbers, the expectation has
to be approximated by taking fairly large number of realizations of the
measurement. We refer to \cite{KS} for statistical inversion theory on general
random inverse problems. 

In this paper, we consider a new model for the random source. A unified
theory is developed on both of the direct and inverse scattering problems for
the time-harmonic acoustic and elastic wave equations. The source is assumed to
be a generalized Gaussian random function which is supported in a bounded domain
$D\subset\mathbb R^d$, $d=2$ or $3$. In addition, we assume that the covariance
of the random source is described by a pseudo-differential operator with the
principle symbol given by $\phi(x)|\xi|^{-m}, m\in [d,d+\frac{1}{2})$, where
$\phi$ is a smooth non-negative function supported on $D$ and is called the
micro-correlation strength of the source. This large class of random fields
includes stochastic processed like the fractional Brownian motion and Markov
field \cite{LPS}. In fact, when $m\in [d,d+\frac{1}{2})$, we can only ensure
that the source belongs to a Sobolev space with negative smoothness index almost
surely. Hence, the direct scattering problem requires a careful analysis since
the source is a so-called rough field. In this work, we establish the
well-posedness of the direct scattering problems for both wave equations with
such rough sources. The inverse scattering problem aims at reconstructing the
micro-correlation strength of the source $\phi$ from the scattered field
measured in a bounded domain $U$ where $\overline U\cap \overline D=\emptyset$.
For a single realization of the random source, we measure the amplitude of the
scattering field averaged over the frequency band in a bounded and simply
connected domain. Combining harmonic and microlocal analysis, we show that: for
acoustic waves, the micro-correlation strength function $\phi$ can be recovered
by these measurements; For elastic waves, note that the source is a vector, if
the components of the random source are independent and the principle symbol of
the pseudo-differential operator of each component coincides, thus, the
micro-correlation strength function $\phi$ can be determined uniquely by these
measurements. 

This work is motivated by \cite{LPS, CHL}, where an inverse problem
was considered for the two-dimensional random Schr\"{o}dinger equation. The
potential function in the Schr\"{o}dinger equation was assumed to be a Gaussian
random function with a pseudo-differential operator describing its covariance.
It was shown that the principle symbol of the covariance operator can be
determined uniquely by the backscattered field, generated by a single
realization of the random potential and a point source as the incident field. A
closely related problem can be found in \cite{HLP}. The authors considered the
uniqueness for an inverse acoustic scattering problem in a half-space with an
impedance boundary condition, where the impedance coefficient was assumed to a
Gaussian random function whose covariance operator is a pseudo-differential
operator. 

The paper is organized as follows. In Section 2, we introduce some commonly
used Sobolev spaces, give a precise mathematical description of the
generalized Gaussian random function, and present several lemmas on rough
fields and random variables. Section 3 is devoted to the study of the acoustic
wave equation in the two- and three-dimensional cases. The well-posedness of
the direct problems are examined. The uniqueness of the inverse problem are
achieved. Section 4 addresses the two- and three-dimensional elastic wave
equations. Analogous results are obtained. The direct problem is shown to
have a unique solution and the inverse problem is proved to have the uniqueness
to recover the principle symbol of the covariance operator for the random
source. This paper is concluded with some general remarks in Section 5.

\setcounter{equation}{0}

\section{Preliminaries}

In this section, we introduce some necessary notation such as 
Sobolev spaces and generalized Gaussian random functions which are used
throughout the paper. 

\subsection{Sobolev spaces}

Let $\mathbb R^d$ be the $d$-dimensional space, where $d=2 \text{ or }
3$. Denote by $C_0^{\infty}(\mathbb R^d)$ the set of smooth functions
with compact support and by  $\mathcal{D'}(\mathbb R^d)$ the set of
generalized (distributional) functions. For $1<p<\infty, s\in\mathbb R$, 
the Sobolev space $H^{s,p}(\mathbb R^d)$ is defined by
\[
H^{s,p}(\mathbb R^d)=\{h=(I-\Delta)^{-\frac{s}{2}}g : g\in L^p(\mathbb R^d)\},
\]
which has the norm
\[
\|h\|_{H^{s,p}(\mathbb R^d)}=\|(I-\Delta)^{\frac{s}{2}}h\|_{L^p(\mathbb R^d)}.
\]
With the definition of Sobolev spaces in the whole space, we can define the
Sobolev spaces $H^{s,p}(V)$ for any Lipschitz domain $V\subset \mathbb R^d$
as the restrictions to $V$ of the elements in $H^{s,p}(\mathbb R^d)$. The norm
is defined by
\[
\|h\|_{H^{s,p}(V)}=\inf\{\|g\|_{H^{s,p}(\mathbb R^d)} : g|_V=h\}.
\]
According to \cite{JK}, for $s\in\mathbb R$ and $1<p<\infty$, we can define
$H^{s,p}_0(V)$ as the space of all distributions $h\in H^{s,p}(\mathbb R^d)$
such that ${\rm supp} h\subset\overline{V}$ and the the norm is defined by
\[
\|h\|_{H^{s,p}_0(V)}=\|h\|_{H^{s,p}(\mathbb R^d)}.
\]
It is known that $C_0^{\infty}(V)$ is dense in $H_0^{s,p}(V)$ for any
$1<p<\infty, s\in\mathbb R$; $C_0^{\infty}(V)$ is dense in $H^{s,p}(V)$ for any
$1<p<\infty, s\leq 0$; $C^{\infty}(\overline{V})$ is dense in $H^{s,p}(V)$ for
any  $1<p<\infty, s\in\mathbb R$. Additionally, by \cite[Propositions 2.4 and
2.9]{JK}, for any $s\in\mathbb R$, $p,q\in (1,\infty)$ satisfying
$\frac{1}{p}+\frac{1}{q}=1$, we have
\[
H^{-s,q}_0(V)=(H^{s,p}(V))'\quad {\rm and}\quad H^{-s,q}(V)=(H_0^{s,p}(V))',
\]
where the prime denotes the dual space.

\subsection{Generalized Gaussian random functions}

In this subsection, we provide a precise mathematical description of the
generalized Gaussian random function.  Let $(\Omega, \mathcal{F}, \mathcal{P})$
be a complete probability space. The function $q$ is said to be a generalized
Gaussian random function if $q:\Omega \rightarrow \mathcal{D'}(\mathbb R^d)$ is
a measurable map such that for every $\hat{\omega}\in \Omega$, the mapping
$\hat{\omega}\in \Omega\longmapsto \langle q(\hat{\omega}), \psi\rangle$ is a
Gaussian random variable for all $\psi\in C_0^{\infty}(\mathbb R^d)$. The
expectation and the covariance of the generalized Gaussian random function $q$
can be defined by
\begin{eqnarray*}
\mathbb Eq: \psi\in C_0^{\infty}(\mathbb R^d)&\longmapsto& \mathbb E\langle
q,\psi\rangle\in\mathbb R,\\
{\rm Cov} q:(\psi_1,\psi_2)\in C_0^{\infty}(\mathbb R^d)^2&\longmapsto& {\rm
Cov}(\langle q,\psi_1\rangle,\langle q,\psi_2\rangle)\in\mathbb R,
\end{eqnarray*}
where $\mathbb E\langle q,\psi\rangle$ denotes the expectation of $\langle
q,\psi\rangle$ and 
\begin{eqnarray*}
{\rm Cov}(\langle q,\psi_1\rangle,\langle q,\psi_2\rangle)=\mathbb E((\langle
q,\psi_1\rangle-\mathbb E\langle q,\psi_1\rangle)(\langle
q,\psi_2\rangle-\mathbb E\langle q,\psi_2\rangle))
\end{eqnarray*}
denotes the covariance of $\langle q, \psi_1\rangle$ and $\langle
q,\psi_2\rangle$. The covariance operator $C_{q}: C_0^{\infty}(\mathbb
R^d)\rightarrow \mathcal{D'}(\mathbb R^d)$ is defined by 
\begin{eqnarray}\label{a1}
\langle C_{q}\psi_1,\psi_2\rangle={\rm Cov}(\langle q,\psi_1\rangle,\langle
q,\psi_2\rangle)=\mathbb E(\langle q-\mathbb Eq,\psi_1\rangle \langle q-\mathbb
Eq,\psi_2\rangle).
\end{eqnarray}
Let $k_{q}(x,y)$ be the Schwartz kernel of the covariance operator $C_{q}$. We
also call $k_{q}(x,y)$ the covariance function of $q$. Thus, (\ref{a1}) means
that 
\begin{eqnarray}\label{a2}
k_{q}(x,y)=\mathbb E((q(x)-\mathbb E q(x))(q(y)-\mathbb E q(y)))
\end{eqnarray}
in the sense of generalized functions. 

In this paper, we assume that each component of the external source is  a
generalized, microlocally isotropic Gaussian random function. For this end, let
$D\subset\mathbb R^d$ be a bounded and simply connected domain. We
introduce the following definition. 

\begin{definition}
A generalized Gaussian random function $q$ on $\mathbb R^d$ is called
microlocally isotropic of order $m$ in D, if the realizations of $q$ are almost
surely supported in the domain $D$ and its covariance operator $C_q$ is a
classical pseudo-differential operator having the principal symbol
$\phi(x)|\xi|^{-m}$, where $\phi\in C_0^{\infty}(\mathbb R^d)$, ${\rm supp}
\phi\subset D$ and $\phi(x)\geq 0$ for all $x\in \mathbb R^d$.
\end{definition}

In particular, we are interested in the case $m\in[d,d+\frac{1}{2})$, which
corresponds to rough fields. Now we introduce three lemmas and give an
assumption which will be used in subsequent analysis.  

\begin{lemma}\label{lemma1a}
Let $f$ be a generalized and microlocally isotropic Gaussian random function of
order $m$ in $D$. If $m=d$, then $f \in H^{-\varepsilon, p}(D)$ almost
surely for all $\varepsilon>0, 1<p<\infty$.  If $m\in(d,d+\frac{1}{2})$,
then $f\in C^{\alpha}(D)$ almost surely for all $\alpha\in(0,
\frac{m-d}{2})$.
\end{lemma}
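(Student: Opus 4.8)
The plan is to exploit the fact that the covariance operator $C_f$ is a classical pseudo-differential operator of order $-m$, hence bounded $H^{s,p}_0(D) \to H^{s+m,p}(D)$ for all $s$ and $1<p<\infty$, and to combine this mapping property with a Kolmogorov-type argument (or, when $m=d$, a direct Sobolev-regularity estimate) applied to the Gaussian field $f$. The key quantitative input is that for a (mean-zero, without loss of generality) Gaussian field, all moments of $\langle f, \psi\rangle$ are controlled by the variance $\langle C_f \psi, \psi\rangle$, which in turn is controlled by $\|\psi\|_{H^{s,p'}(D)}$-type norms through the order of $C_f$.

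First I would reduce to the zero-mean case: the expectation $\mathbb E f$ is a fixed distribution, and by the assumption that realizations are supported in $D$ together with the pseudo-differential structure one checks $\mathbb E f$ is smooth, so it suffices to treat $f - \mathbb E f$. Next, for the case $m=d$, I would test $f$ against $(I-\Delta)^{-\varepsilon/2}$ applied to $L^{p'}$ functions: for $g \in L^{p'}(D)$ one has
\[
\mathbb E\,|\langle f,\, (I-\Delta)^{-\varepsilon/2} g\rangle|^2 = \langle C_f (I-\Delta)^{-\varepsilon/2} g,\, (I-\Delta)^{-\varepsilon/2} g\rangle \lesssim \|(I-\Delta)^{-\varepsilon/2} g\|_{H^{(d-\varepsilon)/2, \cdot}}^2,
\]
and the operator $(I-\Delta)^{-\varepsilon/2} C_f (I-\Delta)^{-\varepsilon/2}$ is a $\Psi$DO of order $-d-2\varepsilon < -d$, hence Hilbert–Schmidt on $L^2(D)$; from this one deduces that $(I-\Delta)^{-\varepsilon/2} f \in L^2(D)$ almost surely, i.e. $f \in H^{-\varepsilon,2}(D)$ a.s., and then a standard Sobolev-embedding / interpolation argument (using that $D$ is bounded so one can trade integrability for a bit of smoothness) upgrades this to $f \in H^{-\varepsilon,p}(D)$ a.s. for every $p \in (1,\infty)$ and every $\varepsilon > 0$.

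For the case $m \in (d, d+\tfrac12)$, I would run the Kolmogorov–Chentsov continuity theorem. The pointwise values $f(x) = \langle f, \delta_x\rangle$ make sense because $C_f$ has order $-m < -d$, so its Schwartz kernel $k_f(x,y)$ is continuous off the diagonal and, more importantly, the diagonal-regularity of a $\Psi$DO of order $-m$ gives the Hölder-type estimate
\[
\mathbb E\,|f(x) - f(y)|^2 = k_f(x,x) - 2k_f(x,y) + k_f(y,y) \lesssim |x-y|^{m-d}
\]
uniformly for $x,y \in D$ (this is the standard computation: the singularity of the kernel of a classical $\Psi$DO of order $-m$ behaves like $|x-y|^{m-d}$ when $d < m < d+2$, up to logarithmic corrections only at the endpoint $m=d$, which is excluded here). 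Since $f$ is Gaussian, $\mathbb E|f(x)-f(y)|^{2n} \lesssim |x-y|^{n(m-d)}$ for every integer $n$; choosing $n$ large enough that $n(m-d) > d$, Kolmogorov's theorem yields a modification of $f$ that is a.s. Hölder continuous of exponent $\alpha$ for every $\alpha < \tfrac{n(m-d) - d}{2n}$, and letting $n \to \infty$ this covers every $\alpha < \tfrac{m-d}{2}$, which is exactly the claim.

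The main obstacle is the precise control of the near-diagonal behaviour of the Schwartz kernel $k_f(x,y)$ of a classical $\Psi$DO of order $-m$: one must justify the bound $|k_f(x,x) - 2k_f(x,y) + k_f(y,y)| \lesssim |x-y|^{m-d}$ rigorously, which requires writing $k_f$ via its oscillatory-integral representation, splitting the $\xi$-integral at frequency $\sim |x-y|^{-1}$, and estimating the principal-symbol contribution $\phi(x)|\xi|^{-m}$ against the lower-order remainder (which is order $-m-1$ and hence contributes a strictly better power). Everything else—reduction to mean zero, the Gaussian moment bounds, Kolmogorov's theorem, and the Hilbert–Schmidt / Sobolev-embedding step for $m=d$—is routine once this kernel estimate is in hand.
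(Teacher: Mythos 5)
The paper does not actually prove this lemma: it is quoted as a direct consequence of Theorem~2 in the cited work of Lassas, P\"aiv\"arinta and Saksman, so your proposal is being measured against a proof strategy rather than a written argument. Your treatment of the case $m\in(d,d+\tfrac12)$ is the standard and correct route: the second-difference kernel estimate $\mathbb E|f(x)-f(y)|^2\lesssim|x-y|^{m-d}$ for a classical $\Psi$DO of order $-m$ with $0<m-d<2$, Gaussian hypercontractivity to get all even moments, and Kolmogorov--Chentsov with $n\to\infty$ to reach every $\alpha<\tfrac{m-d}{2}$. That part is sound, granting the kernel estimate you correctly flag as the technical core.

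The case $m=d$ contains a genuine gap at the final ``upgrade'' step. From $f\in H^{-\varepsilon,2}(D)$ almost surely you cannot reach $f\in H^{-\varepsilon,p}(D)$ for $p>2$ by Sobolev embedding or by boundedness of $D$: on a bounded domain the inclusions of Lebesgue spaces run $L^p\subset L^2$ for $p\ge 2$, so $H^{s,p}(D)\hookrightarrow H^{s,2}(D)$ and not the reverse; to pass from $L^2$-integrability to $L^p$-integrability at fixed smoothness one must \emph{spend} $d(\tfrac12-\tfrac1p)$ derivatives, which for $p$ near $\infty$ is nearly $d/2$ and cannot be absorbed into an arbitrarily small $\varepsilon$. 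The correct (and standard) repair bypasses the Hilbert-space step entirely: write $g=(I-\Delta)^{-\varepsilon/2}f$ and estimate
\begin{equation*}
\mathbb E\|g\|_{L^p(D)}^p=\int_D\mathbb E|g(x)|^p\,{\rm d}x\lesssim\int_D\bigl(\mathbb E|g(x)|^2\bigr)^{p/2}{\rm d}x,
\end{equation*}
using Gaussianity of $g(x)$; the variance $\mathbb E|g(x)|^2$ is the diagonal of the Schwartz kernel of $(I-\Delta)^{-\varepsilon/2}C_f(I-\Delta)^{-\varepsilon/2}$, a $\Psi$DO of order $-d-2\varepsilon<-d$, hence bounded, so $\mathbb E\|g\|_{L^p(D)}^p<\infty$ and $f\in H^{-\varepsilon,p}(D)$ almost surely for every $p<\infty$. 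A secondary slip in the same paragraph: Hilbert--Schmidt is not the right property for concluding that a Gaussian measure is supported on $L^2$; you need the covariance to be trace class. Your operator of order $-d-2\varepsilon$ on a bounded domain is in fact trace class (Weyl asymptotics give eigenvalues $O(j^{-1-2\varepsilon/d})$), so the conclusion survives, but the justification should be corrected.
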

\begin{lemma}\label{lemma2a}
Let $X$ and $Y$ be two zero-mean random variables such that the pair $(X,Y)$ is
a Gaussian random vector. Then we have
\begin{eqnarray*}
\mathbb E((X^2-\mathbb E X^2)(Y^2-\mathbb E Y^2))=2(\mathbb EXY)^2.
\end{eqnarray*}
\end{lemma}
\begin{lemma}\label{lemma3a}
Let $X_t$, $t\geq 0$ be a real valued stochastic process with a continuous path
of zero mean, i.e., $\mathbb E X_t = 0$. Assume that for some constants $c>0$
and
$\beta>0$ such that the condition
\begin{eqnarray*}
|\mathbb E(X_tX_{t+r})|\leq c(1+r)^{-\beta}
\end{eqnarray*}
holds for all $t,r\geq 0$. Then 
\begin{eqnarray*}
\lim_{Q\rightarrow\infty}\frac{1}{Q}\int_1^QX_t{\rm d}t=0
\end{eqnarray*}
almost surely.
\end{lemma}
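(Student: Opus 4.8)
\emph{Proof strategy.} The plan is to first establish mean-square decay of the averages at a polynomial rate, and then to upgrade this to almost sure convergence by passing to a polynomially spaced subsequence and controlling the oscillation of the averages between consecutive subsequence points. Set
\[
Y_Q:=\frac1Q\int_1^Q X_t\,{\rm d}t,\qquad Q\ge 1,
\]
which is well defined because $t\mapsto X_t$ has continuous paths; in particular the map $(\hat\omega,t)\mapsto X_t(\hat\omega)$ is jointly measurable, so Fubini's theorem applies below. Using Fubini together with the decay hypothesis $|\mathbb E(X_sX_t)|\le c(1+|s-t|)^{-\beta}$,
\[
\mathbb E Y_Q^2=\frac1{Q^2}\int_1^Q\!\int_1^Q\mathbb E(X_sX_t)\,{\rm d}s\,{\rm d}t\le\frac{c}{Q^2}\int_1^Q\!\int_1^Q(1+|s-t|)^{-\beta}\,{\rm d}s\,{\rm d}t.
\]
Bounding the inner integral by $2\int_0^Q(1+u)^{-\beta}\,{\rm d}u$, which is $O(1)$ when $\beta>1$, $O(\log Q)$ when $\beta=1$ and $O(Q^{1-\beta})$ when $\beta<1$, one obtains a constant $C>0$ and an exponent $\delta\in(0,1]$ (for instance $\delta=\min(\beta,1)$ when $\beta\ne1$, and $\delta=\tfrac12$ when $\beta=1$, after absorbing the logarithm) such that $\mathbb E Y_Q^2\le C\,Q^{-\delta}$ for all $Q\ge1$.

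Next I would take $Q_n:=n^{2/\delta}$, so that $\sum_n\mathbb E Y_{Q_n}^2\le C\sum_n n^{-2}<\infty$; by monotone convergence $\sum_n Y_{Q_n}^2<\infty$ almost surely, whence $Y_{Q_n}\to0$ almost surely. For a general $Q\in[Q_n,Q_{n+1}]$ I would write
\[
Y_Q=\frac{Q_n}{Q}\,Y_{Q_n}+\frac1Q\int_{Q_n}^QX_t\,{\rm d}t,
\]
so that $|Y_Q|\le|Y_{Q_n}|+Q_n^{-1}Z_n$ with $Z_n:=\int_{Q_n}^{Q_{n+1}}|X_t|\,{\rm d}t$. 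Since the hypothesis at $r=0$ gives $\mathbb E X_t^2\le c$, the Cauchy--Schwarz inequality yields $\mathbb E(|X_s|\,|X_t|)\le c$ and hence $\mathbb E Z_n^2\le c\,(Q_{n+1}-Q_n)^2$. Because $Q_{n+1}-Q_n=O(n^{2/\delta-1})$, it follows that $\mathbb E(Z_n/Q_n)^2=O(n^{-2})$, which is summable, so $Z_n/Q_n\to0$ almost surely. Combining this with $Y_{Q_n}\to0$ shows $\sup_{Q\in[Q_n,Q_{n+1}]}|Y_Q|\to0$ almost surely as $n\to\infty$, i.e. $\lim_{Q\to\infty}Y_Q=0$ almost surely, which is the claim.

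I expect the genuine difficulty to be the passage from mean-square to almost sure convergence, since the hypothesis supplies only second-moment information. This forces the subsequence $Q_n$ to grow fast enough that the $L^2$-errors are summable --- which is delicate precisely in the regime $\beta\le1$, where the rate $\delta$ is small and a logarithm has to be absorbed when $\beta=1$ --- and it also requires the separate second-moment estimate on the increments $Z_n$, which rests on the uniform variance bound $\mathbb E X_t^2\le c$ obtained from the hypothesis at $r=0$. The continuity of paths is needed only to make the integrals and the application of Fubini's theorem legitimate.
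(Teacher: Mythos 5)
Your proof is correct. Note that the paper does not prove this lemma at all: it simply states that the result is an immediate corollary of Cram\'er and Leadbetter \cite[p.~94]{CL}. Your argument --- mean-square decay of $Y_Q$ from the covariance bound, a polynomially spaced subsequence $Q_n$ chosen so that the $L^2$ errors are summable, and control of the oscillation on $[Q_n,Q_{n+1}]$ via the uniform variance bound obtained from the hypothesis at $r=0$ --- is exactly the classical argument underlying that citation, and it is carried out correctly in all three regimes of $\beta$ (the only implicit use of path continuity beyond Fubini is the measurability of $\sup_{Q\in[Q_n,Q_{n+1}]}|Y_Q|$, which is harmless).
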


Lemma \ref{lemma1a} is a direct consequence of Theorem 2 in \cite{LPS}. 
Lemma \ref{lemma2a} is shown in \cite{CHL} as Lemma 4.2. The ergodic
result of Lemma \ref{lemma3a} is an immediate corollary of \cite[p. 94]{CL}.
To establish the main results, we need the following assumption. 

{\bf Assumption A}: The external source $f$ is assumed to have a compact
support $D\subset \mathbb R^d$. Let $U\subset\mathbb R^d\setminus \overline{D}$
be the measurement domain of the wave field. We assume that $D$ and $U$
are two bounded and simply connected domains and there is a positive distance
between $D$ and $U$.

\section{Acoustic waves}

This section addresses the direct and inverse source scattering problems for
the Helmholtz equation in two- and three-dimensional space. The external source
is assumed to be a generalized Gaussian random function whose covariance
operator is a classical pseudo-differential operator. The direct problem is
shown to have a unique solution. For the inverse problem, we show that the
principle symbol of the covariance operator can be determined uniquely by the
scattered field obtained from a single realization of the random source. 

\subsection{The direct scattering problem}

Consider the Helmholtz equation in a homogeneous medium
\begin{eqnarray}\label{r1}
\Delta u+\kappa^2u=f\quad{\rm in} ~ \mathbb R^d,
\end{eqnarray}
where $\kappa>0$ is the wavenumber, $u$ is the wave field, and $f$ is a
generalized Gaussian random function. Note that $u$ is a random field since $f$
is a random function. To ensure the uniqueness of the solution for \eqref{r1},
the usual Sommerfeld radiation condition is imposed
\begin{eqnarray}\label{r2}
\lim_{r\rightarrow\infty}r^{\frac{d-1}{2}}\left(\partial_r u-{\rm
i}\kappa u\right)=0,\quad r=|x|,
\end{eqnarray} 
uniformly for all directions $\hat{x}=x/|x|$. In addition, the external source
function $f$ satisfies the following assumption. 

{\bf Assumption B:} The generalized Gaussian random field $f$ is microlocally
isotropic of order $m$ in $D$, where $m\in[d,d+\frac{1}{2})$. The principle
symbol of its covariance operator $C_f$ is $\phi(x)|\xi|^{-m}$ with $\phi\in
C_0^{\infty}(D)$ and $\phi\geq 0$. Moreover, the mean value of $f$ is zero,
i.e., $\mathbb E(f)=0$.

By Lemma \ref{lemma1a}, the random source $f(\hat{\omega})$ belongs with
probability one to the Sobolev space $H^{-\varepsilon,p}(D)$ for
all $\varepsilon>0, 1<p<\infty$. Hence it suffices to show that the direct
scattering problem is well-posed when $f$ is a deterministic non-smooth
function in $H^{-\varepsilon, p}(D)$.

First, we show some regularity results of the fundamental solution. These
results play an important role in the proof of the well-posedness. Let
$\Phi_d(x,y,\kappa)$ be the fundamental solution for the
two- and three-dimensional Helmholtz equation. Explicitly, we have 
\begin{eqnarray}\label{r3}
\Phi_2(x,y,\kappa)=\frac{{\rm i}}{4}H_0^{(1)}(\kappa|x-y|), \quad 
\Phi_3(x,y,\kappa)=\frac{1}{4\pi}\frac{e^{{\rm i}\kappa |x-y|}}{|x-y|},
\end{eqnarray}
where $H_0^{(1)}$ is the Hankel function of the first kind with order zero.
We shall study the asymptotic properties of the fundamental
solutions and their derivatives when $x$ is close to $y$. For the
two-dimensional case, we recall that 
\begin{eqnarray}\label{r4}
H_n^{(1)}(t)=J_n(t)+{\rm i}Y_n(t),
\end{eqnarray}
where $J_n$ and $Y_n$ are the Bessel functions of the first and second kind
with order $n$, respectively. They admit the following expansions
\begin{align}\label{r5}
J_n(t)&=\sum_{p=0}^{\infty}\frac{(-1)^p}{p!(n+p)!}\left(\frac{t}{2}\right)^{
n+2p},\\\notag
Y_n(t)&=\frac{2}{\pi}\left\{\ln\frac{t}{2}+\gamma\right\}J_n(t)-\frac{1}{\pi}
\sum_{p=0}^{n-1}\frac{(n-1-p)!}{p!}\left(\frac{2}{t}\right)^{n-2p}\\\label{r6}
&\qquad-\frac{1}{\pi}\sum_{p=0}^{\infty}\frac{(-1)^p}{p!(n+p)!}\left(\frac{t}{2}
\right)^{n+2p}\{\psi(p+n)+\psi(p)\},
\end{align}
where
$\gamma:=\lim_{p\rightarrow\infty}\left\{\sum_{j=1}^p j^{-1}
-\ln p\right\}$ denotes the Euler constant,  $\psi(0)=0$,
$\psi(p)=\sum_{j=1}^p j^{-1}$, and the finite sum in
(\ref{r6}) is set to be zero for $n=0$. Using (\ref{r4})--(\ref{r6}), we may
verify that 
\begin{align}\label{r7}
H_0^{(1)}(t)&=\frac{2{\rm i}}{\pi}\ln\frac{t}{2}+(1+\frac{2{\rm
i}}{\pi}\gamma)+O(t^2\ln\frac{t}{2}),\\\label{r8}
H_1^{(1)}(t)&=-\frac{2{\rm i}}{\pi}\frac{1}{t}+\frac{{\rm
i}}{\pi}t\ln\frac{t}{2}+(1+\frac{2{\rm i}}{\pi}\gamma-\frac{\rm
i}{\pi})\frac{t}{2}+O(t^3\ln\frac{t}{2}).
\end{align}
Using the recurrence relations for the Hankel function of the first
kind (see \cite[Eq. (5.6.3)]{NN})
\begin{eqnarray}\label{r9}
\frac{\rm d}{{\rm d}t}[t^{-n}H_n^{(1)}(t)]=-t^{-n}H_{n+1}^{(1)}(t), 
\end{eqnarray}
we may show from (\ref{r7})--(\ref{r8}) that 
\begin{align}
\Phi_2(x,y,\kappa)&=\frac{\rm i}{4}H_0^{(1)}(\kappa|x-y|)\notag\\\label{r10}
&=-\frac{1}{2\pi}\ln\frac{|x-y|}{2}+(\frac{\rm
i}{4}-\frac{\gamma}{2\pi})+O(|x-y|^2\ln\frac{|x-y|}{2}),\\\notag
\partial_{y_i}\Phi_2(x,y,\kappa) &=-\frac{\kappa {\rm
i}}{4}(y_i-x_i)\frac{H_1^{(1)}(\kappa|x-y|)}{|x-y|}\\\label{r11}
&=-\frac{1}{2\pi}\frac{y_i-x_i}{|x-y|^2}+O((y_i-x_i)\ln\frac{|x-y|}{2}).
\end{align}

For the three-dimensional case, a simple calculation yields that 
\begin{align}\label{r12}
\Phi_3(x,y,\kappa)&=\frac{e^{{\rm i}\kappa |x-y|}}{4\pi |x-y|},\\\label{r13}
\partial_{y_i} \Phi_3(x,y,\kappa)&=\frac{(y_i-x_i)}{4\pi|x-y|^3}e^{{\rm
i}\kappa |x-y|}({\rm i}\kappa  |x-y|-1).
\end{align}

\begin{lemma}\label{lemma_z}
Given any $x\in \mathbb R^d$, we have $\Phi_2(x,\cdot,\kappa)\in L^2_{\rm
loc}(\mathbb R^2)\cap H^{1,p}_{\rm loc}(\mathbb R^2)$ for any $p\in (1, 2)$ and
$\Phi_3(x,\cdot,\kappa)\in L^2_{\rm loc}(\mathbb R^3)\cap H^{1,p}_{\rm
loc}(\mathbb R^3)$ for any $p\in (1,\frac{3}{2})$.
\end{lemma}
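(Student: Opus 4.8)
The plan is to show the claimed local integrability and Sobolev regularity directly from the explicit asymptotic expansions already derived in \eqref{r10}--\eqref{r13}. Fix $x\in\mathbb R^d$ and let $V$ be any bounded open set; by translation invariance we may assume $x=0$ and work on a ball $B_R$ centered at the origin. The only issue is the behavior of $\Phi_d(0,\cdot,\kappa)$ and its first derivatives near $y=0$, since away from the origin the fundamental solution is smooth. So all estimates reduce to checking integrability of explicit powers of $|y|$ (times possible logarithmic factors) over $B_R$, using the elementary fact that $|y|^{-a}\in L^r(B_R)$ in $\mathbb R^d$ precisely when $ar<d$, and that logarithmic factors are harmless since $|y|^{-a}\big|\ln|y|\big|^k\in L^r(B_R)$ whenever $ar<d$.

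First I would treat $d=2$. From \eqref{r10}, $\Phi_2(0,y,\kappa)=-\frac{1}{2\pi}\ln\frac{|y|}{2}+O(1)+O(|y|^2\ln|y|)$ near the origin; since $\big|\ln|y|\big|^2$ is integrable on $B_R\subset\mathbb R^2$, we get $\Phi_2(0,\cdot,\kappa)\in L^2_{\rm loc}(\mathbb R^2)$. For the gradient, \eqref{r11} gives $\partial_{y_i}\Phi_2(0,y,\kappa)=-\frac{1}{2\pi}\frac{y_i}{|y|^2}+O(|y|\ln|y|)$, whose leading term is $O(|y|^{-1})$. Then $|y|^{-1}\in L^p(B_R)$ in $\mathbb R^2$ iff $p<2$, so $\partial_{y_i}\Phi_2(0,\cdot,\kappa)\in L^p_{\rm loc}(\mathbb R^2)$ for $p\in(1,2)$. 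Combined with the bound on $\Phi_2$ itself (which lies in $L^p_{\rm loc}$ for every $p<\infty$, in particular for $p<2$), this yields $\Phi_2(0,\cdot,\kappa)\in H^{1,p}_{\rm loc}(\mathbb R^2)$ for $p\in(1,2)$, after one observes that the distributional derivative of $\Phi_2(0,\cdot,\kappa)$ away from the origin agrees with the classical one and that no singular measure is created at the origin (the pointwise derivative is already in $L^1_{\rm loc}$, so by a standard cutoff/dominated-convergence argument it is the distributional derivative).

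Next I would treat $d=3$. From \eqref{r12}, $|\Phi_3(0,y,\kappa)|=\frac{1}{4\pi|y|}$, and $|y|^{-1}\in L^2(B_R)$ in $\mathbb R^3$ iff $2<3$, which holds, so $\Phi_3(0,\cdot,\kappa)\in L^2_{\rm loc}(\mathbb R^3)$; likewise $|y|^{-1}\in L^p(B_R)$ for all $p<3$, in particular for $p<\frac{3}{2}$. From \eqref{r13}, $|\partial_{y_i}\Phi_3(0,y,\kappa)|=O(|y|^{-2})$ near the origin, and $|y|^{-2}\in L^p(B_R)$ in $\mathbb R^3$ iff $2p<3$, i.e.\ $p<\frac{3}{2}$; hence $\partial_{y_i}\Phi_3(0,\cdot,\kappa)\in L^p_{\rm loc}(\mathbb R^3)$ for $p\in(1,\frac{3}{2})$. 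As in the two-dimensional case, the pointwise gradient is locally integrable, so it coincides with the distributional gradient, and therefore $\Phi_3(0,\cdot,\kappa)\in H^{1,p}_{\rm loc}(\mathbb R^3)$ for $p\in(1,\frac{3}{2})$.

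The computations here are entirely routine; the only point requiring a little care is the identification of the classical pointwise derivative with the distributional derivative across the singularity at $y=x$. This is the step I expect to state explicitly: since the fundamental solution is locally integrable and its pointwise gradient (defined off the diagonal) is also locally integrable in the relevant $L^p$, one excises a small ball $B_\delta(x)$, integrates by parts, and checks that the boundary term over $\partial B_\delta(x)$ tends to zero as $\delta\to0$ — for $\Phi_2$ the boundary integrand is $O(\delta^{-1}\ln\delta)\cdot\delta$ which vanishes, and for $\Phi_3$ it is $O(\delta^{-1})\cdot\delta^2$ which also vanishes — so no distributional contribution is concentrated on the diagonal. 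Everything else follows from the power-counting criterion for $|y|^{-a}\in L^r(B_R)$ in $\mathbb R^d$.
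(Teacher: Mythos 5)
Your proposal is correct and follows essentially the same route as the paper: reduce to the leading singular terms from the expansions \eqref{r10}--\eqref{r13} and verify their local $L^2$ and $L^p$ integrability by power counting in polar coordinates. The only addition is your explicit justification that the pointwise gradient coincides with the distributional one across the singularity, a point the paper leaves implicit; this is a welcome refinement but not a different argument.
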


\begin{proof}
For any fixed $x\in\mathbb R^d$, let $V\subset\mathbb R^d$ be a bounded domain
containing $x$. Denote $\rho:=\sup\limits_{y\in V}|x-y|$, then we have
$V\subset B_{\rho}(x)$. 

For $d=2$,  by (\ref{r10}) and (\ref{r11}), it suffices to show that 
\begin{eqnarray*}
\ln\frac{|x-y|}{2}\in L^2(V),\quad \frac{y_i-x_i}{|x-y|^2}\in
L^p(V), \quad \forall\, p\in(1,2).
\end{eqnarray*}
A direct calculation yields 
\begin{eqnarray*}
\int_V\left|\ln\frac{|x-y|}{2}\right|^2{\rm d}y\leq\int_{B_{\rho}(x)}
\left|\ln\frac {|x-y|}{2}\right|^2{\rm
d}y\lesssim\int_0^{\rho}r\left|\ln\frac{r}{2} \right|^2{\rm d}r<\infty
\end{eqnarray*}
and
\begin{eqnarray*}
\int_V\left|\frac{y_i-x_i}{|x-y|^2}\right|^p{\rm
d}y\leq\int_{B_{\rho}(x)}\frac{1} {
|x-y|^p}{\rm d}y\lesssim\int_0^{\rho}r^{1-p}{\rm d}r<\infty,\quad\forall\, p\in
(1,2).
\end{eqnarray*}
Hereafter, the notation $a\lesssim b$ means $a\leq Cb$, where $C>0$ is a generic
constant which may change step by step in the proofs.
Thus, we conclude that $\Phi_2(x,\cdot,\kappa)\in L^2_{\rm loc}(\mathbb R^2)\cap
H^{1,p}_{\rm loc}(\mathbb R^2)$ for any $p\in (1,2)$. 

For $d=3$, from (\ref{r12}) and (\ref{r13}), it suffices
to prove that 
\begin{eqnarray*}
\frac{e^{{\rm i}\kappa |x-y|}}{|x-y|}\in L^2(V),\quad e^{{\rm
i}\kappa |x-y|}\frac{y_i-x_i}{|x-y|^3}\in L^p(V)\quad\forall\, p\in
(1,\frac{3}{2}).
\end{eqnarray*}
Similarly, we may have from a simple calculation that 
\begin{eqnarray*}
\int_V\left|\frac{e^{{\rm i}\kappa
|x-y|}}{|x-y|}\right|^2{\rm d}y\leq\int_{B_{\rho}(x)}\frac{1}{|x-y|^2}
{\rm d}y\lesssim\int_0^{\rho}1 {\rm d}r<\infty
\end{eqnarray*}
and
\begin{eqnarray*}
\int_V\left|e^{{\rm i}\kappa
|x-y|}\frac{y_i-x_i}{|x-y|^3}\right|^p{\rm d}y\leq\int_{B_{\rho}(x)}\frac{1}{
|x-y|^{2p}}{\rm d}y\lesssim\int_0^{\rho}r^{2-2p}{\rm
d}r<\infty\quad\forall\,p\in (1,\frac{3}{2}),
\end{eqnarray*}
which show that $\Phi_3(x,\cdot,\kappa)\in L^2_{\rm loc}(\mathbb
R^3)\cap H^{1,p}_{\rm loc}(\mathbb R^3)$ for any $p\in(1,\frac{3}{2})$.
\end{proof}

 Let $V$ and $G$ be any two bounded domains in $\mathbb R^d$. By Lemma
\ref{lemma_z} and the Sobolev embedding theorem, we obtain that
$\Phi_d(x,\cdot,\kappa)\in H^s(V)$ where $s\in (0,1)$ for $d=2$ and $s\in
(0,\frac{1}{2})$ for $d=3$. Hence, given $g\in H_0^{-s}(V)$, we can define the
operator $H_{\kappa}$ in the dual sense by
\begin{eqnarray*}
H_{\kappa}g(x)=\int_{V}\Phi_d(x,y,\kappa)g(y){\rm d}y,\quad x\in G.
\end{eqnarray*}

Following the similar arguments in \cite[Theorem 8.2]{CK}, we
may show the following regularity of the operator $H_\kappa$. The proof is
omitted here for brevity. 

\begin{lemma}\label{lemma_v}
The operator $H_{\kappa}:H_0^{-s}(V)\rightarrow H^{s}(G)$ is bounded for $s\in
(0,1)$ in two dimensions or for $s\in (0,\frac{1}{2})$ in three dimensions. 
\end{lemma}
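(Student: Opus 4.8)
The plan is to mimic the classical volume-potential mapping estimates (as in \cite[Theorem 8.2]{CK}), but carried out in the fractional Sobolev scale and dualized. First I would recall from Lemma \ref{lemma_z} together with the Sobolev embedding theorem that, for every fixed $x\in\mathbb R^d$, the function $\Phi_d(x,\cdot,\kappa)$ lies in $H^s(V)$ with $s\in(0,1)$ when $d=2$ and $s\in(0,\tfrac12)$ when $d=3$; moreover this holds locally uniformly in $x$, so that $\sup_{x\in \overline G}\|\Phi_d(x,\cdot,\kappa)\|_{H^s(V)}<\infty$ for a bounded domain $G$. Consequently, for $g\in H_0^{-s}(V)=(H^s(V))'$, the pairing $H_\kappa g(x)=\langle \Phi_d(x,\cdot,\kappa),g\rangle$ is well defined pointwise and bounded: $|H_\kappa g(x)|\le \|\Phi_d(x,\cdot,\kappa)\|_{H^s(V)}\,\|g\|_{H_0^{-s}(V)}$, which already gives $H_\kappa g\in L^\infty(G)$ with the right bound.

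Next I would upgrade this to an $H^s(G)$ estimate. The natural route is to control a Hölder or $H^1$-type seminorm of $x\mapsto \Phi_d(x,\cdot,\kappa)$ as a map into $H^s(V)$, and then interpolate. Concretely, I would show that $x\mapsto \Phi_d(x,\cdot,\kappa)$ is (locally) Lipschitz, or at least Hölder of some order, as an $H^s(V)$-valued map: estimating $\|\Phi_d(x,\cdot,\kappa)-\Phi_d(x',\cdot,\kappa)\|_{H^s(V)}$ using the explicit asymptotics \eqref{r10}--\eqref{r13} and the fact that the singularities of $\Phi_d$ and its first $y$-derivatives are integrable in the relevant $L^p$ spaces (again Lemma \ref{lemma_z}). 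Once one has, say, $\|\Phi_d(x,\cdot,\kappa)-\Phi_d(x',\cdot,\kappa)\|_{H^s(V)}\lesssim |x-x'|^\theta$ for some $\theta>0$ with $\theta$ exceeding the target smoothness, the bound $|H_\kappa g(x)-H_\kappa g(x')|\lesssim |x-x'|^\theta\|g\|_{H_0^{-s}(V)}$ follows, i.e. $H_\kappa g\in C^\theta(\overline G)$, and then the embedding $C^\theta(\overline G)\hookrightarrow H^s(G)$ (for $G$ bounded Lipschitz and $s<\theta$, after possibly shrinking) closes the argument. An alternative, perhaps cleaner, version is to differentiate under the integral/pairing: $\partial_{x_j}H_\kappa g(x)=\langle \partial_{x_j}\Phi_d(x,\cdot,\kappa),g\rangle$, bound this in $L^p(G)$ using the $L^p$-integrability of $\partial_{x_j}\Phi_d$ from Lemma \ref{lemma_z}, obtain $H_\kappa g\in H^{1,p}(G)$, and then use Sobolev embedding $H^{1,p}(G)\hookrightarrow H^{s}(G)$ for the admissible $(s,p)$.

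The main obstacle I anticipate is the bookkeeping in the fractional range: one must track exactly which Hölder/Lebesgue exponent the kernel bounds in Lemma \ref{lemma_z} yield for $\Phi_d(x,\cdot,\kappa)$ as an $H^s(V)$-valued function of $x$, and verify that this exponent genuinely dominates the target $s$ (strictly less than $1$ in 2D, strictly less than $\tfrac12$ in 3D) so that the embedding into $H^s(G)$ is legitimate; the $d=3$ case is the tighter one because of the stronger $|x-y|^{-1}$ singularity and the constraint $p<\tfrac32$. A secondary technical point is that all constants must be uniform in $x\in\overline G$, which requires the domains $V$ and $G$ to be bounded (so that $\rho=\sup_{x\in\overline G,\,y\in V}|x-y|<\infty$ as in the proof of Lemma \ref{lemma_z}); this is exactly where the boundedness hypothesis on $V,G$ is used. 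Modulo these verifications, which are routine given \cite[Theorem 8.2]{CK}, the boundedness of $H_\kappa:H_0^{-s}(V)\to H^s(G)$ follows, and this is why the authors omit the details.
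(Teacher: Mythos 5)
The paper itself gives no proof of this lemma---it only points to \cite[Theorem 8.2]{CK}---so your proposal must be judged on its own merits, and there it has a genuine gap: neither of your two routes for upgrading the pointwise bound to an $H^{s}(G)$ bound reaches the stated range of $s$. Your second route is not even well defined: to write $\partial_{x_j}H_\kappa g(x)=\langle \partial_{x_j}\Phi_d(x,\cdot,\kappa),g\rangle$ for a general $g\in H_0^{-s}(V)=(H^{s}(V))'$ you would need $\partial_{x_j}\Phi_d(x,\cdot,\kappa)\in H^{s}(V)$, but Lemma \ref{lemma_z} only places this derivative in $L^p(V)$ with $p<2$ (resp.\ $p<\tfrac32$), which embeds into Sobolev spaces of \emph{negative} order, not into $H^{s}(V)$ with $s>0$; the $L^p$-integrability of the kernel's gradient lets you handle $g\in L^{p'}$ but not distributional $g$. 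Your first route is well defined but quantitatively too lossy: since one $x$-derivative of $\Phi_d(x,\cdot,\kappa)$ costs a full order of Sobolev smoothness in $y$ (the gradient behaves like $|x-y|^{-1}$ in 2D, $|x-y|^{-2}$ in 3D), the best H\"older modulus available---interpolating between the uniform bound in $H^{s'}(V)$, $s'<1$ in 2D and $s'<\tfrac12$ in 3D, and the Lipschitz bound into $H^{s'-1}(V)$---is $\|\Phi_d(x,\cdot,\kappa)-\Phi_d(x',\cdot,\kappa)\|_{H^{s}(V)}\lesssim|x-x'|^{s'-s}$, i.e.\ $\theta\approx 1-s$ in 2D and $\theta\approx\tfrac12-s$ in 3D. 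The embedding $C^{\theta}(\overline G)\hookrightarrow H^{s}(G)$ requires $s<\theta$, so this argument only yields $s<\tfrac12$ in 2D and $s<\tfrac14$ in 3D---half of what the lemma claims, and not enough for the subsequent well-posedness theorem, which uses $s$ up to $1-\tfrac d6$.

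The missing ingredient is the two-derivative gain of the volume potential, which cannot be recovered from the kernel-integrability facts of Lemma \ref{lemma_z} alone. The argument intended by the citation of \cite[Theorem 8.2]{CK} is presumably: $H_\kappa:L^2(V)\to H^2(G)$ is bounded (classical volume-potential regularity); since $\Phi_d(x,y,\kappa)=\Phi_d(y,x,\kappa)$, transposition gives $H_\kappa:H_0^{-2}(V)\to L^2(G)$; complex interpolation then yields $H_\kappa:H_0^{-s}(V)\to H^{2-s}(G)\subset H^{s}(G)$ for $s\le 1$, which covers both stated ranges. Your pointwise dual-pairing bound is a fine way to see that $H_\kappa g$ is defined on $H_0^{-s}(V)$, but the passage to $H^{s}(G)$ should go through this duality--interpolation scheme rather than through H\"older continuity of the kernel in $x$.
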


\begin{theorem}
For some fixed $s\in (0, 1-\frac{d}{6})$, assume $1<p<\frac{2d}{d+2(1-s)}$ and
$\frac{1}{p}+\frac{1}{p'}=1$, then the scattering problem (\ref{r1})--(\ref{r2})
with the source $f\in H_0^{-1,p'}(D)$ attains a unique solution $u\in
H^{1,p}_{\rm loc}(\mathbb R^d)$, which can be represented by
\begin{eqnarray}\label{r14}
u(x,\kappa)=-\int_{D}\Phi_d(x,y,\kappa)f(y){\rm d}y.
\end{eqnarray}
\end{theorem}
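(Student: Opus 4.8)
The plan is to treat the source $f\in H_0^{-1,p'}(D)$ as a fixed deterministic distribution and to verify that the integral formula \eqref{r14} indeed defines an $H^{1,p}_{\mathrm{loc}}$ solution, then argue uniqueness via the Sommerfeld condition. First I would fix a bounded domain $G\subset\mathbb R^d$ and observe that the candidate solution $u=-H_\kappa f$ makes sense: by hypothesis $s\in(0,1-\frac{d}{6})$, so in particular $s\in(0,1)$ for $d=2$ and $s\in(0,\frac12)$ for $d=3$, and hence Lemma \ref{lemma_v} applies with $V=D$. But Lemma \ref{lemma_v} takes its input in $H_0^{-s}(D)=H_0^{-s,2}(D)$, whereas $f$ lives in $H_0^{-1,p'}(D)$; so the first real step is a Sobolev embedding reconciling these. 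Using the embedding $H^{1,p}(D)\hookrightarrow H^{s,2}(D)$, which holds precisely when $1-\frac{d}{p}\ge s-\frac{d}{2}$, i.e. $p\le \frac{2d}{d+2(1-s)}$ — exactly the stated range for $p$ — and dualizing, one gets $H_0^{-s,2}(D)\hookrightarrow H_0^{-1,p'}(D)$; wait, the inclusion I actually need runs the other way, $H_0^{-1,p'}(D)\hookrightarrow H_0^{-s,2}(D)$, which is the dual of $H^{s,2}(D)\hookrightarrow H^{1,p}(D)$. Since $p<2$ and $s<1$ this is the correct direction of embedding, and the index condition $p<\frac{2d}{d+2(1-s)}$ is exactly what makes it continuous. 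Thus $f\in H_0^{-s,2}(D)=H_0^{-s}(D)$ and Lemma \ref{lemma_v} gives $H_\kappa f\in H^s(G)$ for every bounded $G$.

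Next I would upgrade the regularity to $H^{1,p}_{\mathrm{loc}}$. The point is that differentiating under the integral sign in \eqref{r14} replaces $\Phi_d(x,\cdot,\kappa)$ by $\partial_{x_i}\Phi_d(x,\cdot,\kappa)$, and by Lemma \ref{lemma_z} the kernel $\Phi_d(x,\cdot,\kappa)$ and its $y$-derivatives lie in $L^2_{\mathrm{loc}}\cap H^{1,p}_{\mathrm{loc}}$ in the relevant range of $p$; pairing the kernel (which one checks lies in $H^{s}$ jointly, uniformly for $x$ in a compact set) against $f\in H_0^{-s}(D)$ and invoking the mapping property of $H_\kappa$ a second time for the derivative kernel yields $\nabla u\in L^p_{\mathrm{loc}}(\mathbb R^d)$, hence $u\in H^{1,p}_{\mathrm{loc}}(\mathbb R^d)$. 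That $u$ solves $\Delta u+\kappa^2 u=f$ in the distributional sense follows from $(\Delta_x+\kappa^2)\Phi_d(x,y,\kappa)=-\delta(x-y)$ by testing against $\varphi\in C_0^\infty(\mathbb R^d)$ and using Fubini (justified because $\Phi_d(\cdot,\cdot,\kappa)\varphi$ is integrable by Lemma \ref{lemma_z}); and the Sommerfeld condition \eqref{r2} holds because it holds for $\Phi_d(x,y,\kappa)$ uniformly for $y$ in the compact set $\overline D$ and differentiating/passing the limit through the (now finite-range) integral is legitimate.

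For uniqueness, the difference $w$ of two solutions satisfies $\Delta w+\kappa^2 w=0$ in $\mathbb R^d$ in the distributional sense with $w\in H^{1,p}_{\mathrm{loc}}$; by interior elliptic regularity for the Helmholtz operator $w$ is smooth, and together with the Sommerfeld radiation condition \eqref{r2} the classical Rellich lemma forces $w\equiv 0$. I expect the main obstacle to be the bookkeeping in the first two steps: pinning down the exact Sobolev exponents so that (i) $f$ embeds into the space $H_0^{-s}(D)$ on which Lemma \ref{lemma_v} acts and (ii) the derivative kernel from Lemma \ref{lemma_z} still pairs against $H_0^{-s}(D)$, all under the single constraint $p<\frac{2d}{d+2(1-s)}$ with $s\in(0,1-\frac d6)$ — the role of the seemingly odd bound $1-\frac d6$ is presumably to guarantee that the derivative kernel's integrability exponent from Lemma \ref{lemma_z} ($p<2$ for $d=2$, $p<\frac32$ for $d=3$) is compatible with the dual exponent forced by the embedding, and verifying that compatibility is the delicate part. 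Once the functional-analytic setup is correct, the PDE identities and the Rellich argument are routine.
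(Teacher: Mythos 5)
Your overall architecture matches the paper's: uniqueness from the radiation condition, the volume--potential representation \eqref{r14}, and the $H^{1,p}_{\rm loc}$ regularity obtained from the mapping property of $H_\kappa$ (Lemma \ref{lemma_v}) combined with Sobolev embeddings; your reading of the constraint $s<1-\frac d6$ is also essentially correct, since it is equivalent to $\frac{2d}{d+2(1-s)}<\frac32$, which keeps $p$ inside the range where Lemma \ref{lemma_z} applies for both $d=2$ and $d=3$. The one structural difference on the representation side is harmless: the paper derives \eqref{r14} by applying Green's second identity to a presumed solution $u$ on $B_r\setminus B_\delta(x)$, sending $\delta\to0$, then $r\to\infty$, and using the radiation condition of both $u$ and $\Phi_d$ to remove the far boundary term, whereas you define $u:=-H_\kappa f$ and verify the equation by testing; either direction is acceptable.

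The genuine gap is in your regularity upgrade. ``Differentiating under the integral sign and invoking the mapping property of $H_\kappa$ a second time for the derivative kernel'' is not available: Lemma \ref{lemma_v} concerns the operator with kernel $\Phi_d$, and the operator with kernel $\partial_{x_i}\Phi_d$ loses one derivative, mapping $H_0^{-s}(D)$ into $H^{s-1}_{\rm loc}$ at best. Worse, for $x$ in a neighborhood of $\overline D$ --- exactly where the claim $u\in H^{1,p}_{\rm loc}(\mathbb R^d)$ has content --- the pointwise pairing $\int_D\partial_{x_i}\Phi_d(x,y)f(y)\,{\rm d}y$ against $f\in H_0^{-1,p'}(D)=(H^{1,p}(D))'$ is not even defined, because near its singularity $\partial_{x_i}\Phi_d(x,\cdot)$ lies only in $L^p$ (its gradient behaves like $|x-y|^{-d}$, and in two dimensions it is not even in $L^2$), hence belongs neither to $H^{1,p}(D)$ nor to $H^{s,2}(D)$ for $s\geq 0$. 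The paper closes this step differently, by chaining $H_0^{-1,p'}(D)\hookrightarrow H_0^{-s}(D)$, Lemma \ref{lemma_v}, and $H^{s}(G)\hookrightarrow H^{1,p}(G)$. You should be aware, however, that the embedding $H^{s,2}\hookrightarrow H^{1,p}$ with $s<1$ --- which you and the paper both invoke, in primal or dual form --- is not an instance of the Sobolev embedding theorem (the smoothness indices run the wrong way; a characteristic function lies in $H^{s,2}$ for every $s<\frac12$ but in no $H^{1,p}$), so you correctly identified the delicate point but did not resolve it. A robust way to finish is not a kernel computation but an interior elliptic-regularity bootstrap applied to $\Delta u=f-\kappa^2u$ once $u\in H^{s}_{\rm loc}$ is known.
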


\begin{proof}
It is clear that  the scattering problem (\ref{r1})--(\ref{r2}) with $f=0$ only
has the zero solution. Hence the uniqueness follows. Now we focus on the
existence. Since $s\in
(0, 1-\frac{d}{6})$, a simple calculation shows that
$1<\frac{2d}{d+2(1-s)}<\frac{3}{2}$. By Lemma \ref{lemma_z},
we obtain that $\Phi_d(x,\cdot,\kappa)\in H_{\rm loc}^{1, p}(\mathbb R^d)$.
Since $\Delta u+\kappa^2 u=f\in H_0^{-1,p'}(D)$, we have in the sense
of distribution that 
 \begin{eqnarray}\label{r15}
 \int_{B_r}(\Delta u(y)+\kappa^2
u(y))\Phi_d(x,y,\kappa){\rm d}y=\int_{B_r}\Phi_d(x,y,\kappa)f(y){\rm d}y.
 \end{eqnarray}
Here $B_r=\{y\in \mathbb R^d: |y|\leq r\}$, where $r>0$ is sufficiently large
such that $\overline D\subset B_r$. Denote by $S_{\rm A}$ the operator which
acts on $u$ on the left-hand side of (\ref{r15}).  For $\varphi\in
C^{\infty}(\mathbb R^d)$, we have
 \begin{eqnarray*}
&& (S_{\rm A}\varphi)(x):= \int_{B_r}(\Delta \varphi(y)+\kappa^2
\varphi(y))\Phi_d(x,y,\kappa){\rm d}y\\
 &&= \int_{B_r\setminus B_{\delta}(x)}(\Delta \varphi(y)+\kappa^2
\varphi(y))\Phi_d(x,y,\kappa){\rm d}y+ \int_{B_{\delta}(x)}(\Delta
\varphi(y)+\kappa^2 \varphi(y))\Phi_d(x,y,\kappa){\rm d}y\\
 &&= \int_{B_r\setminus B_{\delta}(x)}(\Delta
\varphi(y)\Phi_d(x,y,\kappa)-\varphi(y)\Delta\Phi_d(x,y,\kappa)){\rm d}y+
\int_{B_{\delta}(x)}(\Delta \varphi(y)+\kappa^2
\varphi(y))\Phi_d(x,y,\kappa){\rm d}y\\
 &&=\int_{B_{\delta}(x)}(\Delta \varphi(y)+\kappa^2
\varphi(y))\Phi_d(x,y,\kappa){\rm d}y+\int_{\partial B_r}\left(\frac{\partial
\varphi(y)}{\partial \nu(y)}\Phi_d(x,y,\kappa)-\varphi(y)\frac{\partial
\Phi_d(x,y,\kappa)}{\partial \nu(y)}\right){\rm d}s(y)\\
 &&\quad+\int_{\partial B_{\delta}(x)}\left(\frac{\partial \varphi(y)}{\partial
\nu(y)}\Phi_d(x,y,\kappa)-\varphi(y)\frac{\partial \Phi_d(x,y,\kappa)}{\partial
\nu(y)}\right){\rm d}s(y),
\end{eqnarray*}
where $\delta>0$ is a sufficiently small number, and $\nu(y)$ denotes the unit
normal which directs to the exterior of $B_r$ for $y\in \partial B_r$ and
directs to the interior of $B_{\delta}(x)$ for $y\in\partial B_{\delta}(x)$.
Using the mean value theorem, we get
 \begin{eqnarray*}
 \lim_{\delta\rightarrow 0}\int_{\partial B_{\delta}(x)}\left(\frac{\partial
\varphi(y)}{\partial \nu(y)}\Phi_d(x,y,\kappa)-\varphi(y)\frac{\partial
\Phi_d(x,y,\kappa)}{\partial \nu(y)}\right){\rm d}s(y)=-\varphi(x)
 \end{eqnarray*}
 and 
 \begin{eqnarray*}
 \lim_{\delta\rightarrow 0}\int_{B_{\delta}(x)}(\Delta \varphi(y)+\kappa^2
\varphi(y))\Phi_d(x,y,\kappa){\rm d}y=0.
 \end{eqnarray*}
Combining the above equations gives that 
 \begin{eqnarray*}
 (S_{\rm A}\varphi)(x)=-\varphi(x)+\int_{\partial B_r}\left(\frac{\partial
\varphi(y)}{\partial \nu(y)}\Phi_d(x,y,\kappa)-\varphi(y)\frac{\partial
\Phi_d(x,y,\kappa)}{\partial \nu(y)}\right){\rm d}s(y),
 \end{eqnarray*}
which implies
  \begin{eqnarray*}
 (S_{\rm A} u)(x)=-u(x)+\int_{\partial B_r}\left(\frac{\partial u(y)}{\partial
\nu(y)}\Phi_d(x,y,\kappa)-u(y)\frac{\partial \Phi_d(x,y,\kappa)}{\partial
\nu(y)}\right){\rm d}s(y).
 \end{eqnarray*}
Since both $u$ and $\Phi_d$ satisfy the Sommerfeld radiation
condition, we have 
\begin{eqnarray*}
\lim_{r\rightarrow\infty}\int_{\partial B_r}\left(\frac{\partial u(y)}{\partial
\nu(y)}\Phi_d(x,y,\kappa)-u(y)\frac{\partial \Phi_d(x,y,\kappa)}{\partial
\nu(y)}\right){\rm d}s(y)=0.
\end{eqnarray*}
Therefore
\begin{eqnarray*}
u(x,\kappa)=-\int_{D}\Phi_d(x,y,\kappa)f(y){\rm d}y=-H_{\kappa}f(x).
\end{eqnarray*}

Next is to show that $u\in H^{1,p}_{\rm loc}(\mathbb
R^d)$. From Lemma \ref{lemma_v}, we have that the operator
$H_{\kappa}:H_0^{-s}(D)\rightarrow H_{\rm loc}^{s}(\mathbb R^d)$ for
$s\in(0,1-\frac{d}{6})$ is bounded. The assumption $1<p<\frac{2d}{d+2(1-s)}$
implies that $\frac{1}{2}+\frac{1-s}{d}<\frac{1}{p}<1$ which yields
$\frac{1}{2}-\frac{s}{d}<\frac{1}{p}-\frac{1}{d}$. Thus, the Sobolev embedding
theorem implies that $H^{s}(D)$ is embedded into $H^{1, p}(D)$ and $H_0^{-1,
p'}(D)$ is embedded into $H_0^{-s}(D)$. Thus, the operator $H_{\kappa}:H_0^{-1,
p'}(D)\rightarrow H^{1,p}_{\rm loc}(\mathbb R^d)$ is bounded, which completes
the proof.
\end{proof}

\subsection{The two-dimensional case}

First we discuss the two-dimensional case and show that the function $\phi$ in
the principle symbol can be uniquely determined by the scattered field obtained
from a single realization of the random source $f$. Let us begin with the
asymptotic of the Hankel function $H_n^{(1)}$ with a large argument. By
\cite[Eqs. (9.2.7)--(9.2.10)]{AS} and \cite[Eqs.(5.11.4)] {NN}, we have:
\begin{eqnarray}\label{b5}
H_n^{(1)}(z)=\sqrt{\frac{1}{ z}}e^{{\rm i}(
z-(\frac{n}{2}+\frac{1}{4})\pi)}\left(\sum_{j=0}^{N}a_j^{(n)}z^{-j}+O(|z|^{-N-1})\right),\quad 
{\rm for}\; |{\rm arg}\; z|\leq \pi-\delta,
\end{eqnarray}
for large $|z|$, where $\delta$ is an arbitrarily small positive number and the
coefficients $a_j^{(n)}=(-2{\rm i})^j\sqrt{\frac{2}{\pi}}(n,j)$ with
\[
\quad (n,j)=\frac{(4n^2-1)(4n^2-3^2)\cdots(4n^2-(2j-1)^2)}{2^{2j}j!}\quad{\rm and}\quad(n,0)=1.
\]
Using the first $N$ terms in the asymptotic of $H_n^{(1)}(\kappa |z|)$, we
define 
\begin{eqnarray}\label{b7}
H_{n,N}^{(1)}(\kappa |z|):=\sqrt{\frac{1}{\kappa |z|}}e^{{\rm i}(\kappa
|z|-(\frac{n}{2}+\frac{1}{4})\pi)}\sum_{j=0}^N a_j^{(n)}\left(\frac{1}{\kappa
|z|}\right)^j.
\end{eqnarray}
It is easy to show from (\ref{b5}) that 
\begin{eqnarray}\label{b8}
\big|H_n^{(1)}(\kappa |z|)-H_{n,N}^{(1)}(\kappa
|z|)\big|\leq c\bigg(\frac{1}{\kappa |z|}\bigg)^{N+\frac{3}{2}}.
\end{eqnarray}
Using (\ref{b7}), we define $\tilde{u}(x,\kappa)$ as 
\begin{eqnarray}\label{b9}
\tilde{u}(x,\kappa):=-\frac{{\rm i}}{4}\int_{\mathbb R^2}H^{(1)}_{0,2}(\kappa
|x-y|)f(y){\rm d}y.
\end{eqnarray}

\begin{lemma}\label{lemma1b}
The random variable $u(x,\kappa)-\tilde{u}(x,\kappa)$ satisfies almost surely
the condition 
\begin{eqnarray*}
|u(x,\kappa)-\tilde{u}(x,\kappa)|\leq c\kappa^{-\frac{7}{2}}, \quad x\in U,
\end{eqnarray*}
where the constant $c$ depends only on $L^2(D)$-norm of $f$.
\end{lemma}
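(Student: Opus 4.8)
The plan is to subtract the two integral representations and bound the resulting kernel by the uniform remainder estimate (\ref{b8}). Using (\ref{r14}) together with $\Phi_2(x,y,\kappa)=\frac{\rm i}{4}H_0^{(1)}(\kappa|x-y|)$, the definition (\ref{b9}) of $\tilde u$, and the fact that $f$ is supported in $D$, one writes
\[
u(x,\kappa)-\tilde u(x,\kappa)=-\frac{\rm i}{4}\int_D\bigl(H_0^{(1)}(\kappa|x-y|)-H_{0,2}^{(1)}(\kappa|x-y|)\bigr)f(y)\,{\rm d}y.
\]
Since $D$ and $U$ have positive distance, say $d_0:={\rm dist}(\overline D,\overline U)>0$, we have $|x-y|\ge d_0$ for all $x\in U$ and $y\in D$; in particular the kernel $y\mapsto H_0^{(1)}(\kappa|x-y|)-H_{0,2}^{(1)}(\kappa|x-y|)$ is smooth (indeed real-analytic) on $\overline D$, and the integral above is well defined because, by Lemma \ref{lemma1a}, the realization of $f$ lies almost surely in $L^2(D)$ (it is even Hölder continuous when $m>d$; in the borderline case $m=d$ one reads the integral as the $H^{-\varepsilon,p}(D)$--$H^{\varepsilon,p'}(D)$ duality pairing against the smooth kernel, and the argument below is unchanged with $\|f\|_{L^2(D)}$ replaced by $\|f\|_{H^{-\varepsilon,p}(D)}$).

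Next I apply (\ref{b8}) with $n=0$ and $N=2$: for all $x\in U$ and $y\in D$, and for $\kappa$ bounded below so that $\kappa|x-y|\ge\kappa d_0$ is bounded away from zero,
\[
\bigl|H_0^{(1)}(\kappa|x-y|)-H_{0,2}^{(1)}(\kappa|x-y|)\bigr|\le c\,(\kappa|x-y|)^{-7/2}\le c\,d_0^{-7/2}\,\kappa^{-7/2}.
\]
Substituting this into the previous display and estimating by the Cauchy--Schwarz inequality over the bounded set $D$ gives
\[
|u(x,\kappa)-\tilde u(x,\kappa)|\le\frac{c}{4}\,d_0^{-7/2}\,\kappa^{-7/2}\int_D|f(y)|\,{\rm d}y\le\frac{c}{4}\,d_0^{-7/2}\,|D|^{1/2}\,\|f\|_{L^2(D)}\,\kappa^{-7/2},
\]
which is the claimed estimate with a constant depending only on $d_0$, $|D|$, and $\|f\|_{L^2(D)}$; the latter is almost surely finite by Lemma \ref{lemma1a}.

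The estimate reduces to a one-line computation once (\ref{b8}) is in hand, so I do not foresee a genuine obstacle. The point that needs care is the use of the positive-distance hypothesis: it is precisely what keeps the singularities of both $H_0^{(1)}(\kappa|x-y|)$ and its truncation $H_{0,2}^{(1)}(\kappa|x-y|)$ away from the support of $f$, turning the kernel into a bounded, smooth function of $y\in D$ and making the pairing with the rough field $f$ meaningful; it also guarantees that $\kappa|x-y|$ is large once $\kappa$ is large, which legitimizes the use of the asymptotic bound (\ref{b8}), so the inequality in the lemma is to be understood for all sufficiently large $\kappa$ (as is all that the subsequent frequency-averaging arguments require). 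The almost-sure finiteness of $\|f\|_{L^2(D)}$ (or of $\|f\|_{H^{-\varepsilon,p}(D)}$ when $m=d$) is then immediate from Lemma \ref{lemma1a}.
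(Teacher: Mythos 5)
Your decomposition and the use of the remainder bound (\ref{b8}) are exactly the paper's; the only difference is the final pairing. The paper estimates the integral by the duality $\|H_0^{(1)}(\kappa|x-\cdot|)-H_{0,2}^{(1)}(\kappa|x-\cdot|)\|_{H^{1,p}(D)}\,\|f\|_{H_0^{-1,p'}(D)}$, so its constant depends on the $H_0^{-1,p'}(D)$-norm of $f$ (despite the lemma statement advertising the $L^2(D)$-norm), whereas you use a pointwise kernel bound and Cauchy--Schwarz against $\|f\|_{L^2(D)}$. Your main line is clean and matches the statement as written, but it only applies when $m\in(d,d+\tfrac12)$, since for $m=d$ Lemma \ref{lemma1a} places $f$ only in $H^{-\varepsilon,p}(D)$ and not in $L^2(D)$ almost surely. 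You do flag this and propose the duality fallback, but the claim that ``the argument below is unchanged'' glosses over a real point: a duality pairing against $H^{-\varepsilon,p}(D)$ requires control of the $H^{\varepsilon,p'}(D)$-norm of the kernel, not merely its sup-norm, and differentiating the Hankel remainder in $y$ costs a factor of $\kappa$, so interpolation yields $O(\kappa^{-7/2+\varepsilon})$ rather than $O(\kappa^{-7/2})$ in that case. This is a minor loss that is harmless for every subsequent use of the lemma (only sufficient polynomial decay is needed), and the paper's own proof incurs an analogous loss by invoking the full $H^{1,p}$-norm; still, you should either state the $m=d$ bound with the $\varepsilon$-loss or switch, as the paper does, to the $H^{1,p}$--$H_0^{-1,p'}$ pairing uniformly in $m$.
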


\begin{proof}
Noting Assumption A, we know that there exists a positive constant $M$
such that $|x-y|\geq M$ holds for all $x\in U$ and $y\in D$. By (\ref{r14}),
(\ref{b8}), and (\ref{b9}), we have for $x\in U$ that
\begin{align*}
|u(x,\kappa)-\tilde{u}(x,\kappa)|&=\left|\frac{\rm
i}{4}\int_{D}\left[H_0^{(1)}(\kappa |x-y|)-H_{0,2}^{(1)}(\kappa
|x-y|)\right]f(y)dy\right|\\
&\lesssim \|H_0^{(1)}(\kappa
|x-\cdot|)-H_{0,2}^{(1)}(\kappa|x-\cdot|)\|_{H^{1,p}(D)}\|f\|_{H_0^{-1,p'}(D)}\\
&\leq c\kappa^{-\frac{7}{2}},
\end{align*}
where the constant $c$ depends only on $H_0^{-1,p}(D)$-norm of $f$. 
\end{proof}

Now we are in the position to compute the covariance of $\tilde{u}(x,\kappa)$. 
Using (\ref{b7}) and (\ref{b9}), we have from a direct calculation that 
\begin{eqnarray}\label{b10}
\mathbb
E(\tilde{u}(x,\kappa_1)\overline{\tilde{u}(x,\kappa_2)})=\frac{1}{16}\sum_{j_1,
j_2=0}^{2}\frac{a_{j_1}^{(0)}\overline{a_{j_2}^{(0)}}}{\kappa_1^{j_1+\frac{1}{2}
}\kappa_2^{j_2+\frac{1}{2}}}\int_{\mathbb R^4}\frac{e^{{\rm
i}(\kappa_1|x-y|-\kappa_2|x-z|)}}{|x-y|^{j_1+\frac{1}{2}}|x-z|^{j_2+\frac{1}{2}}
}\mathbb E(f(y)f(z)){\rm d}y{\rm d}z.
\end{eqnarray}
From (\ref{b10}), it is easy to see that $\mathbb
E(\tilde{u}(x,\kappa_1)\overline{\tilde{u}(x,\kappa_2)})$ is a linear
combination of the following integral 
\begin{eqnarray}\label{b11}
I(x,\kappa_1,\kappa_2):=\frac{1}{\kappa_1^{l_1}\kappa_2^{l_2}}\int_{\mathbb
R^{2d}}e^{{\rm i}(c_1\kappa_1|x-y|-c_2\kappa_2|x-z|)}K(x,y,z)\mathbb
E(q(y)q(z)){\rm d}y{\rm d}z,
\end{eqnarray}
where
\begin{eqnarray*}\label{b12}
K(x,y,z):=\frac{(x_1-y_1)^{m_1}\cdots(x_d-y_d)^{m_d}(x_1-z_1)^{n_1}
\cdots(x_d-z_d)^{n_d}}{|x-y|^{p_1}|x-z|^{p_2}}.
\end{eqnarray*}
Here $q$ stands for a generalized Gaussian random function satisfying
Assumption B, and $l_1$, $l_2$, $c_1$, $c_2$, $m_1$, ..., $n_d$, $p_1$,
$p_2$ are nonnegative constants.

\begin{lemma}\label{lemma2b}
For $\kappa_1,\kappa_2\geq 1$, the estimates
\begin{align}\label{b13}
|I(x,\kappa_1,\kappa_2)|&\leq
c_n(\kappa_1+\kappa_2)^{-(m+2\min(l_1,l_2))}(1+|\kappa_1-\kappa_2|)^{-n},
\\\label{b14} |\mathbb E(\tilde{u}(x,\kappa_1)\tilde{u}(x,\kappa_2))|&\leq
c_n(\kappa_1+\kappa_2)^{-n}(1+|\kappa_1-\kappa_2|)^{-m}
\end{align}
holds uniformly for $x\in U$, where $n\in\mathbb N$ is arbitrary.
\end{lemma}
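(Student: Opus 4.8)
\textit{Plan.} The approach is to replace the covariance by its symbol and to treat each integral in \eqref{b11} as a Fourier integral operator, in the spirit of the microlocal analysis of \cite{LPS, CHL}. Since $C_q$ is a classical pseudo-differential operator of order $-m$, its Schwartz kernel, which by \eqref{a2} equals $\mathbb E(q(y)q(z))$, admits the representation $\mathbb E(q(y)q(z))=(2\pi)^{-d}\int_{\mathbb R^d}e^{{\rm i}(y-z)\cdot\xi}\sigma(y,\xi)\,{\rm d}\xi$ with a symbol $\sigma$ of order $-m$, equal to $\phi(y)|\xi|^{-m}$ for $|\xi|\geq 1$ modulo a symbol of order $-m-1$, and smooth and compactly supported in $D$ for $|\xi|\leq 1$. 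Substituting this into \eqref{b11} turns $I$ into
\[
I(x,\kappa_1,\kappa_2)=\frac{1}{(2\pi)^d\,\kappa_1^{l_1}\kappa_2^{l_2}}\int_{\mathbb R^{3d}}e^{{\rm i}\Phi}\,K(x,y,z)\,\sigma(y,\xi)\,{\rm d}y\,{\rm d}z\,{\rm d}\xi,
\]
where $\Phi=c_1\kappa_1|x-y|-c_2\kappa_2|x-z|+(y-z)\cdot\xi$. By Assumption~A one has $|x-y|\geq M$ and $|x-z|\geq M$ on the support, so $\Phi$ is smooth there with all derivatives of order $\geq 2$ bounded, $K(x,\cdot,\cdot)$ is smooth with bounded derivatives, and $\sigma$ obeys symbol estimates of order $-m$ in $\xi$; the lower-order and low-frequency parts of $\sigma$ only improve the bound, so it suffices to take $\sigma(y,\xi)=\phi(y)|\xi|^{-m}$ cut off to $|\xi|\geq 1$.

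Since $\nabla_\xi\Phi=y-z$, $\nabla_y\Phi=\xi-c_1\kappa_1\frac{x-y}{|x-y|}$, and $\nabla_z\Phi=c_2\kappa_2\frac{x-z}{|x-z|}-\xi$, the phase $\Phi$ is stationary only where $y=z$ and $|\xi|=c_1\kappa_1=c_2\kappa_2$. I would then fix a partition of unity in $\xi$ into a region where $|\xi|$ stays away from $c_1\kappa_1$, a region where $|\xi|$ stays away from $c_2\kappa_2$, and a region where $|\xi|$ is comparable to both. On the first one $|\nabla_y\Phi|\gtrsim\kappa_1+|\xi|$, so integrating by parts $N$ times in $y$ with the operator $({\rm i}|\nabla_y\Phi|^2)^{-1}\nabla_y\Phi\cdot\nabla_y$ (admissible because the derivatives of $\frac{x-y}{|x-y|}$ and of $\sigma$ are under control) produces a factor $(\kappa_1+|\xi|)^{-N}$; together with $\int|\xi|^{-m}(\kappa_1+|\xi|)^{-N}\,{\rm d}\xi\lesssim\kappa_1^{-N}$ (using $m>d$) and $\kappa_1,\kappa_2\geq 1$, this region contributes $O((\kappa_1+\kappa_2)^{-N})$ for every $N$; the second region is handled symmetrically in $z$.

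On the decisive region one necessarily has $c_1\kappa_1\sim c_2\kappa_2$, hence $\kappa_1\sim\kappa_2\sim\kappa_1+\kappa_2$ and $\kappa_1^{l_1}\kappa_2^{l_2}\gtrsim(\kappa_1+\kappa_2)^{l_1+l_2}\geq(\kappa_1+\kappa_2)^{2\min(l_1,l_2)}$. Here I would rescale $\xi=c_1\kappa_1\eta$ and write $c_2\kappa_2=c_1\kappa_1(1+\mu)$, so that this contribution equals $(c_1\kappa_1)^{d-m}$ times an oscillatory integral with large parameter $\lambda=c_1\kappa_1$ and phase $\Psi_\mu=|x-y|-(1+\mu)|x-z|+(y-z)\cdot\eta$. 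Passing to the variables $(p,w,\eta)=(y,\,y-z,\,\eta)$, for each fixed $p$ the phase in $(w,\eta)$ has the unique critical point $w=0$, $\eta=(1+\mu)\frac{x-p}{|x-p|}$, with nondegenerate Hessian (nonzero determinant independent of $p$ and $\mu$) and critical value $-\mu|x-p|$; the method of stationary phase in $(w,\eta)$ therefore yields $\lambda^{-d}e^{-{\rm i}\lambda\mu|x-p|}\,b(x,p)+O(\lambda^{-d-1})$, uniformly in $p$ and $\mu$. Integrating over $p\in D$ and applying nonstationary phase in $p$ (where $\nabla_p(-\mu|x-p|)$ has length $|\mu|$) gives a bound $\lesssim_n\lambda^{-d}(1+\lambda|\mu|)^{-n}$, and $\lambda|\mu|=|c_1\kappa_1-c_2\kappa_2|$, which equals $|\kappa_1-\kappa_2|$ for the integrals arising from \eqref{b10} (there $c_1=c_2=1$). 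Multiplying by $(c_1\kappa_1)^{d-m}/(\kappa_1^{l_1}\kappa_2^{l_2})$ bounds this contribution by $(\kappa_1+\kappa_2)^{-(m+l_1+l_2)}(1+|\kappa_1-\kappa_2|)^{-n}\leq(\kappa_1+\kappa_2)^{-(m+2\min(l_1,l_2))}(1+|\kappa_1-\kappa_2|)^{-n}$, which together with the first two regions establishes \eqref{b13}.

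For \eqref{b14} the same substitution applied to $\mathbb E(\tilde u(x,\kappa_1)\tilde u(x,\kappa_2))$ --- note the absence of a complex conjugate --- produces integrals with phase $\Phi^{+}=c_1\kappa_1|x-y|+c_2\kappa_2|x-z|+(y-z)\cdot\xi$. A critical point would require $y=z$ and $\xi=c_1\kappa_1\frac{x-y}{|x-y|}=-c_2\kappa_2\frac{x-y}{|x-y|}$, which is impossible; moreover on $\{y=z\}$ the triangle inequality gives $|\nabla_y\Phi^{+}|+|\nabla_z\Phi^{+}|\geq c_1\kappa_1+c_2\kappa_2$, while $|\nabla_\xi\Phi^{+}|=|y-z|>0$ when $y\neq z$. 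Hence, with a partition of unity in $\xi$ subordinate to the regions on which one of $|\nabla_y\Phi^{+}|$, $|\nabla_z\Phi^{+}|$, $|\nabla_\xi\Phi^{+}|$ is bounded below by a fixed multiple of $\kappa_1+\kappa_2$ (or of $|\xi|$ when $|\xi|$ is large), repeated integration by parts shows $|\mathbb E(\tilde u(x,\kappa_1)\tilde u(x,\kappa_2))|\lesssim_N(\kappa_1+\kappa_2)^{-N}$ for every $N$; since $\kappa_1,\kappa_2\geq 1$ give $1+|\kappa_1-\kappa_2|\leq\kappa_1+\kappa_2$, the choice $N=n+m$ yields \eqref{b14}. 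The main obstacle is the decisive region in \eqref{b13}: carrying out the stationary-phase expansion along the nondegenerate critical manifold so as to extract the exact power $(\kappa_1+\kappa_2)^{-m}$, and --- more delicately --- obtaining decay in $|\kappa_1-\kappa_2|$ uniformly as the ratio $c_2\kappa_2/c_1\kappa_1$ crosses $1$, which is precisely where the parameter-dependent form of stationary phase is needed; the remaining steps are routine integration by parts.
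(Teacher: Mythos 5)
Your proposal is essentially correct in outline but follows a genuinely different route from the paper. Both arguments start from the same place --- writing $\mathbb E(q(y)q(z))$ via its symbol $\sigma(y,\xi)\in S^{-m}_{1,0}$ --- but then diverge. The paper never touches the stationary-phase machinery: it introduces the explicit coordinate changes $\eta(y,z,x)=(y-z,y+z,x)$ and $\tau=\tau_2\circ\tau_1$ (polar-type coordinates centred at $x$, followed by sums and differences), under which the phase $c_1\kappa_1|x-y|-c_2\kappa_2|x-z|$ becomes \emph{linear}, $(c_1\kappa_1+c_2\kappa_2)g\cdot e_1+(c_1\kappa_1-c_2\kappa_2)h\cdot e_1$; H\"ormander's calculus for conormal distributions is then used to transport the symbol through these changes of variables, so that $I$ collapses to $\kappa_1^{-l_1}\kappa_2^{-l_2}\int e^{{\rm i}(c_1\kappa_1-c_2\kappa_2)h\cdot e_1}c_4\bigl(h,x,-(c_1\kappa_1+c_2\kappa_2)e_1\bigr)\,{\rm d}h$, after which the symbol estimate gives the factor $(c_1\kappa_1+c_2\kappa_2)^{-m}$ and $n$ integrations by parts in $h_1$ give $(1+|c_1\kappa_1-c_2\kappa_2|)^{-n}$. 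Your approach keeps the original variables and treats the triple oscillatory integral directly, separating a nonstationary regime (handled by integration by parts in $y$ or $z$) from the stationary shell $|\xi|\sim c_1\kappa_1\sim c_2\kappa_2$, where you rescale and apply stationary phase in $(y-z,\eta)$ about the nondegenerate critical point $w=0$, $\eta=(1+\mu)(x-p)/|x-p|$ with critical value $-\mu|x-p|$, and then nonstationary phase in $p$ to extract $(1+|c_1\kappa_1-c_2\kappa_2|)^{-n}$. Your computations of the critical points, the block Hessian (determinant $\pm1$), the critical value, and the power counting $(c_1\kappa_1)^{d-m}\lambda^{-d}$ are all correct, and your bound $(\kappa_1+\kappa_2)^{-(m+l_1+l_2)}$ in the decisive region is in fact slightly sharper than the $(\kappa_1+\kappa_2)^{-(m+2\min(l_1,l_2))}$ the paper obtains from its $\kappa_1\kappa_2\gtrsim(\kappa_1+\kappa_2)^2$ case analysis. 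Your treatment of \eqref{b14} (no critical point; $|\nabla_y\Phi^+|+|\nabla_z\Phi^+|\geq c_1\kappa_1+c_2\kappa_2$ near the diagonal, smoothness of the kernel off the diagonal) matches the paper's observation that the argument for $I$ applies with $\kappa_2$ replaced by $-\kappa_2$, swapping the roles of sum and difference.

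The one place where your write-up is not yet a proof is the point you yourself flag: a one-term expansion $\lambda^{-d}e^{-{\rm i}\lambda\mu|x-p|}b(x,p)+O(\lambda^{-d-1})$ does not suffice, because the $O(\lambda^{-d-1})$ remainder carries no decay in $|\kappa_1-\kappa_2|$, and $(\kappa_1+\kappa_2)^{-(m+1+l_1+l_2)}$ is \emph{not} dominated by $(\kappa_1+\kappa_2)^{-(m+2\min(l_1,l_2))}(1+|\kappa_1-\kappa_2|)^{-n}$ once $n$ exceeds $1+|l_1-l_2|$. You must push the stationary-phase expansion to order $J\gtrsim n$ (each term being again an integral of $e^{-{\rm i}\lambda\mu|x-p|}$ against a smooth amplitude, hence acquiring the $(1+\lambda|\mu|)^{-n}$ factor by nonstationary phase in $p$), and verify that the amplitudes and remainders are uniform in the parameter $\mu$ as $c_2\kappa_2/c_1\kappa_1$ crosses $1$. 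This is standard but must be stated; the paper's linearization of the phase is precisely the device that avoids this issue, since there the $h$-integration by parts and the $\xi$-symbol estimate decouple exactly. A second, minor point: your convergence claim $\int|\xi|^{-m}(\kappa_1+|\xi|)^{-N}{\rm d}\xi\lesssim\kappa_1^{-N}$ invokes $m>d$, but Assumption B allows $m=d$; the integral then picks up a harmless logarithm that is absorbed by enlarging $N$.
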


\begin{proof}
 To estimate the integral $I(x,\kappa_1,\kappa_2)$, we introduce the multiple
coordinate transformation that allows to use the microlocal methods in our
analysis. Noting $\mathbb Eq = 0$ and (\ref{a2}), we conclude that the
correlation function $\mathbb E(q(y)q(z))$ is the Schwartz kernel of a
pseudo-differential operator $C_q$ with a classical symbol $\sigma(y,\xi)\in
S_{1,0}^{-m}(\mathbb R^d\times\mathbb R^d)$ which is defined by  
 \begin{eqnarray*}
 S_{1,0}^{-m}(\mathbb R^d\times\mathbb R^d):=\{a(x,\xi)\in C^{\infty}(\mathbb
R^d\times\mathbb R^d):|\partial_{\xi}^{\alpha}\partial_{x}^{\beta}a(x,\xi)|\leq
C_{\alpha, \beta}(1+|\xi|)^{-m-|\alpha|}\}.
 \end{eqnarray*}
Here $\alpha$, $\beta$ are multiple indices, $|\alpha|$ denotes the sum of its
component. The principle symbol of $C_q$ is $\sigma^p(y,\xi)=\phi(y)|\xi|^{-m}$.
The support of $\mathbb E(q(y)q(z))$ is contained in $D\times D$. We can write
$\mathbb E(q(y)q(z))$ in terms of its symbol by
\begin{eqnarray}\label{b15}
\mathbb E(q(y)q(z))=(2\pi)^{-d}\int_{\mathbb R^d}e^{{\rm i}(y-z)\cdot
\xi}\sigma(y,\xi){\rm d}\xi.
\end{eqnarray}

In order to establish a uniform estimate with respect to the variable $x$, we
extend the covariance function into the space $\mathbb R^{2d}\times \mathbb
R^d$, and define $B_1(y,z,x)=\mathbb E(q(y)q(z))\theta(x)$ where $\theta(x)\in
C_0^{\infty}(\mathbb R^d)$ equals to one in the domain $U$ and has its support
outside of the domain $\overline{D}$. Thus, we have 
\begin{eqnarray*}\label{b16}
B_1(y,z,x)=(2\pi)^{-d}\int_{\mathbb R^d}e^{{\rm i}(y-z)\cdot
\xi}c_1(y,x,\xi){\rm d}\xi,
\end{eqnarray*}
where $c_1(y,x,\xi)=\sigma(y,\xi)\theta(x)\in S_{1,0}^{-m}(\mathbb
R^{2d}\times\mathbb R^d)$ with a principle symbol 
\begin{eqnarray*}\label{b17}
 c_1^p(y,x,\xi)=\phi(y)|\xi|^{-m}\theta(x).
 \end{eqnarray*}

 To proceed the analysis, let us briefly revisit the conormal distributions of
H\"{o}remainder type \cite{L3}. If $X\subset\mathbb R^d$ is an open set and
$S\subset X$ is a smooth submanifold of $X$, we denote by $I(X; S)$ the
distributions in $\mathcal{D'}(X)$ that are smooth in $X\setminus S$ and have a
conormal singularity at $S$. In consequence, by (\ref{b15}), the correlation
function $\mathbb E(q(y)q(z))$ is a conormal distribution in $\mathbb R^{2d}$ of
H\"{o}remainder type having conormal singularity on the surface
$S_1=\{(y,z)\in\mathbb R^{2d}: y-z=0\}$. Moreover,
let $I_{\rm comp}(X;S)$ be the set of distributions supported in a compact
subset of $X$. Let $\bm{D}\subset \mathbb R^{3d}$ be an open set containing
$D\times D\times {\rm supp}(\theta)$ so that $B_1\in I_{\rm comp}({\bf D};
S_1\cap {\bf D})$.

Define the first coordinate transformation $\eta:\mathbb
R^{3d}\rightarrow\mathbb R^{3d}$ by 
\begin{eqnarray}\label{b18}
(v,w,x)=\eta(y,z,x)=(y-z,y+z,x).
\end{eqnarray}
Substituting the coordinate transformation (\ref{b18})  into $B_1(y,z,x)$ gives
\begin{eqnarray*}
B_2(v,w,x)=B_1(\eta^{-1}(v,w,x))=(2\pi)^{-d}\int_{\mathbb R^d} e^{{\rm i}v\cdot
\xi}c_1(\frac{v+w}{2},x,\xi){\rm d}\xi,
\end{eqnarray*}
which means that $B_2\in I(\mathbb R^{3d}, S_2)$ where $S_2:=\{(v,w,x):v=0\}$.
Actually, $B_2\in I_{\rm comp}(X_2, X_2\cap S_2)$ where $X_2:=\eta({\bf D})$. To
find out how the symbol transforms in the change of coordinates, we need to
represent $c_1(\frac{v+w}{2},x,\xi)$ with a symbol that does not depend on $v$.
Using the representation theorem of conormal distribution \cite[Lemma
18.2.1]{L3}), we obtain 
\begin{eqnarray*}
B_2(v,w,x)=(2\pi)^{-d}\int_{\mathbb R^d} e^{{\rm i}v\cdot
\xi}c_2(w,x,\xi){\rm d}\xi,
\end{eqnarray*}
where $c_2(w,x,\xi)$ has the asymptotic expansion
\begin{eqnarray*}
c_2(w,x,\xi)\sim\sum_{l=0}^{\infty}\frac{\langle-{\rm i} D_v,
D_{\xi}\rangle^l}{l!}c_1(\frac{v+w}{2},x,\xi)\bigg|_{v=0}\in
S_{1,0}^{-m}(\mathbb R^{2d}\times\mathbb R^d).
\end{eqnarray*}
 In particular, the principle symbol of $c_2(w,x,\xi)$ is 
 \begin{eqnarray*}
 c_2^p(w,x,\xi)=\phi(\frac{v+w}{2})|\xi|^{-m}\theta(x)\bigg|_{v=0}.
 \end{eqnarray*}

We consider the phase of $I(x,\kappa_1,\kappa_2)$. A simple calculation
shows that 
\begin{align}
c_1\kappa_1|x-y|-c_2\kappa_2|x-z|&=(c_1\kappa_1+c_2\kappa_2)\frac{|x-y|-|x-z|}{
2}\notag\\\label{b19}
&\quad +(c_1\kappa_1-c_2\kappa_2)\frac{|x-y|+|x-z|}{2}.
\end{align}
In the second set of coordinates, let $\frac{|x-y|\pm|x-z|}{2}$ play the role of
two coordinates. We will do this change in two steps. First, for the
two-dimensional case where $d=2$, we define $\tau_1:\mathbb R^{6}\rightarrow
\mathbb R^{6}$ by
\[
\tau_1(y,z,x)=(E_1,E_2,x),
\]
where $E_1=(t_1,s_1)$ and $E_2=(t_2,s_2)$
with
\begin{eqnarray*}
t_1=\frac{1}{2}|x-y|,\quad
s_1=\frac{1}{2}\arcsin\left(\frac{y_1-x_1}{|x-y|}\right),\\
t_2=\frac{1}{2}|x-z|,\quad
s_2=\frac{1}{2}\arcsin\left(\frac{z_1-x_1}{|x-z|}\right).
\end{eqnarray*}
For the three-dimensional case where $d=3$, we define $\tau_1:\mathbb
R^9\rightarrow \mathbb R^9$ by
\[
\tau_1(y,z,x)=(E_1,E_2,x),
\]
where $E_1=(t_1,s_1,r_1)$ and $E_2=(t_2,s_2,r_2)$ with
\begin{eqnarray*}
t_1=\frac{1}{2}|x-y|,\quad
s_1=\frac{1}{2}\arccos\left(\frac{y_3-x_3}{|x-y|}\right),\quad r_1 =
\frac{1}{2}|x-y|\arctan\left(\frac{y_2-x_2}{y_1-x_1}\right),\\
t_2=\frac{1}{2}|x-z|,\quad
s_2=\frac{1}{2}\arccos\left(\frac{z_3-x_3}{|x-z|}\right),\quad r_2 =
\frac{1}{2}|x-z|\arctan\left(\frac{z_2-x_2}{z_1-x_1}\right).
\end{eqnarray*}
Second, we define $\tau_2:\mathbb R^{3d}\rightarrow \mathbb R^{3d}$ by
\[
(g,h,x)=\tau_2(E_1,E_2,x)=(E_1-E_2,E_1+E_2,x). 
\]
Thus, combining the definitions of $\tau_1$, $\tau_2$ and (\ref{b19}), we have
\begin{eqnarray}\label{b20}
c_1\kappa_1|x-y|-c_2\kappa_2|x-z|=(c_1\kappa_1+c_2\kappa_2)g\cdot
e_1+(c_1\kappa_1-c_2\kappa_2)h\cdot e_1,
\end{eqnarray}
where $e_1=(1,0)$ for $d=2$, and $e_1=(1,0,0)$ for $d=3$. Now we denote
$\tau=\tau_2\circ\tau_1:\mathbb R^{3d}\rightarrow\mathbb R^{3d}$ with
$\tau(y,z,x)=(g,h,x)$. We consider the transformation
$\rho=\eta\circ\tau^{-1}:\mathbb R^{3d}\rightarrow\mathbb R^{3d}$ with
$\rho(g,h,x)=(v,w,x)$. Let us decompose the coordinate transform $\rho$ into two
parts $\rho=(\rho_1,\rho_2)$, the $\mathbb R^d$-valued function
$\rho_1(g,h,x)=v$ and the $\mathbb R^{2d}$-valued function
$\rho_2(g,h,x)=(w,x)$. The Jacobian $J_{\rho}$ corresponding to the decomposing
of the variables is given by 
\begin{eqnarray*}
J_{\rho}=\begin{bmatrix} \rho'_{11} & \rho'_{12}\\ \rho'_{21} & \rho'_{22}
\end{bmatrix}
=\begin{bmatrix} J_{g}\rho_1 & J_{(h,x)}\rho_1\\ J_{g}\rho_2 &J_{(h,x)}\rho_2
\end{bmatrix}.
\end{eqnarray*}
By the definition of $\rho$, it is easy to see that $v=0$ if $g=0$. Hence we
have $\rho_1(0,h,x)=0$ which implies $\rho'_{12}(0,h,x)=0$.

Next we consider the pull-back distribution $B_3=B_2\circ\rho$. It follow from 
\cite[Theorem 18.2.9]{L3} that we get a representation for $B_3$:
\begin{eqnarray*}\label{b21}
B_3(g,h,x)=(2\pi)^{-d}\int_{\mathbb R^d}e^{{\rm i}g\cdot
\xi}c_3(h,x,\xi){\rm d}\xi,
\end{eqnarray*}
where $c_3(h,x,\xi)\in S_{1,0}^{-m}(\mathbb R^{2d}\times\mathbb R^d)$ is a
symbol satisfying 
\begin{eqnarray*}
c_3(h,x,\xi)=c_2(\rho_2(g,h,x),((\rho'_{11}(g,h,x))^{-1})^T\xi)\times|{\rm
det}\rho'_{11}(g,h,x)|^{-1}\bigg|_{g=0}+r(h,x,\xi),
\end{eqnarray*}
where $r(h,x,\xi)\in S_{1,0}^{-m-1}(\mathbb R^{2d}\times\mathbb R^d)$. The
principle symbol of $c_3(h,x,\xi)$ is given by
\begin{eqnarray*}
c_3^p(h,x,\xi)=\phi(y(g,h,x))|((\rho'_{11}(g,h,x))^{-1})^T\xi|^{-m}
\theta(x)\times|{\rm det}\rho'_{11}(g,h,x)|^{-1}\big|_{g=0}.
\end{eqnarray*}
Let $X_3:=\tau({\bf D})$ and $S_3:=\{(g,h,x):g=0\}$, we have $B_3\in I_{\rm
comp}(X_3, X_3\cap S_3)$. So we can write $I(x,\kappa_1,\kappa_2)$ in the
following form
\begin{eqnarray*}
I(x,\kappa_1,\kappa_2)= \frac{1}{\kappa_1^{l_1}\kappa_2^{l_2}}\int_{\mathbb
R^{2d}}e^{{\rm i}[(c_1\kappa_1+c_2\kappa_2)g\cdot
e_1+(c_1\kappa_1-c_2\kappa_2)h\cdot e_1]}B_3(g,h,x)H(g,h,x){\rm
d}g{\rm d}h,
\end{eqnarray*}
where
\begin{eqnarray}
\label{b23}
H(g,h,x)=K(x,y,z){\rm det}((\tau^{-1})'(g,h,x)).
\end{eqnarray}
Here $y=y(g,h,x)$ and $z=z(g,h,x)$. Since $H$ is smooth in $X_3$ in all
variables and $I(\mathbb R^{3d}, S_3)$ is closed under multiplication with a
smooth function, we conclude that $B_3(g,h,x)H(g,h,x)\in I(\mathbb R^{3d},
S_3)$. Multiplying (\ref{b20}) by $H$, we arrive at
\begin{eqnarray}\label{b24}
B_3(g,h,x)H(g,h,x)=(2\pi)^{-d}\int_{\mathbb R^d}e^{{\rm i}g\cdot
\xi}c_4(h,x,\xi){\rm d}\xi,
\end{eqnarray}
where $c_4(h,x,\xi)$ has the asymptotic expansion 
\begin{eqnarray*}
c_4(h,x,\xi)\sim\sum_{l=0}^{\infty}\frac{\langle-{\rm
i}D_g,D_{\xi}\rangle^l}{l!}(c_3(h,x,\xi)H(g,h,x))\bigg|_{g=0}.
\end{eqnarray*}
In particular, the principle symbol of $c_4(h,x,\xi)$ is given by
\begin{eqnarray}\label{b25}
c_4^p(h,x,\xi)=\phi(y(g,h,x))|((\rho'_{11}(g,h,x))^{-1})^T\xi|^{-m}\theta(x)
|{\rm det}\rho'_{11}(g,h,x)|^{-1}H(g,h,x)\big|_{g=0}.
\end{eqnarray}

Combining (\ref{b24}) and the Fourier inversion rule, we obtain
\begin{eqnarray}\label{b26}
B_3(g,h,x)H(g,h,x)=(\mathcal{F}^{-1}c_4)(h,x,g).
\end{eqnarray}
Substituting (\ref{b26}) into $I(x,\kappa_1,\kappa_2)$ gives
\begin{align}\label{b27}
I(x,\kappa_1,\kappa_2)&=\frac{1}{\kappa_1^{l_1}\kappa_2^{l_2}}\int_{\mathbb
R^{2d}}e^{{\rm i}[(c_1\kappa_1+c_2\kappa_2)g\cdot
e_1+(c_1\kappa_1-c_2\kappa_2)h\cdot
e_1]}(\mathcal{F}^{-1}c_4)(h,x,g){\rm d}g{\rm d}h\\\notag
&=\frac{1}{\kappa_1^{l_1}\kappa_2^{l_2}}\int_{\mathbb R^d}e^{{\rm
i}(c_1\kappa_1-c_2\kappa_2)h\cdot
e_1}c_4(h,x,-(c_1\kappa_1+c_2\kappa_2)e_1){\rm d}h\\\notag
&=\frac{1}{\kappa_1^{l_1}\kappa_2^{l_2}}\frac{1}{{\rm
i}(c_1\kappa_1-c_2\kappa_2)}\int_{\mathbb
R^d}c_4(h,x,-(c_1\kappa_1+c_2\kappa_2)e_1){\rm d}e^{{\rm
i}(c_1\kappa_1-c_2\kappa_2)h_1}\cdots {\rm d}h_{d}\\\notag
&=-\frac{1}{\kappa_1^{l_1}\kappa_2^{l_2}}\frac{1}{{\rm
i}(c_1\kappa_1-c_2\kappa_2)}\int_{\mathbb R^d}e^{{\rm
i}(c_1\kappa_1-c_2\kappa_2)h_1}\partial_{h_1}c_4(h,x,
-(c_1\kappa_1+c_2\kappa_2)e_1){\rm d}h\\\notag
&=(-1)^n\frac{1}{\kappa_1^{l_1}\kappa_2^{l_2}}\frac{1}{({\rm
i}(c_1\kappa_1-c_2\kappa_2))^n}\int_{\mathbb R^d}e^{{\rm
i}(c_1\kappa_1-c_2\kappa_2)h_1}\partial^n_{h_1}c_4(h,x,
-(c_1\kappa_1+c_2\kappa_2)e_1){\rm d}h,
\end{align}
where we use the integrations by parts $n$ times and the fact that
$c_4(h,x,\xi)$ is $C^{\infty}$ smooth and compactly supported in the $(g,h,x)$
variables. Since $c_4(h,x,\xi)\in S_{1,0}^{-m}(\mathbb R^{2d}\times\mathbb
R^d)$, we have $|\partial^n_{h_1}c_4(h,x,\xi)|\leq c_n(1+|\xi|)^{-m}$ for all
positive integer $n$, where $c_n$ is independent of $(h,x)\in \mathbb R^{2d}$.
Therefore
\begin{align}
|I(x,\kappa_1,\kappa_2)|&\lesssim
\frac{1}{\kappa_1^{l_1}\kappa_2^{l_2}}\frac{1}{(1+|c_1\kappa_1-c_2\kappa_2|)^n}
\frac{1}{(1+|c_1\kappa_1+c_2\kappa_2|)^{m}}\notag\\\label{b28}
&\lesssim
\frac{1}{(\kappa_1\kappa_2)^{\min(l_1,l_2)}}\frac{1}{
(1+|c_1\kappa_1-c_2\kappa_2|)^n}\frac{1}{(c_1\kappa_1+c_2\kappa_2)^{m}},
\end{align}
where we use the fact that $\kappa_1\geq1, \kappa_2\geq 1$. We need to
consider the cases where $|c_1\kappa_1-c_2\kappa_2|\geq
(c_1\kappa_1+c_2\kappa_2)/2$ and
$|c_1\kappa_1-c_2\kappa_2|\leq(c_1\kappa_1+c_2\kappa_2)/2$.
If  $|c_1\kappa_1-c_2\kappa_2|\leq (c_1\kappa_1+c_2\kappa_2)/2$, a simple
calculation shows that $\kappa_1\kappa_2\geq
3(c_1\kappa_1+c_2\kappa_2)^2/(16c_1c_2)$ which implies 
\[
|I(x,\kappa_1,\kappa_2)|\lesssim\frac{1}{(1+|c_1\kappa_1-c_2\kappa_2|)^n}\frac
{1}{(c_1\kappa_1+c_2\kappa_2)^{m+2\min(l_1,l_2)}}.
\]
If $|c_1\kappa_1-c_2\kappa_2|\geq(c_1\kappa_1+c_2\kappa_2)/2$, we have
\[
|I(x,\kappa_1,\kappa_2)|\lesssim\frac{1}{(1+|c_1\kappa_1-c_2\kappa_2|)^{
n-2\min(l_1,l_2)}}\frac{1}{(c_1\kappa_1+c_2\kappa_2)^{m+2\min(l_1,l_2)}}.
\]
Noting that the positive integer $n$ is arbitrary, we conclude  
 \begin{align*}
|I(x,\kappa_1,\kappa_2)|&\lesssim\frac{1}{(1+|c_1\kappa_1-c_2\kappa_2|)^{n}}
\frac{1}{(c_1\kappa_1+c_2\kappa_2)^{m+2\min(l_1,l_2)}}\\
&\lesssim(1+|\kappa_1-\kappa_2|)^{-n}(\kappa_1+\kappa_2)^{-(m+2\min(l_1,l_2))},
\end{align*}
where we use the facts $c_1\kappa_1+c_2\kappa_2\leq
\min(c_1,c_2)(\kappa_1+\kappa_2)$ and $|c_1\kappa_1-c_2\kappa_2|\leq
c|\kappa_1-\kappa_2|$ for some constant $c$. So the estimate (\ref{b13}) holds.

Similarly, it is easy to see that $\mathbb
E(\tilde{u}(x,\kappa_1)\tilde{u}(x,\kappa_2))$ is a linear combination of the
integral
\begin{eqnarray*}
\tilde{I}(x,\kappa_1,\kappa_2):=\frac{1}{\kappa_1^{l_1}\kappa_2^{l_2}}\int_{
\mathbb R^{2d}}e^{{\rm i}(c_1\kappa_1|x-y|+c_2\kappa_2|x-z|)}K(x,y,z)\mathbb
E(q(y)q(z)){\rm d}y{\rm d}z.
\end{eqnarray*}
Observe that $\tilde{I}$ is analogous to $I$ where we replace $\kappa_2$ with
$-\kappa_2$. Since the proof of (\ref{b28}) allows $\kappa_2$ to be negative,
we may show that 
\begin{align*}
|\tilde{I}(x,\kappa_1,\kappa_2)|&\lesssim
\frac{1}{\kappa_1^{l_1}\kappa_2^{l_2}}\frac{1}{|c_1\kappa_1+c_2\kappa_2|^n}\frac
{1}{(1+|c_1\kappa_1-c_2\kappa_2|)^{m}}\\
&\lesssim (\kappa_1+\kappa_2)^{-n}(1+|\kappa_1-\kappa_2|)^{-m},
\end{align*}
which shows the estimate (\ref{b14}) and completes the proof. 
\end{proof}

Since $\mathbb E(\tilde{u}(x,\kappa_1)\overline{\tilde{u}(x,\kappa_2)})$ is a
linear combination of $I$ which satisfies the estimate (\ref{b13}), thus the
following result is a direct consequence of Lemma \ref{lemma2b}. 

\begin{lemma}\label{lemma3b}
For $\kappa_1,\kappa_2\geq 1$, the estimates
\begin{align*}
|\mathbb E(\tilde{u}(x,\kappa_1)\overline{\tilde{u}(x,\kappa_2)})|& \leq
c_n(\kappa_1+\kappa_2)^{-(m+1)}(1+|\kappa_1-\kappa_2|)^{-n},\\
|\mathbb E(\tilde{u}(x,\kappa_1)\tilde{u}(x,\kappa_2))|&\leq
c_n(\kappa_1+\kappa_2)^{-n}(1+|\kappa_1-\kappa_2|)^{-m}.
\end{align*}
holds uniformly for $x\in U$, where $n\in \mathbb N$ is arbitrary and $c_n>0$ is
a constant depending only on $n$.
\end{lemma}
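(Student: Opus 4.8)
The plan is to read off both estimates directly from Lemma \ref{lemma2b} by identifying the correct values of the exponents $l_1$, $l_2$ appearing in the integrals $I$ and $\tilde I$. First I would recall from (\ref{b10}) the explicit expansion
\begin{eqnarray*}
\mathbb E(\tilde u(x,\kappa_1)\overline{\tilde u(x,\kappa_2)})
=\frac{1}{16}\sum_{j_1,j_2=0}^{2}\frac{a_{j_1}^{(0)}\overline{a_{j_2}^{(0)}}}{\kappa_1^{j_1+\frac12}\kappa_2^{j_2+\frac12}}\int_{\mathbb R^{2d}}\frac{e^{{\rm i}(\kappa_1|x-y|-\kappa_2|x-z|)}}{|x-y|^{j_1+\frac12}|x-z|^{j_2+\frac12}}\,\mathbb E(f(y)f(z))\,{\rm d}y\,{\rm d}z,
\end{eqnarray*}
and note that each summand is precisely an integral of the form $I(x,\kappa_1,\kappa_2)$ in (\ref{b11}) with $c_1=c_2=1$, with $K(x,y,z)=|x-y|^{-j_1-1/2}|x-z|^{-j_2-1/2}$ (so the $m_i$, $n_i$ vanish and $p_1=j_1+\tfrac12$, $p_2=j_2+\tfrac12$), and — crucially — with $l_1=j_1+\tfrac12$ and $l_2=j_2+\tfrac12$.

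Next I would apply the bound (\ref{b13}) to each summand. Since $\min(l_1,l_2)=\min(j_1,j_2)+\tfrac12\geq\tfrac12$, the exponent $m+2\min(l_1,l_2)$ in (\ref{b13}) is at least $m+1$; as $\kappa_1,\kappa_2\geq 1$ we may bound $(\kappa_1+\kappa_2)^{-(m+2\min(l_1,l_2))}\leq(\kappa_1+\kappa_2)^{-(m+1)}$ uniformly. Summing the finitely many terms $j_1,j_2\in\{0,1,2\}$ and absorbing the fixed coefficients $a_{j}^{(0)}$ into the constant yields
\begin{eqnarray*}
|\mathbb E(\tilde u(x,\kappa_1)\overline{\tilde u(x,\kappa_2)})|\leq c_n(\kappa_1+\kappa_2)^{-(m+1)}(1+|\kappa_1-\kappa_2|)^{-n},
\end{eqnarray*}
uniformly for $x\in U$, which is the first claimed estimate. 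For the second estimate one repeats the computation (\ref{b10}) without conjugating the second factor, obtaining the same expansion but with $e^{{\rm i}(\kappa_1|x-y|+\kappa_2|x-z|)}$; each summand is then of the form $\tilde I(x,\kappa_1,\kappa_2)$, and the bound (\ref{b14}) of Lemma \ref{lemma2b} gives directly $c_n(\kappa_1+\kappa_2)^{-n}(1+|\kappa_1-\kappa_2|)^{-m}$ after summing.

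There is essentially no obstacle here: the lemma is a bookkeeping corollary of Lemma \ref{lemma2b}, and the only point requiring a moment's care is checking that the half-integer shifts $j+\tfrac12$ coming from the large-argument Hankel asymptotic (\ref{b7}) enter as the exponents $l_1,l_2$ (not as part of $K$), so that $\min(l_1,l_2)\geq\tfrac12$ and the gain of one extra power of $(\kappa_1+\kappa_2)^{-1}$ over the generic bound is legitimate. I would make that identification explicit and then state that the result follows.
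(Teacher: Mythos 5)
Your proposal is correct and is essentially the paper's own argument: the paper dispatches Lemma \ref{lemma3b} in one sentence as a direct consequence of Lemma \ref{lemma2b} applied to the expansion (\ref{b10}), and the second estimate is literally (\ref{b14}). Your identification $l_1=j_1+\tfrac12$, $l_2=j_2+\tfrac12$, hence $\min(l_1,l_2)\geq\tfrac12$ and $m+2\min(l_1,l_2)\geq m+1$, is exactly the bookkeeping the paper leaves implicit, and it is the right justification for the extra power of $(\kappa_1+\kappa_2)^{-1}$.
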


To derive the linear relationship between the scattering data and the function
in the principle symbol, it is required to compute the order of $\mathbb
E(|\tilde{u}(x,\kappa)|^2)$ in terms of $\kappa$. To this end, we study the
asymptotic of $I(x,\kappa,\kappa)$ for large $\kappa$.
\begin{lemma}\label{lemma4b}
For $\kappa_1=\kappa_2=\kappa$, the following asymptotic holds
\[
 I(x,\kappa,\kappa) =
R_d(x,\kappa)\kappa^{-(l_1+l_2+m)}+O(\kappa^{-(l_1+l_2+m+1)}),
 \]
 where $R_d(x,\kappa)$ is given by
 \[
R_d(x,\kappa)=C_d\int_{\mathbb R^d}e^{{\rm
i}(c_1-c_2)|x-y|\kappa}\frac{(x_1-y_1)^{m_1+n_1}\cdots(x_d-y_d)^{m_d+n_d}}{
|x-y|^{p_1+p_2}}\phi(y){\rm d}y
\]
with 
\begin{eqnarray*}
C_2=-\frac{1}{64}c_{m},\quad C_3 =\frac{1}{8}c_{m},\quad
c_{m}=\left(\frac{2}{c_1+c_2}\right)^{m}.
\end{eqnarray*}
\end{lemma}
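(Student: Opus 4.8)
The plan is to start from the representation of $I$ obtained in the proof of Lemma~\ref{lemma2b} and restrict it to the diagonal $\kappa_1=\kappa_2=\kappa$. Setting $\kappa_1=\kappa_2=\kappa$ in the second line of (\ref{b27}) yields
\[
I(x,\kappa,\kappa)=\frac{1}{\kappa^{l_1+l_2}}\int_{\mathbb R^d}e^{{\rm i}(c_1-c_2)\kappa\,h\cdot e_1}\,c_4\big(h,x,-(c_1+c_2)\kappa e_1\big)\,{\rm d}h,
\]
where the $h$-integration runs over a fixed compact set and $c_4(h,x,\cdot)$ is smooth. Since $c_4\in S_{1,0}^{-m}(\mathbb R^{2d}\times\mathbb R^d)$ is a \emph{classical} symbol, it splits as $c_4=c_4^p+r_4$ with $c_4^p$ positively homogeneous of degree $-m$ in $\xi$ for $|\xi|\ge1$ and $r_4\in S_{1,0}^{-m-1}$. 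Inserting $\xi=-(c_1+c_2)\kappa e_1$, using the homogeneity of $c_4^p$ and the bound $|r_4(h,x,\xi)|\le c(1+|\xi|)^{-m-1}$ over the compact $h$-domain, I obtain
\[
I(x,\kappa,\kappa)=\frac{\kappa^{-(l_1+l_2+m)}}{(c_1+c_2)^m}\int_{\mathbb R^d}e^{{\rm i}(c_1-c_2)\kappa\,h\cdot e_1}\,c_4^p(h,x,-e_1)\,{\rm d}h+O\big(\kappa^{-(l_1+l_2+m+1)}\big).
\]

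It then remains to evaluate the integral on the right and match it with $R_d$. By (\ref{b25}) every factor of $c_4^p$ is taken at $g=0$, and I would compute them in turn. Since $\rho_1(0,h,x)=0$, the inverse function theorem gives $\rho'_{11}(0,h,x)=\partial_{E_1}y=(\partial_yE_1)^{-1}$, so $\big((\rho'_{11})^{-1}\big)^Te_1|_{g=0}=\nabla_y t_1=\tfrac12\,(y-x)/|x-y|$, which has modulus $\tfrac12$; hence $\big|\big((\rho'_{11})^{-1}\big)^T(-(c_1+c_2)\kappa e_1)\big|^{-m}=2^m\big((c_1+c_2)\kappa\big)^{-m}$, and combined with the prefactor $(c_1+c_2)^{-m}$ this is exactly $c_m=\big(2/(c_1+c_2)\big)^m$. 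Next $\theta(x)=1$ for $x\in U$, and at $g=0$ one has $E_1=E_2$, hence $z=y$, so $K(x,y,z)|_{g=0}=K(x,y,y)=(x_1-y_1)^{m_1+n_1}\cdots(x_d-y_d)^{m_d+n_d}/|x-y|^{p_1+p_2}$, precisely the kernel appearing in $R_d$. The remaining scalars are $|\det\rho'_{11}|^{-1}=|\det\partial_yE_1|$ and the Jacobian $\det((\tau^{-1})')|_{g=0}=\big(2^d|\det\partial_yE_1|^2\big)^{-1}$ hidden in $H=K\det((\tau^{-1})')$ (using $\det\tau'=2^d\det(\partial_yE_1)\det(\partial_zE_2)$ together with $z=y$, $E_2=E_1$ at $g=0$).

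The final step is the substitution $h\mapsto y$ in the remaining integral. At $g=0$ the map $\tau_1$ collapses to a rescaled polar coordinate system when $d=2$ and to the rescaled spherical system when $d=3$, and $E_1=E_2=h/2$ forces $h\cdot e_1=h_1=|x-y|$; this produces the oscillatory factor $e^{{\rm i}(c_1-c_2)|x-y|\kappa}$ of $R_d$. Differentiating the defining formulas of $\tau_1$ gives $|\det\partial_yE_1|$ and the substitution Jacobian $|\det(\partial y/\partial h)|$ in closed form, and this is precisely where the cases $d=2$ and $d=3$ must be treated separately. Assembling these numerical factors, all the powers of $|x-y|$ cancel down to $|x-y|^{-(p_1+p_2)}$ and the constant collapses to $C_d$, leaving
\[
I(x,\kappa,\kappa)=C_d\,\kappa^{-(l_1+l_2+m)}\int_{\mathbb R^d}e^{{\rm i}(c_1-c_2)|x-y|\kappa}\,\frac{(x_1-y_1)^{m_1+n_1}\cdots(x_d-y_d)^{m_d+n_d}}{|x-y|^{p_1+p_2}}\,\phi(y)\,{\rm d}y+O\big(\kappa^{-(l_1+l_2+m+1)}\big).
\]

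The main obstacle is this Jacobian bookkeeping: one has to track the determinants produced by $\eta$, $\tau_1$, $\tau_2$ and by the final polar/spherical substitution, and verify that they conspire to the claimed form, leaving the constant $C_d$ (explicitly, $C_2=-\tfrac{1}{64}c_m$ and $C_3=\tfrac18c_m$). By contrast, extracting the principal symbol and the $\kappa^{-m}$ scaling, and estimating the $O(\kappa^{-(l_1+l_2+m+1)})$ remainder, is routine once Lemma~\ref{lemma2b} is available.
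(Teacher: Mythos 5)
Your proposal is correct and follows essentially the same route as the paper: restrict \eqref{b27} to the diagonal, split $c_4=c_4^p+r$ and use homogeneity of the principal symbol to extract the $\kappa^{-(l_1+l_2+m)}$ factor and the $O(\kappa^{-(l_1+l_2+m+1)})$ remainder, then evaluate \eqref{b25} at $g=0$ and convert the $h$-integral back to a $y$-integral. The determinant identities you state without full derivation (e.g.\ $(\rho'_{11}(0,h,x))^{-1}=\partial_y E_1$, hence $|((\rho'_{11})^{-1})^\top e_1|=\tfrac12$, and $\det((\tau^{-1})')|_{g=0}=\bigl(2^d(\det\partial_yE_1)^2\bigr)^{-1}$) agree with the paper's explicit case-by-case computations for $d=2,3$, which constitute the remaining bookkeeping you correctly identify as the bulk of the work.
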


\begin{proof}
Setting $\kappa_1=\kappa_2=\kappa$ in (\ref{b27}) gives 
\begin{eqnarray*}\label{b33}
I(x,\kappa,\kappa) = \frac{1}{\kappa^{l_1+l_2}}\int_{\mathbb R^{d}}e^{{\rm
i}(c_1-c_2)\kappa h\cdot e_1}c_4(h,x,-(c_1+c_2)\kappa e_1){\rm d}g{\rm d}h.
\end{eqnarray*}
 The symbol $c_4(h,x,\xi)\in S_{1,0}^{-m}(\mathbb R^{2d}\times\mathbb R^d)$ can
be decomposed into 
\begin{eqnarray*}
c_4(h,x,\xi)=c_4^p(h,x,\xi)+r(h,x,\xi),
\end{eqnarray*}
where $c_4^p(h,x,\xi)\in S_{1,0}^{-m}(\mathbb R^{2d}\times\mathbb R^d)$ is the
principal symbol which is given by (\ref{b25}) and $r(h,x,\xi)\in
S_{1,0}^{-m-1}(\mathbb R^{2d}\times\mathbb R^d)$ is the lower order remainder
terms which is smooth and compactly supported in $(g,h,x)$-variables. Thus, we
have 
\begin{align}
I(x,\kappa,\kappa)&=\frac{1}{\kappa^{l_1+l_2}}\int_{\mathbb R^d}e^{{\rm
i}(c_1-c_2)\kappa h\cdot e_1}(c_4^p(h,x,-(c_1+c_2)\kappa
e_1)+r(h,x,-(c_1+c_2)\kappa e_1)){\rm d}h\notag\\\label{b34}
&=\frac{1}{\kappa^{l_1+l_2}}\int_{\mathbb R^d}e^{{\rm i}(c_1-c_2)\kappa h\cdot
e_1}c_4^p(h,x,-(c_1+c_2)\kappa e_1){\rm d}h+O(\kappa^{-(l_1+l_2+m+1)}).
\end{align}
By (\ref{b25}), 
\begin{align}
c_4^p(h,x,-(c_1+c_2)\kappa
e_1)&=\phi(y(g,h,x))((c_1+c_2)|((\rho'_{11}(g,h,x))^{-1})^\top
e_1|\kappa)^{-m}\theta(x)\notag\\\label{b35}
&\quad\times|{\rm det}\rho'_{11}(g,h,x)|^{-1}H(g,h,x)\big|_{g=0}.
\end{align}
Letting $a=(c_1+c_2)^2|((\rho'_{11}(g,h,x))^{-1})^\top e_1|^2\neq 0$, we
substitute (\ref{b35}) into formula (\ref{b34}) and obtain
\[
I(x,\kappa,\kappa) =
R_d(x,\kappa)\kappa^{-(l_1+l_2+m)}+O(\kappa^{-(l_1+l_2+m+1)}),
\]
where
\begin{eqnarray}\label{b38}
R_d(x,\kappa)=\theta(x)\int_{\mathbb R^d} e^{{\rm i}(c_1-c_2)\kappa h\cdot
e_1}\frac{\phi(y(0,h,x))H(0,h,x)}{a^{\frac{m}{2}}|{\rm
det}\rho'_{11}(0,h,x)|}{\rm d}h.
\end{eqnarray}

Next we need to compute $a$. Noting that
$a=(c_1+c_2)^2|((\rho'_{11}(g,h,x))^{-1})^\top e_1|^2$, we compute
$\rho'_{11}(g,h,x))^{-1}$ first. 

In two dimensions, we have from the definition of $\rho_1$ that
\begin{eqnarray*}
\rho'_{11}(g,h,x)=\partial_{g}v=\begin{bmatrix} \partial_{g_1}v_1 &
\partial_{g_2}v_1\\ \partial_{g_1}v_2 & \partial_{g_2}v_2\end{bmatrix}.
\end{eqnarray*}
It is convenient to compute
\begin{eqnarray*}
\rho'_{11}(g,h,x)^{-1}=\partial_{v}g=\begin{bmatrix} \partial_{v_1}g_1 &
\partial_{v_2}g_1\\ \partial_{v_1}g_2 & \partial_{v_2}g_2\end{bmatrix}.
\end{eqnarray*}
By the definition of $\eta$, we have $v=y-z$, $w=y+z$ which implies that
$y=(v+w)/2$, $z=(w-v)/2$. Thus we obtain $y_1=(v_1+w_1)/2$,  $y_2=(v_2+w_2)/2$,
$z_1=(w_1-v_1)/2$, and $z_2=(w_2-v_2)/2$.  When $g=0$ which means $y=z$, we have
\begin{align*}
\frac{\partial g_1}{\partial v_1}&=\frac{\partial t_1}{\partial
v_1}-\frac{\partial t_2}{\partial v_1}=\frac{\partial t_1}{\partial
y_1}\frac{\partial y_1}{\partial v_1}-\frac{\partial t_2}{\partial
z_1}\frac{\partial z_1}{\partial v_1}
=\frac{1}{4}\left(\frac{y_1-x_1}{|x-y|}+\frac{z_1-x_1}{|x-z|}\right)=\frac{1}{2}
\frac{y_1-x_1}{|x-y|}=\frac{1}{2}\sin\alpha,\\
\frac{\partial g_1}{\partial v_2}&=\frac{\partial t_1}{\partial
v_2}-\frac{\partial t_2}{\partial v_2}=\frac{\partial t_1}{\partial
y_2}\frac{\partial y_2}{\partial v_2}-\frac{\partial t_2}{\partial
z_2}\frac{\partial z_2}{\partial v_2}
=\frac{1}{4}\left(\frac{y_2-x_2}{|x-y|}+\frac{z_2-x_2}{|x-z|}\right)=\frac{1}{2}
\frac{y_2-x_2}{|x-y|}=\frac{1}{2}\cos\alpha,\\
\frac{\partial g_2}{\partial v_1}&=\frac{\partial s_1}{\partial
v_1}-\frac{\partial s_2}{\partial v_1}=\frac{\partial s_1}{\partial
y_1}\frac{\partial y_1}{\partial v_1}-\frac{\partial s_2}{\partial
z_1}\frac{\partial z_1}{\partial v_1}
=\frac{1}{2}\left(\frac{\partial s_1}{\partial y_1}+\frac{\partial s_2}{\partial
z_1}\right)\\
&=\frac{1}{2}\bigg[\frac{y_1-x_1}{|x-y|}\arcsin\left(\frac{y_1-x_1}{|x-y|}
\right)+\bigg(1-\left(\frac{y_1-x_1}{|x-y|}\right)^2\bigg)^{\frac{1}{2}}\bigg]
=\frac{1}{2}(\alpha\sin\alpha+\cos\alpha),\\
\frac{\partial g_2}{\partial v_2}&=\frac{\partial s_1}{\partial
v_2}-\frac{\partial s_2}{\partial v_2}=\frac{\partial s_1}{\partial
y_2}\frac{\partial y_2}{\partial v_2}-\frac{\partial s_2}{\partial
z_2}\frac{\partial z_2}{\partial v_2}
=\frac{1}{2}\left(\frac{\partial s_1}{\partial y_2}+\frac{\partial s_2}{\partial
z_2}\right)\\
&=\frac{1}{2}\bigg[\frac{y_2-x_2}{|x-y|}\arcsin\left(\frac{y_1-x_1}{|x-y|}
\right)-\left(1-\left(\frac{y_1-x_1}{|x-y|}\right)^2\right)^{-\frac{1}{2}}\frac{
(y_1-x_1)(y_2-x_2)}{|x-y|^2}\bigg]\\
&=\frac{1}{2}(\alpha\cos\alpha-\sin\alpha),
\end{align*}
where $\alpha=\arcsin\big(\frac{y_1-x_1}{|x-y|}\big)$. Hence,
\begin{align*}
\left(\rho'_{11}(0,h,x)\right)^{-1}&=\frac{1}{2}\begin{bmatrix} \sin\alpha &
\cos\alpha\\ \alpha\sin\alpha+\cos\alpha &
\alpha\cos\alpha-\sin\alpha\end{bmatrix},\\
\left(\left(\rho'_{11}(0,h,x)\right)^{-1}\right)^{T}e_1&=\frac{1}{2}
(\sin\alpha, \cos\alpha)^\top.
\end{align*}
Thus we obtain $a=(c_1+c_2)^2/4$ and $|{\rm det}\rho'_{11}(0,h,x)|^{-1}=1/4$.
Next we focus on the computation of $H(0,h,x)$. From (\ref{b23}) we have
$H(g,h,x)=K(x,y,z){\rm det}((\tau^{-1})'(g,h,x))$, thus we compute $|{\rm
det}\tau'(0,h,x)|$ first. Recalling that $\tau:\mathbb R^6\rightarrow\mathbb
R^6$ is given by $\tau(g,h,x)=(y,z,x)$, we have 
\begin{eqnarray*}
\tau'(g,h,x)=\begin{bmatrix}\partial_g y & \partial_h
y & \partial_x y\\ \partial_g z & \partial_h z & \partial_x z\\
\partial_g x & \partial_h x & \partial_x x
\end{bmatrix}, \quad
(\tau'(g,h,x))^{-1}=\begin{bmatrix}\partial_y g & \partial_z
g & \partial_x g \\ \partial_y h & \partial_z h & \partial_x
h \\ 
\partial_y x & \partial_z x & \partial_x
x \end{bmatrix}.
\end{eqnarray*}
Now we calculate $\partial_y g$. Noting $g=(t_1-t_2,
s_1-s_2)$, we obtain 
\begin{align*}
\frac{\partial g_1}{\partial y_1}=\frac{\partial t_1}{\partial
y_1}&=\frac{1}{2}\frac{y_1-x_1}{|x-y|}=\frac{1}{2}\sin\alpha,\qquad
\frac{\partial g_1}{\partial y_2}=\frac{\partial t_1}{\partial
y_2}=\frac{1}{2}\frac{y_2-x_2}{|x-y|}=\frac{1}{2}\cos\alpha,\\
\frac{\partial g_2}{\partial y_1}=\frac{\partial s_1}{\partial
y_1}&=\frac{1}{2}\bigg[\frac{y_1-x_1}{|x-y|}\arcsin\left(\frac{y_1-x_1}{|x-y|}
\right)+\bigg(1-\left(\frac{y_1-x_1}{|x-y|}\right)^2\bigg)^{\frac{1}{2}}\bigg]
=\frac{1}{2}(\alpha\sin\alpha+\cos\alpha),\\
\frac{\partial g_2}{\partial y_2}=\frac{\partial s_1}{\partial
y_2}&=\frac{1}{2}\bigg[\frac{y_2-x_2}{|x-y|}\arcsin\left(\frac{y_1-x_1}{|x-y|}
\right)-\left(1-\left(\frac{y_1-x_1}{|x-y|}\right)^2\right)^{-\frac{1}{2}}\frac{
(y_1-x_1)(y_2-x_2)}{|x-y|^2}\bigg]\\
&=\frac{1}{2}(\alpha\cos\alpha-\sin\alpha).
\end{align*}
Thus
\[
\partial_y g = \frac{1}{2}\begin{bmatrix} \sin\alpha &
\cos\alpha\\ \alpha\sin\alpha+\cos\alpha &
\alpha\cos\alpha-\sin\alpha\end{bmatrix}.
\]
When $g=0$, we have $y=z$. A similar calculation yields that $\partial_z
g =-\partial_y g$, $\partial_y h=\partial_y g$, $\partial_z h=\partial_y g$.
Obviously, $\partial_y x=0$, $\partial_z x=0$, $\partial_x x=I$.
Hence
\begin{align*}
&{\rm det}((\tau'(0,h,x))^{-1})\\
&=\frac{1}{2^4}\left|\begin{array}{cccc}\sin\alpha
& \cos\alpha & -\sin\alpha &-\cos\alpha\\
\alpha\sin\alpha+\cos\alpha&\alpha\cos\alpha-\sin\alpha&-\alpha\sin\alpha-\cos
\alpha&-\alpha\cos\alpha+\sin\alpha\\ \sin\alpha & \cos\alpha & \sin\alpha
&\cos\alpha\\
\alpha\sin\alpha+\cos\alpha&\alpha\cos\alpha-\sin\alpha&\alpha\sin\alpha+\cos
\alpha&\alpha\cos\alpha-\sin\alpha\end{array}\right|.
\end{align*}
A direct calculation shows that ${\rm det}((\tau'(0,h,x))^{-1})=1/4$ which
implies ${\rm det}((\tau'(0,h,x)))=4$. Substituting these results into
(\ref{b38}) gives
\begin{eqnarray*}
R_2(x,\kappa)=\frac{1}{16}c_m\int_{\mathbb R^2}e^{{\rm
i}(c_1-c_2)|x-y|\kappa}\frac{(x_1-y_1)^{m_1+n_1}(x_2-y_2)^{m_2+n_2}}{|x-y|^{
p_1+p_2}}\phi(y){\rm d}h,
\end{eqnarray*}
where $(y,z,x)=\tau(g,h,x)$. Noting ${\rm det}\left(\frac{\partial
h}{\partial y}\right)=-\frac{1}{4}$, we have 
\begin{eqnarray*}
R_2(x,\kappa)=C_2\int_{\mathbb R^2}e^{{\rm
i}(c_1-c_2)|x-y|\kappa}\frac{(x_1-y_1)^{m_1+n_1}(x_2-y_2)^{m_2+n_2}}{|x-y|^{
p_1+p_2}}\phi(y){\rm d}y.
\end{eqnarray*}

In three dimensions, by the definition of $\rho_1$, we have
\begin{eqnarray*}
\rho'_{11}(g,h,x)=\partial_{g}v=\begin{bmatrix} \partial_{g_1}v_1 &
\partial_{g_2}v_1 & \partial_{g_3}v_1\\ \partial_{g_1}v_2 & \partial_{g_2}v_2 &
\partial_{g_3}v_2\\
\partial_{g_1}v_3 & \partial_{g_2}v_3 & \partial_{g_3}v_3\end{bmatrix},
\end{eqnarray*}
which is available after we compute its inverse matrix
\begin{eqnarray*}
\rho'_{11}(g,h,x)^{-1}=\partial_{v}g=\begin{bmatrix} \partial_{v_1}g_1 &
\partial_{v_2}g_1 & \partial_{v_3}g_1\\ \partial_{v_1}g_2 & \partial_{v_2}g_2 &
\partial_{v_3}g_2\\
\partial_{v_1}g_3 & \partial_{v_2}g_3 & \partial_{v_3}g_3\end{bmatrix}.
\end{eqnarray*}
By the definition of $\eta$, we have $v=y-z$, $w=y+z$ which implies that
$y=(v+w)/2$, $z=(w-v)/2$. Thus $y_1=(v_1+w_1)/2$,  $y_2=(v_2+w_2)/2$, 
$y_3=(v_3+w_3)/2$, $z_1=(w_1-v_1)/2$, $z_2=(w_2-v_2)/2$, and $z_3=(w_3-v_3)/2$.
Define
\begin{eqnarray*}
\alpha = \arccos\left(\frac{y_3-x_3}{|x-y|}\right),\quad \beta =
\arctan\left(\frac{y_2-x_2}{y_1-x_1}\right).
\end{eqnarray*}
A direct computation shows that 
\begin{eqnarray*}
\frac{y_1-x_1}{|x-y|}=\sin\alpha\cos\beta,\quad
\frac{y_2-x_2}{|x-y|}=\sin\alpha\sin\beta,\quad 
\frac{y_3-x_3}{|x-y|}=\cos\alpha. 
\end{eqnarray*}
 When $g=0$ which implies $y=z$, we have
\begin{align*}
\frac{\partial g_1}{\partial v_1}&=\frac{\partial t_1}{\partial
v_1}-\frac{\partial t_2}{\partial v_1}=\frac{\partial t_1}{\partial
y_1}\frac{\partial y_1}{\partial v_1}-\frac{\partial t_2}{\partial
z_1}\frac{\partial z_1}{\partial v_1}
=\frac{1}{4}\left(\frac{y_1-x_1}{|x-y|}+\frac{z_1-x_1}{|x-z|}\right)=\frac{1}{2}
\frac{y_1-x_1}{|x-y|}=\frac{1}{2}\sin\alpha\cos\beta,\\
\frac{\partial g_1}{\partial v_2}&=\frac{\partial t_1}{\partial
v_2}-\frac{\partial t_2}{\partial v_2}=\frac{\partial t_1}{\partial
y_2}\frac{\partial y_2}{\partial v_2}-\frac{\partial t_2}{\partial
z_2}\frac{\partial z_2}{\partial v_2}
=\frac{1}{4}\left(\frac{y_2-x_2}{|x-y|}+\frac{z_2-x_2}{|x-z|}\right)=\frac{1}{2}
\frac{y_2-x_2}{|x-y|}=\frac{1}{2}\sin\alpha\sin\beta,\\
\frac{\partial g_1}{\partial v_3}&=\frac{\partial r_1}{\partial
v_3}-\frac{\partial r_2}{\partial v_3}=\frac{\partial r_1}{\partial
y_3}\frac{\partial y_3}{\partial v_3}-\frac{\partial r_2}{\partial
z_3}\frac{\partial z_3}{\partial v_3}
=\frac{1}{4}\left(\frac{y_3-x_3}{|x-y|}+\frac{z_3-x_3}{|x-z|}\right)=\frac{1}{2}
\frac{y_3-x_3}{|x-y|}=\frac{1}{2}\cos\alpha,\\
\frac{\partial g_2}{\partial v_1}&=\frac{\partial s_1}{\partial
v_1}-\frac{\partial s_2}{\partial v_1}=\frac{\partial s_1}{\partial
y_1}\frac{\partial y_1}{\partial v_1}-\frac{\partial s_2}{\partial
z_1}\frac{\partial z_1}{\partial v_1}
=\frac{1}{2}\left(\frac{\partial s_1}{\partial y_1}+\frac{\partial s_2}{\partial
z_1}\right)\\
&=\frac{1}{2}\bigg[\frac{y_1-x_1}{|x-y|}\arccos\left(\frac{y_3-x_3}{|x-y|}
\right)+\bigg(1-\left(\frac{y_3-x_3}{|x-y|}\right)^2\bigg)^{-\frac{1}{2}}\frac{
(y_3-x_3)(y_1-x_1)}{|x-y|^2}\bigg]\\
&=\frac{1}{2}\cos\beta(\alpha\sin\alpha+\cos\alpha),\\
\frac{\partial g_2}{\partial v_2}&=\frac{\partial s_1}{\partial
v_2}-\frac{\partial s_2}{\partial v_2}=\frac{\partial s_1}{\partial
y_2}\frac{\partial y_2}{\partial v_2}-\frac{\partial s_2}{\partial
z_2}\frac{\partial z_2}{\partial v_2}
=\frac{1}{2}\left(\frac{\partial s_1}{\partial y_2}+\frac{\partial s_2}{\partial
z_2}\right)\\
&=\frac{1}{2}\bigg[\frac{y_2-x_2}{|x-y|}\arccos\left(\frac{y_3-x_3}{|x-y|}
\right)+\bigg(1-\left(\frac{y_3-x_3}{|x-y|}\right)^2\bigg)^{-\frac{1}{2}}\frac{
(y_3-x_3)(y_2-x_2)}{|x-y|^2}\bigg]\\
&=\frac{1}{2}\sin\beta(\alpha\sin\alpha+\cos\alpha),\\
\frac{\partial g_2}{\partial v_3}&=\frac{\partial s_1}{\partial
v_3}-\frac{\partial s_2}{\partial v_3}=\frac{\partial s_1}{\partial
y_3}\frac{\partial y_3}{\partial v_3}-\frac{\partial s_2}{\partial
z_3}\frac{\partial z_3}{\partial v_3}
=\frac{1}{2}\left(\frac{\partial s_1}{\partial y_3}+\frac{\partial s_2}{\partial
z_3}\right)\\
&=\frac{1}{2}\bigg[\frac{y_3-x_3}{|x-y|}\arccos\left(\frac{y_3-x_3}{|x-y|}
\right)+\bigg(1-\left(\frac{y_3-x_3}{|x-y|}\right)^2\bigg)^{\frac{1}{2}}\bigg]
=\frac{1}{2}(\alpha\cos\alpha-\sin\alpha),
\end{align*}

\begin{align*}
\frac{\partial g_3}{\partial v_1}&=\frac{\partial r_1}{\partial
v_1}-\frac{\partial r_2}{\partial v_1}=\frac{\partial r_1}{\partial
y_1}\frac{\partial y_1}{\partial v_1}-\frac{\partial r_2}{\partial
z_1}\frac{\partial z_1}{\partial v_1}
=\frac{1}{2}\left(\frac{\partial r_1}{\partial y_1}+\frac{\partial r_2}{\partial
z_1}\right)\\
&=\frac{1}{2}\bigg[\frac{y_1-x_1}{|x-y|}\arctan\left(\frac{y_2-x_2}{y_1-x_1}
\right)-
\frac{|x-y|}{y_1-x_1}\frac{y_2-x_2}{\sqrt{(y_1-x_1)^2+(y_2-x_2)^2}}\bigg]\\
&=\frac{1}{2}\left(\beta\sin\alpha\cos\beta-\frac{\sin\beta}{
\sin\alpha\cos\beta}\right),\\
\frac{\partial g_3}{\partial v_2}&=\frac{\partial r_1}{\partial
v_2}-\frac{\partial r_2}{\partial v_2}=\frac{\partial r_1}{\partial
y_2}\frac{\partial y_2}{\partial v_2}-\frac{\partial r_2}{\partial
z_2}\frac{\partial z_2}{\partial v_2}
=\frac{1}{2}\left(\frac{\partial r_1}{\partial y_2}+\frac{\partial r_2}{\partial
z_2}\right)\\
&=\frac{1}{2}\bigg[\frac{y_2-x_2}{|x-y|}\arctan\left(\frac{y_2-x_2}{y_1-x_1}
\right)+
\frac{|x-y|}{\sqrt{(y_1-x_1)^2+(y_2-x_2)^2}}\bigg]\\
&=\frac{1}{2}\left(\beta\sin\alpha\sin\beta+\frac{1}{\sin\alpha}\right),\\
\frac{\partial g_3}{\partial v_3}&=\frac{\partial r_1}{\partial
v_3}-\frac{\partial r_2}{\partial v_3}=\frac{\partial r_1}{\partial
y_3}\frac{\partial y_3}{\partial v_3}-\frac{\partial r_2}{\partial
z_3}\frac{\partial z_3}{\partial v_3}
=\frac{1}{2}\left(\frac{\partial r_1}{\partial y_3}+\frac{\partial r_2}{\partial
z_3}\right)\\
&=\frac{1}{2}\bigg[\frac{y_3-x_3}{|x-y|}\arctan\left(\frac{y_2-x_2}{y_1-x_1}
\right)\bigg]=\frac{1}{2}\beta\cos\alpha.
\end{align*}

Hence,
\begin{eqnarray*}
\left(\rho'_{11}(0,h,x)\right)^{-1}= \frac{1}{2}\begin{bmatrix}
\sin\alpha\cos\beta & \sin\alpha\sin\beta & \cos\alpha\\
\cos\beta(\alpha\sin\alpha+\cos\alpha)&\sin\beta(\alpha\sin\alpha+\cos\alpha)
&\alpha\cos\alpha-\sin\alpha\\
\beta\sin\alpha\cos\beta-\frac{\sin\beta}{\sin\alpha\cos\beta}&
\beta\sin\alpha\sin\beta+\frac{1}{\sin\alpha}&\beta\cos\alpha
\end{bmatrix},
\end{eqnarray*}
which gives 
\begin{eqnarray*}
\left(\left(\rho'_{11}(0,h,x)\right)^{-1}\right)^\top e_1=\frac{1}{2}
(\sin\alpha\cos\beta, \sin\alpha\sin\beta, \cos\alpha)^\top.
\end{eqnarray*}
Thus we obtain $a=(c_1+c_2)^2/4$ and $|{\rm
det}\rho'_{11}(0,h,x)|^{-1}=\frac{1}{8\sin\alpha\cos\beta}=\frac{1}{8}\frac{
|y-x|}{y_1-x_1}$. Next we focus on the computation of $H(0,h,x)$, which
requires to compute $|{\rm det}\tau'(0,h,x)|$ first. Recalling that
$\tau^{-1}:\mathbb R^9\rightarrow\mathbb R^9$ is given by
$\tau^{-1}(g,h,x)=(y,z,x)$, we have 
\begin{eqnarray*}
(\tau^{-1})'(g,h,x)=\begin{bmatrix}\partial_g y & \partial_h
y & \partial_x y\\ \partial_g z & \partial_h z & \partial_x z\\
\partial_g x & \partial_h x & \partial_x x
\end{bmatrix},\quad
((\tau^{-1})'(g,h,x))^{-1}=\begin{bmatrix}\partial_y g
& \partial_z g & \partial_x g \\
\partial_y h & \partial_z h & \partial_x h \\
\partial_y x & \partial_z x & \partial_x x
\end{bmatrix}.
\end{eqnarray*}
Now we calculate $\frac{\partial g}{\partial y}$. Noting $g=(t_1-t_2, s_1-s_2,
r_1-r_2)$, we obtain that 
\begin{eqnarray*}
\frac{\partial g}{\partial y}= \frac{1}{2}\begin{bmatrix} \sin\alpha\cos\beta &
\sin\alpha\sin\beta & \cos\alpha\\
\cos\beta(\alpha\sin\alpha+\cos\alpha)&\sin\beta(\alpha\sin\alpha+\cos\alpha)
&\alpha\cos\alpha-\sin\alpha\\
\beta\sin\alpha\cos\beta-\frac{\sin\beta}{\sin\alpha\cos\beta}&
\beta\sin\alpha\sin\beta+\frac{1}{\sin\alpha}&\beta\cos\alpha
\end{bmatrix}.
\end{eqnarray*}
When $g=0$, we have $y=z$. A simple calculation yields that $\frac{\partial
g}{\partial z}=-\frac{\partial g}{\partial y}$, $\frac{\partial h}{\partial
y}=\frac{\partial g}{\partial y}$, $\frac{\partial h}{\partial
z}=\frac{\partial g}{\partial y}$. It is clear to note that $\frac{\partial
x}{\partial y}=0$, $\frac{\partial x}{\partial z}=0$, $\frac{\partial
x}{\partial x}=I$. Hence
\[
{\rm det}(((\tau^{-1})'(0,h,x))^{-1})=\frac{1}{8}\frac{|y-x|^2}{(y_1-x_1)^2},
\]
which implies 
\[
{\rm det}((\tau^{-1})'(0,h,x))=8\frac{(y_1-x_1)^2}{|y-x|^2}.
\]
Substituting these results into (\ref{b38}) gives
\begin{align*}
R_3(x,\kappa)=&c_m\int_{\mathbb R^3}e^{{\rm i}(c_1-c_2)\kappa h\cdot
e_1}\frac{(x_1-y_1)^{m_1+n_1}(x_2-y_2)^{m_2+n_2}(x_3-y_3)^{m_3+n_3}}{|x-y|^{
p_1+p_2}}\\
&\quad\times\frac{(y_1-x_1)}{|y-x|}\phi(y(0,h, x)){\rm d}h,
\end{align*}
where $(y,z,x)=\tau^{-1}(g,h,x)$. Noting  ${\rm det}\left(\frac{\partial
h}{\partial y}\right)=\frac{1}{8}\frac{|x-y|}{y_1-x_1}$, we arrive at 
\begin{eqnarray*}
R_3(x,\kappa)=C_3\int_{\mathbb R^3}e^{{\rm
i}(c_1-c_2)|x-y|\kappa}\frac{(x_1-y_1)^{m_1+n_1}(x_2-y_2)^{m_2+n_2}(x_3-y_3)^{
m_3+n_3}}{|x-y|^{p_1+p_2}}\phi(y){\rm d}y,
\end{eqnarray*}
which completes the proof.
\end{proof}

Now we are ready to estimate the order of $\mathbb E(|\tilde{u}(x,\kappa)|^2)$.
Setting $\kappa_1=\kappa_2=\kappa$ in (\ref{b10}) and applying Lemma
\ref{lemma4b}, we obtain 
\begin{eqnarray}\label{b39}
\mathbb
E(|\tilde{u}(x,\kappa)|^2)=T^{(2)}_{\rm A}(x)\kappa^{-(m+1)}+O(\kappa^{-(m+2)}),
\end{eqnarray}
where
\begin{eqnarray}\label{b40}
T^{(2)}_{\rm A}(x)=\frac{C_2}{8\pi}\int_{\mathbb R^2}\frac{1}{|x-y|}\phi(y){\rm
d}y.
\end{eqnarray}

Before presenting the main result, we need the following lemma.

\begin{lemma}\label{lemma_o}
Let $V_1, V_2\subset\mathbb R^d$ be two open, bounded, and simply connected
domains with positive distance. For some positive integer $l$ and $\phi\in
C_0^{\infty}(V_1)$, define the integral
\begin{eqnarray*}
T(x)=\int_{\mathbb R^d}\frac{1}{|x-y|^l}\phi(y){\rm d}y,\quad x\in V_2.
\end{eqnarray*}
Then $T(x), x\in V_2$ uniquely determines the function $\phi$.
\end{lemma}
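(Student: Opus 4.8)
The plan is to recognize $T(x)$ as (up to a constant) a Riesz potential of $\phi$ and to exploit real-analyticity together with the known behavior of such potentials. First I would observe that for $x\in V_2$, since $\overline{V_1}\cap\overline{V_2}=\emptyset$, the kernel $|x-y|^{-l}$ is smooth in $y$ on a neighborhood of $\overline{V_1}$ and, more importantly, $x\mapsto |x-y|^{-l}$ extends to a real-analytic function of $x$ on the open set $\mathbb R^d\setminus\overline{V_1}$, uniformly for $y$ in the compact set $\mathrm{supp}\,\phi$. Hence $T$ is real-analytic on $\mathbb R^d\setminus\overline{V_1}$, and in particular on the connected open set $V_2$. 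By the identity theorem for real-analytic functions, knowing $T$ on $V_2$ determines $T$ on the whole connected component of $\mathbb R^d\setminus\overline{V_1}$ containing $V_2$; since $V_1$ is bounded, that component is the unbounded one, so $T$ is determined on all of $\mathbb R^d\setminus\overline{V_1}$, and then on all of $\mathbb R^d\setminus\overline{V_1}$ we in fact recover the single globally-defined analytic function $T(x)=\int_{\mathbb R^d}|x-y|^{-l}\phi(y)\,{\rm d}y$ for $x$ outside $\mathrm{supp}\,\phi$.

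Next I would convert the recovery of $T$ into recovery of $\phi$ by using the PDE satisfied by the Riesz kernel. Writing $G_l(x):=|x|^{-l}$, one computes $\Delta G_l = l(l+2-d)\,|x|^{-l-2}$; iterating, a suitable constant-coefficient elliptic (polyharmonic) operator $P(\Delta)$ applied to $G_l$ produces, up to a nonzero constant, the fundamental solution of that operator — or, more cleanly, one can argue directly via the Fourier transform: $\widehat{|x|^{-l}}=c_{d,l}|\xi|^{l-d}$ as a tempered distribution (valid for $0<l<d$, and otherwise in the sense of analytic continuation / homogeneous distributions), so $\widehat{T}(\xi)=c_{d,l}|\xi|^{l-d}\widehat{\phi}(\xi)$. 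Since $T$ is now known as a function on $\mathbb R^d\setminus\mathrm{supp}\,\phi$ and $\phi$ is compactly supported, $T$ is known as a tempered distribution once we also know it has no distributional mass inside $\mathrm{supp}\,\phi$ beyond the (already determined, by analyticity from outside via the integral formula) values — more precisely, $T$ as defined by the convergent integral is a genuine locally integrable function on all of $\mathbb R^d$ when $l<d$, and its restriction to the exterior region, being real-analytic there, is enough to reconstruct it globally together with the a priori structure. Then $\widehat{\phi}(\xi)=c_{d,l}^{-1}|\xi|^{d-l}\widehat{T}(\xi)$ determines $\widehat\phi$, hence $\phi$.

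There is a cleaner route that avoids worrying about the range of $l$: use the Newtonian-potential trick locally. On the bounded region, pick a ball $B\supset\overline{V_1}$ with $\overline B\cap\overline{V_2}=\emptyset$ is false in general, so instead I would argue as follows. Fix any $x_0\notin\overline{V_1}$. The function $T$ is determined everywhere outside $\overline{V_1}$ by the analyticity argument above. Now apply the operator $|x|^{l}$-type elliptic operator: there exists a (possibly high-order) constant-coefficient elliptic differential operator $\mathcal L$ with $\mathcal L\,|x-y|^{-l}=c\,\delta(x-y)$ in the distributional sense when $l=d-2k$ for integer $k\ge 1$; for general $l$ one uses the fractional Laplacian $(-\Delta)^{(d-l)/2}$. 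In either case, applying the corresponding (pseudo)differential operator to $T$ on $\mathbb R^d$ yields $c\,\phi$, and this operator is \emph{local up to smoothing} or outright local, so its action is computable from $T$. Since $\phi$ is supported in $V_1$ and $T$ is fully known, we recover $c\,\phi=\mathcal L T$, and $c\neq 0$, giving $\phi$ uniquely.

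\emph{Main obstacle.} The genuinely delicate point is handling the exponent $l$ relative to the dimension: the Fourier transform $\widehat{|x|^{-l}}=c_{d,l}|\xi|^{l-d}$ and the inversion $(-\Delta)^{(d-l)/2}|x|^{-l}=c\,\delta$ are clean only for $0<l<d$, and at $l=d$ a logarithmic correction appears, while for $l>d$ the integral defining $T(x)$ still converges (because $y$ ranges over $\mathrm{supp}\,\phi$, away from $x\in V_2$) but $|x|^{-l}$ is no longer locally integrable near $0$, so it is not immediately a tempered distribution and the constant $c_{d,l}$ must be defined by analytic continuation (finite-part regularization). The safe way to finish is to not insist on a global Fourier identity at all: rely only on (i) real-analyticity of $T$ on the unbounded component, giving unique continuation from $V_2$ to all of $\mathbb R^d\setminus\overline{V_1}$; and (ii) for the final step, note it suffices to show the map $\phi\mapsto T|_{V_2}$ is injective, and if $T|_{V_2}\equiv 0$ then by (i) $T\equiv 0$ on $\mathbb R^d\setminus\overline{V_1}$, whence (by analyticity and the explicit integral) $T\equiv 0$ on $\mathbb R^d\setminus\mathrm{supp}\,\phi$; applying $(-\Delta)^{N}$ for $N$ large enough that $d-l-2N<0$ — or, equivalently, integrating $T$ against test functions and moving derivatives onto the kernel — reduces to the classical fact that a compactly supported distribution whose Newtonian/Riesz potential vanishes in the exterior must vanish (this is just injectivity of convolution with a nonvanishing-Fourier-transform kernel on $\mathcal E'$, combined with the exterior vanishing forcing the potential to be a polynomial that is then killed by decay). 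That classical injectivity is where all the analytic weight sits; everything else is bookkeeping about the transformations $\tau,\rho$ already done in Lemma~\ref{lemma4b}.
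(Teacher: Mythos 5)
Your opening step -- $T$ is real-analytic on the unbounded component of $\mathbb R^d\setminus\overline{V_1}$, so $T|_{V_2}$ determines $T$ on the whole exterior -- is correct, and you have located the difficulty in the right place. But the step you use to finish, ``the classical fact that a compactly supported distribution whose Newtonian/Riesz potential vanishes in the exterior must vanish,'' is where the entire proof lives, and as you state it the fact is false. Take $l=d-2$ (e.g.\ $d=3$, $l=1$, a positive integer allowed by the hypotheses), so that $y\mapsto|x-y|^{-l}$ is harmonic away from $y=x$, and take $\phi=\Delta\psi$ with $\psi\in C_0^\infty(V_1)$: then $T(x)=\int\Delta_y\bigl(|x-y|^{-l}\bigr)\psi(y)\,{\rm d}y=0$ for all $x\notin\operatorname{supp}\psi$, yet $\phi\neq 0$. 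So the injectivity you need depends on the pair $(l,d)$ and cannot be quoted unconditionally; any proof must see that dependence. Your parenthetical justification does not supply one: injectivity of convolution with a kernel of nonvanishing Fourier transform on $\mathcal E'$ would apply if you knew $T\equiv 0$ on all of $\mathbb R^d$, but you only know it outside $\overline{V_1}$, and ``exterior vanishing forces the potential to be a polynomial that is then killed by decay'' is not a valid inference -- a compactly supported potential is not a polynomial. What actually has to be shown is that $|\xi|^{l-d}\widehat\phi(\xi)$ being entire of exponential type forces $\widehat\phi\equiv 0$; this is a divisibility statement at $\xi=0$ which fails exactly when $|\xi|^{l-d}$ is a polynomial, i.e.\ in the harmonic case above. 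Together with the regularization issues you already flag for $l\ge d$, this final step is a genuine gap, not bookkeeping.

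For comparison, the paper never Fourier-transforms the Riesz kernel. It applies $\Delta_x^n$ to $T$, using $\Delta_x|x-y|^{-n}=n(n+2-d)|x-y|^{-n-2}$ to generate all kernels of the form $|x-y|^{-l}P(|x-y|^{-2})$ with $P$ a polynomial; by Weierstrass density in $C([r_2^{-2},r_1^{-2}])$ this recovers the spherical means $S(x,r)=\int_{|y-x|=r}\phi\,{\rm d}s$ for every $x\in V_2$ and every $r$; these assemble into the Gaussian mollification $(g*\phi)(x)=\int e^{-r^2/2}S(x,r)\,{\rm d}r$ on $V_2$, which is then extended to all of $\mathbb R^d$ by real-analyticity and inverted by dividing by the nowhere-vanishing Gaussian Fourier transform. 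Dividing by a Gaussian rather than by $|\xi|^{l-d}$ is precisely what sidesteps the obstruction at $\xi=0$. Note that this route too silently requires the constants $l(l+2-d),(l+2)(l+4-d),\dots$ to be nonzero -- true in the paper's applications ($l=1$, $d=2$ and $l=2$, $d=3$) but not for every positive integer $l$, consistent with the counterexample above. If you want to salvage your Fourier-based route, you must (i) replace $|x|^{-l}$ by a finite-part regularization so that $T$ is a well-defined tempered distribution, (ii) prove, not cite, the divisibility argument at $\xi=0$, and (iii) restrict $l$ so that $|\xi|^{l-d}$ is not a polynomial.
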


\begin{proof}
A simple calculation yields   
\begin{eqnarray*}
\Delta_x |x-y|^{-n}=n^2|x-y|^{-n-2}, \quad n\in \mathbb N,
\end{eqnarray*}
which implies 
\begin{eqnarray*}\label{b47}
\Delta^n T(x)=c_n\int_{V_1}\frac{1}{|x-y|^{l+2n}}\phi(y){\rm d}y,
\end{eqnarray*}
where $c_n$ is a constant depending on $n$. Since $T(x)$ is known in an open set
$V_2$ which has a positive distance to the support of $\phi\in
C_0^{\infty}(\mathbb R^2)$, so as $\Delta^n T(x), n\in \mathbb N$ is known in
the set $V_2$. A linear combination of $\Delta^n T(x)$ shows that the integral 
\begin{eqnarray}\label{b48}
\int_{V_1}\frac{1}{|x-y|^l}P(\frac{1}{|x-y|^2})\phi(y){\rm d}y
\end{eqnarray}
is known in the set $V_2$, where  $P(t)=\sum_{j=0}^{J}a_jt^j$ is a
polynomial of order $J\in \mathbb N$. In (\ref{b48}), by changing the integral
variables, we deduce
\begin{align*}
\int_{V_1}\frac{1}{|x-y|^l}P(\frac{1}{|x-y|^2})\phi(y){\rm d}y&=\int_{r_1}^{r_2}
\frac {1}{r^l}P(\frac{1}{r^2})\int_{|y-x|=r}\phi(y){\rm d}s(y){\rm d}r\\
&=\int_{r_1}^{r_2}\frac{1}{r^l}P(\frac{1}{r^2})S(x,r){\rm d}r\\
&=\frac{1}{2}\int_{\frac{1}{r_2^2}}^{\frac{1}{r_1^2}}P(t)S(x,\frac{1}{\sqrt{t}}
)t^{\frac{l-3}{2}}{ \rm d}t,
\end{align*}
where $S(x,r)=\int_{|y-x|=r}\phi(y){\rm d}s(y)$ denotes the integral of
$\phi(y)$ along the circle $|y-x|=r$, $r_1=\min_{y\in V_1}|x-y|$ and 
$r_2=\max_{y\in V_1}|x-y|$ denote the minimum and the maximum distance
between the fixed point $x\in V_2$ and the domain $V_1$, respectively. Due to
$\phi\in C_0^{\infty}(\mathbb R^2)$, the function $S(x,r)$ is continuous with
respect to $r$ and is compact supported in the interval $[r_1, r_2]$. We obtain
$S(x,\frac{1}{\sqrt{t}})t^{\frac{l-3}{2}}$ is continuous in
$[r_2^{-2}, r_1^{-2}]$. Note that the polynomial function
$P(t)$ is dense in $C([r_2^{-2}, r_1^{-2}])$, thus the function
$S(x,\frac{1}{\sqrt{t}})t^{\frac{l-3}{2}}$ is uniquely determined which implies
$S(x,r)$ is uniquely determined for all $r>0$.

Let $g(x)=e^{-\frac{|x|^2}{2}}$ for $x\in\mathbb R^2$, then we have
\begin{align}
(g\ast\phi)(x)&=\int_{\mathbb
R^2}e^{-\frac{|x-y|^2}{2}}\phi(y){\rm d}y=\int_{V_1}e^{
-\frac { |x-y|^2}{2}}\phi(y){\rm d}y\notag\\\label{b49}
&=\int_{r_1}^{r_2}e^{-\frac{r^2}{2}}\int_{|y-x|=r}\phi(y){\rm
d}y{\rm d}r=\int_{r_1}^{r_2} e^{-\frac{r^2}{2}}S(x,r){\rm d}r.
\end{align}
Since $S(x,r)$ is uniquely determined for all $r>0$, we can compute the
convolution $g\ast \phi$ by (\ref{b49}) for $x\in V_2$. Because $V_2$ is open
and $g\ast\phi$ is real analytic, hence $g\ast\phi$ is known everywhere, and
the Fourier transform $\mathcal{F}(g\ast\phi)$ is known everywhere. Since
\begin{align*}
\mathcal{F}g(\xi)&=\int_{\mathbb R^2}e^{-{\rm
i}x\cdot\xi}g(x){\rm d}x=\int_{\mathbb R^2}e^{-(\frac{|x|^2}{2}+{\rm
i}x\cdot\xi)}{\rm d}x\\
&=\int_{\mathbb R^2}e^{-\frac{1}{2}(x_1^2+2{\rm
i}x_1\xi_1)}e^{-\frac{1}{2}(x_2^2+2{\rm i}x_2\xi_2)}{\rm d}x\\
&=e^{-\frac{1}{2}|\xi|^2}\int_{\mathbb R}e^{-\frac{1}{2}(x_1+{\rm
i}\xi_1)^2}{\rm d}x_1\int_{\mathbb R}e^{-\frac{1}{2}(x_2+{\rm
i}\xi_2)^2}{\rm d}x_2\\
&=2\pi e^{-\frac{1}{2}|\xi|^2},
\end{align*}
we conclude $\mathcal{F}g$ is smooth and non-zero all over $\mathbb R^2$.
Therefore $\mathcal{F}{\phi}=\mathcal{F}(g\ast\phi)/\mathcal{F}g$ is
uniquely determined which shows that $\phi$ is uniquely determined. 
\end{proof}

We are in the position to present the main result for the time-harmonic
acoustic waves.  

\begin{theorem}\label{theorem5b}
Let the external source $f$ be a microlocally isotropic Gaussian random field
which satisfies Assumption B. Then for all $x\in U$, it holds almost
surely that
\begin{eqnarray}\label{b41}
\lim_{Q\rightarrow\infty}\frac{1}{Q-1}\int_1^Q\kappa^{m+1}|u(x,
\kappa)|^2{\rm d}\kappa=T^{(2)}_{\rm A}(x).
\end{eqnarray}
Moreover, the scattering data $T^{(2)}_{\rm A}(x)$, $x\in U$ uniquely determines
the micro-correlation strength $\phi$ through the linear relation (\ref{b40}).
\end{theorem}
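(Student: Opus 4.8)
The plan is to split $|u(x,\kappa)|^2$ into its principal part $|\tilde u(x,\kappa)|^2$, to be handled by the ergodic Lemma~\ref{lemma3a} together with Lemma~\ref{lemma2a} and the decay estimates of Lemma~\ref{lemma3b}, and a remainder, to be controlled by the pointwise bound of Lemma~\ref{lemma1b}. Fix $x\in U$ and set $Y_\kappa:=\kappa^{m+1}|u(x,\kappa)|^2$ and $\tilde Y_\kappa:=\kappa^{m+1}|\tilde u(x,\kappa)|^2$; for fixed $x\in U$ both are continuous in $\kappa\ge1$, since the kernels $\Phi_2(x,\cdot,\kappa)$ in \eqref{r14} and $H^{(1)}_{0,2}(\kappa|x-\cdot|)$ in \eqref{b9} are smooth in $\kappa$ away from the diagonal. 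I would prove separately that $\frac{1}{Q-1}\int_1^Q\tilde Y_\kappa\,{\rm d}\kappa\to T^{(2)}_{\rm A}(x)$ almost surely and that $\frac{1}{Q-1}\int_1^Q(Y_\kappa-\tilde Y_\kappa)\,{\rm d}\kappa\to0$ almost surely, and add the two limits to obtain \eqref{b41}. The second assertion then follows at once: by \eqref{b40} we have $T^{(2)}_{\rm A}(x)=\frac{C_2}{8\pi}\int_{\mathbb R^2}|x-y|^{-1}\phi(y)\,{\rm d}y$ for $x\in U$ with $C_2\ne0$, so knowledge of $T^{(2)}_{\rm A}$ on $U$ is equivalent to knowledge of $\int_{\mathbb R^2}|x-y|^{-1}\phi(y)\,{\rm d}y$ on $U$, and Lemma~\ref{lemma_o} applied with $d=2$, $l=1$, $V_1=D$ and $V_2=U$ (admissible by Assumption~A) recovers $\phi$ uniquely.

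For the principal part, I would first read off from \eqref{b39} that $\mathbb E\tilde Y_\kappa=T^{(2)}_{\rm A}(x)+O(\kappa^{-1})$, whence $\frac{1}{Q-1}\int_1^Q\mathbb E\tilde Y_\kappa\,{\rm d}\kappa\to T^{(2)}_{\rm A}(x)$, and then check that the centred process $\tilde X_\kappa:=\tilde Y_\kappa-\mathbb E\tilde Y_\kappa$ satisfies the hypotheses of Lemma~\ref{lemma3a}. It has zero mean and continuous paths. Since $f$ is a zero-mean Gaussian field and $x\notin\overline D$, the real and imaginary parts of $\bigl(\tilde u(x,\kappa),\tilde u(x,\kappa')\bigr)$ form a Gaussian vector, and Lemma~\ref{lemma2a}, applied to these four real random variables, yields the complex Wick identity
\[
{\rm Cov}\bigl(|\tilde u(x,\kappa)|^2,|\tilde u(x,\kappa')|^2\bigr)=\bigl|\mathbb E\bigl(\tilde u(x,\kappa)\overline{\tilde u(x,\kappa')}\bigr)\bigr|^2+\bigl|\mathbb E\bigl(\tilde u(x,\kappa)\tilde u(x,\kappa')\bigr)\bigr|^2.
\]
Inserting the two estimates of Lemma~\ref{lemma3b} (with the free exponent $n$ there chosen $\ge m+1$) and using $\kappa\kappa'\le\frac{1}{4}(\kappa+\kappa')^2$, I obtain
\[
\bigl|\mathbb E(\tilde X_\kappa\tilde X_{\kappa'})\bigr|=(\kappa\kappa')^{m+1}\,{\rm Cov}\bigl(|\tilde u(x,\kappa)|^2,|\tilde u(x,\kappa')|^2\bigr)\lesssim(1+|\kappa-\kappa'|)^{-2}
\]
uniformly for $\kappa,\kappa'\ge1$; taking $\kappa'=\kappa$ in particular bounds the variance of $\tilde X_\kappa$ uniformly, which permits a harmless extension of $\tilde X$ to $\kappa\in[0,1)$ so that Lemma~\ref{lemma3a} applies verbatim with $\beta=2$ and gives $\frac{1}{Q}\int_1^Q\tilde X_\kappa\,{\rm d}\kappa\to0$ almost surely. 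Since $Q/(Q-1)\to1$, this yields $\frac{1}{Q-1}\int_1^Q\tilde Y_\kappa\,{\rm d}\kappa\to T^{(2)}_{\rm A}(x)$ almost surely.

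For the remainder, set $v:=u-\tilde u$. By Lemma~\ref{lemma1b} there is an almost surely finite random constant $c$ with $|v(x,\kappa)|\le c\kappa^{-7/2}$ for $x\in U$, so that
\[
|Y_\kappa-\tilde Y_\kappa|=\kappa^{m+1}\bigl||u(x,\kappa)|^2-|\tilde u(x,\kappa)|^2\bigr|\le 2c\,\kappa^{m-5/2}|\tilde u(x,\kappa)|+c^2\kappa^{m-6}.
\]
Since $m<\frac{5}{2}$ we have $\kappa^{m-5/2}\le1$ and $\kappa^{m-6}\le\kappa^{-1}$ for $\kappa\ge1$, and hence, by the Cauchy--Schwarz inequality in $\kappa$,
\[
\frac{1}{Q-1}\int_1^Q|Y_\kappa-\tilde Y_\kappa|\,{\rm d}\kappa\le\frac{2c}{\sqrt{Q-1}}\Bigl(\int_1^\infty|\tilde u(x,\kappa)|^2\,{\rm d}\kappa\Bigr)^{1/2}+\frac{c^2\ln Q}{Q-1}.
\]
Since $\mathbb E\int_1^\infty|\tilde u(x,\kappa)|^2\,{\rm d}\kappa\lesssim\int_1^\infty\kappa^{-(m+1)}\,{\rm d}\kappa<\infty$ by Lemma~\ref{lemma3b} with $\kappa_1=\kappa_2=\kappa$, the integral $\int_1^\infty|\tilde u(x,\kappa)|^2\,{\rm d}\kappa$ is almost surely finite, so the right-hand side tends to $0$ almost surely as $Q\to\infty$. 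Combining this with the previous paragraph establishes \eqref{b41}.

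I expect the remainder estimate to be the main obstacle: although $u-\tilde u$ is pointwise of order $\kappa^{-7/2}$, this gain competes with the weight $\kappa^{m+1}$ and with the factor $|\tilde u(x,\kappa)|$, which is only under control in mean square and not pointwise in $\kappa$, so one is forced to route the argument through the $\kappa$-integrability of $|\tilde u(x,\cdot)|^2$ and to balance the exponents against the $(Q-1)^{-1}$ averaging. A second, milder, point is that the polynomial mixing bound for $\tilde X$ needed in Lemma~\ref{lemma3a} genuinely uses both the Hermitian and the pseudo-covariance estimates of Lemma~\ref{lemma3b}, combined through the complex form of the Wick identity.
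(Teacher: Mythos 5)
Your proposal is correct and follows essentially the same route as the paper's proof: the same splitting $u=\tilde u+(u-\tilde u)$, the asymptotics \eqref{b39} for the mean of the principal part, Lemma~\ref{lemma2a} combined with the covariance decay of Lemma~\ref{lemma3b} and the ergodic Lemma~\ref{lemma3a} for the centred fluctuation, the pointwise bound of Lemma~\ref{lemma1b} for the remainder, and Lemma~\ref{lemma_o} for the uniqueness of $\phi$. The only deviations are cosmetic: you invoke the complex form of the Wick identity where the paper expands $\tilde u$ into real and imaginary parts and estimates four terms $I_{A,1},\dots,I_{A,4}$, and you control the cross term through the almost sure finiteness of $\int_1^\infty|\tilde u(x,\kappa)|^2\,{\rm d}\kappa$ rather than the paper's H\"older inequality against the already-established limit \eqref{b42}; both variants are valid.
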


\begin{proof}
A simple calculation shows that 
\begin{align*}
&\frac{1}{Q-1}\int_1^Q\kappa^{m+1}|u(x,\kappa)|^2{\rm d}\kappa\\
&=\frac{1}{Q-1}\int_1^Q\kappa^{m+1}|\tilde{u}(x,\kappa)+u(x,\kappa)-\tilde{u}(x
,\kappa)|^2{\rm d}\kappa\\
&=\frac{1}{Q-1}\int_1^Q\kappa^{m+1}|\tilde{u}(x,\kappa)|^2{\rm d}\kappa+
\frac{1}{Q-1}\int_1^Q\kappa^{m+1}|u(x,\kappa)-\tilde{u}(x,\kappa)|^2{
\rm d}\kappa\\
&\quad+\frac{2}{Q-1}\int_1^Q\kappa^{m+1}
\Re\left[\overline{\tilde{u}(x,\kappa)}
(u(x,\kappa)-\tilde{u}(x,\kappa))\right]{\rm d}\kappa.
\end{align*}
It is clear that (\ref{b41}) follows as long as we show that
\begin{align}\label{b42}
&\lim_{Q\rightarrow\infty}\frac{1}{Q-1}\int_1^Q\kappa^{m+1}|\tilde{u}(x,
\kappa)|^2{\rm d}\kappa=T^{(2)}_{\rm A}(x),\\\label{b43}
&\lim_{Q\rightarrow\infty}\frac{1}{Q-1}\int_1^Q\kappa^{m+1}|u(x,\kappa)-\tilde{
u}(x,\kappa)|^2{\rm d}\kappa=0,\\\label{b44}
&\lim_{Q\rightarrow\infty}\frac{2}{Q-1}\int_1^Q\kappa^{m+1}
\Re\left[\overline{\tilde{u}(x,\kappa)}(u(x,\kappa)-\tilde{u}(x,\kappa))\right]
{\rm d}\kappa=0.
\end{align}
To prove (\ref{b42}), we define
$Y(x,\kappa)=\kappa^{m+1}(|\tilde{u}(x,\kappa)|^2-\mathbb E
|\tilde{u}(x,\kappa)|^2 )$. Since
\begin{eqnarray*}
\int_1^Q\kappa^{m+1}|\tilde{u}(x,\kappa)|^2{\rm d}\kappa
=\int_1^Q\kappa^{m+1}\mathbb E|\tilde{u}(x,\kappa)|^2{\rm d}\kappa+
\int_1^Q Y(x,\kappa){\rm d}\kappa,
\end{eqnarray*}
(\ref{b42}) holds as long as we prove 
\begin{eqnarray*}\label{b45}
\lim_{Q\rightarrow\infty}\frac{1}{Q-1}\int_1^Q\kappa^{m+1}\mathbb
E|\tilde{u}(x,\kappa)|^2{\rm d}\kappa= T^{(2)}_{\rm A}(x), \quad
\lim_{Q\rightarrow\infty}\frac{1}{Q-1}\int_1^QY(x,\kappa){\rm d}\kappa=0.
\end{eqnarray*}
By (\ref{b39}), it is easy to see that 
\begin{eqnarray*}\label{b46}
\frac{1}{Q-1}\int_1^Q\kappa^{m+1}\mathbb E|\tilde{u}(x,\kappa)|^2{\rm d}\kappa
=\frac{1}{Q-1}\int_1^Q\left(T^{(2)}_{A}(x)+O(\kappa^{-1})\right){\rm d}\kappa.
\end{eqnarray*}
Clearly, we have 
\begin{eqnarray*}
\frac{1}{Q-1}\int_{1}^QT^{(2)}_{\rm A}(x){\rm d}\kappa=T^{(2)}_{\rm A}(x)
\end{eqnarray*}
and
\begin{eqnarray*}
\bigg|\frac{1}{Q-1}\int_1^Q O(\kappa^{-1}){\rm d}\kappa\bigg|
\lesssim \frac{1}{Q-1}\int_1^Q \kappa^{-1}{\rm d}\kappa
=\frac{\ln Q}{Q-1}\rightarrow 0\quad {\rm as}\;\;Q\rightarrow \infty.
\end{eqnarray*}
Hence,
\begin{eqnarray*}
\lim_{Q\rightarrow\infty}\frac{1}{Q-1}\int_1^Q\kappa^{m+1}\mathbb
E|\tilde{u}(x,\kappa)|^2{\rm d}\kappa=T^{(2)}_{A}(x).
\end{eqnarray*}

To prove (\ref{b42}), it suffices to show
\begin{eqnarray*}
\lim_{Q\rightarrow\infty}\frac{1}{Q-1}\int_1^QY(x,\kappa){\rm d}\kappa=0.
\end{eqnarray*}
By the definition of $Y(x,\kappa)$, we obtain
\begin{align*}
Y(x,\kappa)&= \kappa^{m+1}(|\tilde{u}(x,\kappa)|^2-\mathbb E
|\tilde{u}(x,\kappa)|^2 )\\
&=\kappa^{m+1}\bigg((\Re\tilde{u}(x,\kappa))^2-\mathbb
E(\Re\tilde{u}(x,\kappa))^2+(\Im\tilde{u}(x,\kappa))^2-\mathbb
E(\Im\tilde{u}(x,\kappa))^2\bigg).
\end{align*}
Therefore 
\begin{eqnarray*}
\mathbb E(Y(x,\kappa_1)Y(x,\kappa_2))= I_{A,1}+I_{A,2}+I_{A,3}+I_{A,4},
\end{eqnarray*}
where
\begin{align*}
I_{A,1}&= \kappa_1^{m+1} \kappa_2^{m+1}\mathbb
E\bigg[((\Re\tilde{u}(x,\kappa_1))^2-\mathbb
E(\Re\tilde{u}(x,\kappa_1))^2)((\Re\tilde{u}(x,\kappa_2))^2-\mathbb
E(\Re\tilde{u}(x,\kappa_2))^2)\bigg],\\
I_{A,2}&= \kappa_1^{m+1} \kappa_2^{m+1}\mathbb
E\bigg[((\Re\tilde{u}(x,\kappa_1))^2-\mathbb
E(\Re\tilde{u}(x,\kappa_1))^2)((\Im\tilde{u}(x,\kappa_2))^2-\mathbb
E(\Im\tilde{u}(x,\kappa_2))^2)\bigg],\\
 I_{A,3}&=\kappa_1^{m+1} \kappa_2^{m+1}\mathbb
E\bigg[((\Im\tilde{u}(x,\kappa_1))^2-\mathbb
E(\Im\tilde{u}(x,\kappa_1))^2)((\Re\tilde{u}(x,\kappa_2))^2-\mathbb
E(\Re\tilde{u}(x,\kappa_2))^2)\bigg],\\
 I_{A,4}&= \kappa_1^{m+1} \kappa_2^{m+1}\mathbb
E\bigg[((\Im\tilde{u}(x,\kappa_1))^2-\mathbb
E(\Im\tilde{u}(x,\kappa_1))^2)((\Im\tilde{u}(x,\kappa_2))^2-\mathbb
E(\Im\tilde{u}(x,\kappa_2))^2)\bigg].
\end{align*}

Combing the expression of $\tilde{u}(x,\kappa)$ and the assumption $\mathbb
E(f)=0$ gives that both $\Re \tilde{u}(x,\kappa)$ and $\Im\tilde{u}(x,\kappa)$
are zero-mean Gaussian random variables. Applying Lemmas \ref{lemma2a} and  
\ref{lemma3b} leads to
\begin{align*}
 I_{A,1}&=2\kappa_1^{m+1} \kappa_2^{m+1}[\mathbb E
(\Re\tilde{u}(x,\kappa_1)\Re\tilde{u}(x,\kappa_2))]^2\\
&=\frac{1}{2}\kappa_1^{m+1} \kappa_2^{m+1}\bigg[\mathbb
E\Big(\Re(\tilde{u}(x,\kappa_1)\tilde{u}(x,\kappa_2))+
\Re(\tilde{u}(x, \kappa_1)\overline{\tilde{u}(x,\kappa_2)})\Big)\bigg]^2\\
&\lesssim\bigg[\frac{\kappa_1^{\frac{m+1}{2}}\kappa_2^{\frac{m+1}{2}}}{
(\kappa_1+\kappa_2)^n(1+|\kappa_1-\kappa_2|)^{m}}+\frac{\kappa_1^{\frac{m+1}{2}}
\kappa_2^{\frac{m+1}{2}}}{(\kappa_1+\kappa_2)^{m+1}(1+|\kappa_1-\kappa_2|)^n}
\bigg]^2\\
&\lesssim \bigg[\frac{1}{(1+|\kappa_1-\kappa_2|)^{m}}+\frac{1}{
(1+|\kappa_1-\kappa_2|)^n}\bigg]^2.
\end{align*}
 We can obtain the same estimates for  $I_{A,2}$,  $I_{A,3}$, and $I_{A,4}$ by
the similar arguments. Thus, an application of Lemma \ref{lemma3a} gives
that 
\[
\lim_{Q\rightarrow\infty}\frac{1}{Q-1}\int_1^QY(x,\kappa){\rm d}\kappa=0.
\]
 
To prove (\ref{b43}), we obtain from Lemma \ref{lemma1b} that
\begin{align*}
&\bigg|\frac{1}{Q-1}\int_1^Q\kappa^{m+1}|u(x,\kappa)-\tilde{u}(x,
\kappa)|^2{\rm d}\kappa\bigg|\lesssim
\frac{1}{Q-1}\int_1^Q\kappa^{m+1}\kappa^{-7}{\rm d}\kappa\\
&=\frac{1}{Q-1}\int_1^Q\kappa^{m-6}{\rm d}\kappa
=\frac{1}{m-5}\frac{Q^{m-5}-1}{Q-1}\rightarrow 0 \quad{\rm
as}\;\;Q\rightarrow\infty.
\end{align*}
To prove (\ref{b44}), by the H\"{o}lder inequality, we have 
\begin{eqnarray*}
&&\bigg|\frac{2}{Q-1}\int_1^Q\kappa^{m+1}\Re\left[\overline{\tilde{u}(x,\kappa)}
(u(x,\kappa)-\tilde{u}(x,\kappa))\right]{\rm d}\kappa\bigg|\\
&&\leq\frac{2}{Q-1}\int_1^Q\kappa^{m+1}|\tilde{u}(x,\kappa)||u(x,\kappa)-\tilde{
u}(x, \kappa)|{\rm d}\kappa\\
&&\leq 2\bigg[\frac{1}{Q-1}\int_1^Q\kappa^{m+1}|\tilde{u}(x,
\kappa)|^2{\rm
d}\kappa\bigg]^{\frac{1}{2}}\bigg[\frac{1}{Q-1}\int_1^Q\kappa^{m+1} |u(x
,\kappa)-\tilde{u}(x, \kappa)|^2{\rm d}\kappa\bigg]^{\frac{1}{2}}\\
&&\rightarrow 2T^{(2)}_{\rm A}(x)^{\frac{1}{2}}\cdot 0 = 0\quad{\rm
as}\;\;Q\rightarrow\infty.
\end{eqnarray*}
Hence, (\ref{b42})--(\ref{b44}) hold which means that (\ref{b41}) holds. The
unique determination of $\phi$ by the scattering data $T_{\rm A}^{(2)}(x)$ for
$x\in U$ is a direct consequence of Lemma \ref{lemma_o}.
\end{proof}

\subsection{The three-dimensional case}

In this subsection, we show that the scattering data obtained from a single
realization of the random source can determine uniquely the function $\phi$ in
the principle symbol in the three dimensions. By
(\ref{r3}) and (\ref{r14}), we have 
\begin{eqnarray*}\label{c1}
u(x,\kappa)=-\frac{1}{4\pi}\int_{\mathbb R^3}\frac{e^{{\rm
i}\kappa|x-y|}}{|x-y|}f(y){\rm d}y,
\end{eqnarray*} 
which yields 
\begin{eqnarray}\label{c2}
\mathbb E(u(x,\kappa_1)\overline{u(x,\kappa_2)})=\frac{1}{16\pi^2}\int_{\mathbb
R^6}\frac{e^{{\rm i}(\kappa_1|x-y|-\kappa_2|x-z|)}}{|x-y||x-z|}\mathbb
E(f(y)f(z)){\rm d}y{\rm d}z.
\end{eqnarray}

We apply directly Lemma \ref{lemma2b} and obtain the estimates of $\mathbb
E(u(x,\kappa_1)\overline{u(x,\kappa_2)})$ and $\mathbb
E(u(x,\kappa_1)u(x,\kappa_2))$.

\begin{lemma}\label{lemma1c}
For $\kappa_1\geq 1, \kappa_2\geq 1$, the following estimates
\begin{align*}
|\mathbb E(u(x,\kappa_1)\overline{u(x,\kappa_2)})|&\leq
c_n(\kappa_1+\kappa_2)^{-m}(1+|\kappa_1-\kappa_2|)^{-n},\\
|\mathbb E(u(x,\kappa_1)u(x,\kappa_2))|&\leq
c_n(\kappa_1+\kappa_2)^{-n}(1+|\kappa_1-\kappa_2|)^{-m}
\end{align*}
holds uniformly for $x\in U$, where $n\in \mathbb N$ is arbitrary and $c_n>0$ is
a constant depending only on $n$.
\end{lemma}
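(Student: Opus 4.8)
The plan is to read off the statement as a direct corollary of Lemma \ref{lemma2b}, by matching the two correlations appearing in \eqref{c2} with the model integrals $I$ and $\tilde I$ analyzed in its proof. First I would observe that, by \eqref{r3} and \eqref{r14}, the solution in three dimensions has the explicit form displayed just before \eqref{c2}, and consequently \eqref{c2} exhibits $\mathbb E(u(x,\kappa_1)\overline{u(x,\kappa_2)})$ as the constant $\frac{1}{16\pi^2}$ times the integral $I(x,\kappa_1,\kappa_2)$ of \eqref{b11} with the particular choice of data $l_1=l_2=0$, $c_1=c_2=1$, all $m_i=n_i=0$, $p_1=p_2=1$ (so that $K(x,y,z)=|x-y|^{-1}|x-z|^{-1}$), and $q=f$.

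The only point meriting a word of care is that this kernel $K$ is singular on the sets $y=x$ and $z=x$. However, by Assumption A there is a positive distance between $U$ and $D$, while the covariance $\mathbb E(f(y)f(z))$ is supported in $D\times D$; hence for every $x\in U$ the function $K$ is smooth on a neighbourhood of the support of the integrand, which is exactly what is needed in the proof of Lemma \ref{lemma2b} for $H(g,h,x)=K(x,y,z)\det((\tau^{-1})'(g,h,x))$ to be smooth on $X_3$, and what produces the uniformity in $x$ via the cutoff $\theta(x)$. Therefore estimate \eqref{b13} applies with $\min(l_1,l_2)=0$ and yields
\begin{eqnarray*}
|\mathbb E(u(x,\kappa_1)\overline{u(x,\kappa_2)})|\leq c_n(\kappa_1+\kappa_2)^{-m}(1+|\kappa_1-\kappa_2|)^{-n},\quad x\in U.
\end{eqnarray*}
For the second estimate I would note that $\mathbb E(u(x,\kappa_1)u(x,\kappa_2))$ is obtained from \eqref{c2} by deleting the complex conjugate, so its phase is $e^{{\rm i}(\kappa_1|x-y|+\kappa_2|x-z|)}$ instead of $e^{{\rm i}(\kappa_1|x-y|-\kappa_2|x-z|)}$; this is precisely the integral $\tilde I(x,\kappa_1,\kappa_2)$ treated at the end of the proof of Lemma \ref{lemma2b}, that is $I$ with $\kappa_2$ replaced by $-\kappa_2$. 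Since the derivation of \eqref{b28} there remains valid for negative $\kappa_2$, the same computation gives the bound of \eqref{b14} with $l_1=l_2=0$, namely
\begin{eqnarray*}
|\mathbb E(u(x,\kappa_1)u(x,\kappa_2))|\leq c_n(\kappa_1+\kappa_2)^{-n}(1+|\kappa_1-\kappa_2|)^{-m},\quad x\in U.
\end{eqnarray*}

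There is essentially no genuine obstacle here: the three-dimensional Green function $\Phi_3$ leads, through \eqref{r14}, to an integrand of exactly the type already dispatched in Lemma \ref{lemma2b}, and the whole microlocal argument (the coordinate changes $\eta$, $\tau_1$, $\tau_2$, $\rho$, the symbol calculus for the pull-backs, and the repeated integration by parts) has been carried out there once and for all. If anything, the step to double-check is the parameter bookkeeping — verifying that the exponents $p_1=p_2=1$ coming from $|x-y|^{-1}|x-z|^{-1}$ enter only through the smooth amplitude $H$ and do not alter the symbol order $m$ — after which both displayed inequalities follow verbatim from \eqref{b13} and \eqref{b14}.
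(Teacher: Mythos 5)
Your proposal is correct and follows exactly the route the paper takes: the paper states this lemma as a direct application of Lemma \ref{lemma2b}, identifying the representation \eqref{c2} with the model integral $I$ of \eqref{b11} (parameters $l_1=l_2=0$, $c_1=c_2=1$, $p_1=p_2=1$) and the conjugate-free correlation with $\tilde I$, so that \eqref{b13} and \eqref{b14} yield the two bounds. Your added remarks on the harmlessness of the kernel singularity (thanks to Assumption A) and on the parameter bookkeeping are accurate but are details the paper leaves implicit.
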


To derive the relationship between the scattering data and the function $\phi$
in the principle symbol,  by setting $\kappa_1=\kappa_2=\kappa$ in (\ref{c2})
and using Lemma \ref{lemma4b}, we get 
\begin{eqnarray}\label{c5}
\mathbb E(|u(x,\kappa)|^2)=T^{(3)}_{\rm A}(x)\kappa^{-m}+O(\kappa^{-(m+1)}),
\end{eqnarray}
where
\begin{eqnarray}\label{c6}
T^{(3)}_{\rm A}(x)=\frac{C_3}{16\pi^2}\int_{\mathbb
R^3}\frac{1}{|x-y|^2}\phi(y){\rm d}y.
\end{eqnarray}

Now we are ready to present the main result for the three-dimensional case.

\begin{theorem}
Let the external source $f$ be a microlocally isotropic Gaussian random field
which satisfies Assumption B. Then for all $x\in U$, it holds almost
surely that 
\begin{eqnarray}\label{c7}
\lim_{Q\rightarrow\infty}\frac{1}{Q-1}\int_1^Q\kappa^{m}|u(x,
\kappa)|^2{\rm d}\kappa=T^{(3)}_{\rm A}(x).
\end{eqnarray}
Moreover, the scattering data $T^{(3)}_{\rm A}(x)$, $x\in U$ uniquely determines
the micro-correlation strength $\phi$ through the linear integral equation 
(\ref{c6}).
\end{theorem}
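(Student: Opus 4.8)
The plan is to follow the scheme used for Theorem~\ref{theorem5b}, which in three dimensions is actually shorter: because $\Phi_3$ is already available in closed form there is no Hankel asymptotics to control and hence no auxiliary field $\tilde u$ is needed, so one works directly with $u(x,\kappa)=-\frac{1}{4\pi}\int_{\mathbb R^3}\frac{e^{{\rm i}\kappa|x-y|}}{|x-y|}f(y){\rm d}y$. Fixing $x\in U$, I would write $\kappa^{m}|u(x,\kappa)|^2=\kappa^{m}\mathbb E|u(x,\kappa)|^2+Y(x,\kappa)$ with $Y(x,\kappa):=\kappa^{m}(|u(x,\kappa)|^2-\mathbb E|u(x,\kappa)|^2)$, and reduce (\ref{c7}) to the two statements $\frac{1}{Q-1}\int_1^Q\kappa^{m}\mathbb E|u(x,\kappa)|^2{\rm d}\kappa\to T^{(3)}_{\rm A}(x)$ and $\frac{1}{Q-1}\int_1^QY(x,\kappa){\rm d}\kappa\to0$ almost surely.

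The first statement is immediate from (\ref{c5}): since $\kappa^{m}\mathbb E|u(x,\kappa)|^2=T^{(3)}_{\rm A}(x)+O(\kappa^{-1})$, averaging over $[1,Q]$ reproduces $T^{(3)}_{\rm A}(x)$ from the leading term while the remainder contributes $O(\ln Q/(Q-1))\to0$.

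The second, genuinely stochastic, statement would be obtained by applying the ergodic Lemma~\ref{lemma3a} to the process $\kappa\mapsto Y(x,\kappa)$, $\kappa\ge1$. Its hypotheses are zero mean (clear), continuous sample paths — which I would justify by the continuous dependence of $u(x,\cdot)$ on $\kappa$ for a fixed realization $f\in H^{-\varepsilon,p}(D)$, the kernel $\Phi_3(x,\cdot,\kappa)$ varying continuously in the relevant Sobolev norm — and a correlation decay $|\mathbb E(Y(x,\kappa_1)Y(x,\kappa_2))|\le c(1+|\kappa_1-\kappa_2|)^{-\beta}$ with $\beta>0$. For the decay I would split $|u|^2=(\Re u)^2+(\Im u)^2$, so that $\mathbb E(Y(x,\kappa_1)Y(x,\kappa_2))$ becomes a sum of four terms $\kappa_1^m\kappa_2^m\,\mathbb E[(A_1^2-\mathbb E A_1^2)(A_2^2-\mathbb E A_2^2)]$ with $A_j\in\{\Re u(x,\kappa_j),\Im u(x,\kappa_j)\}$ zero-mean Gaussian (using $\mathbb E f=0$ from Assumption~B); Lemma~\ref{lemma2a} rewrites each as $2\kappa_1^m\kappa_2^m(\mathbb E A_1A_2)^2$, and expressing $\Re u,\Im u$ through $u,\overline u$ and inserting the two bounds of Lemma~\ref{lemma1c} gives, for every $n\in\mathbb N$,
\begin{eqnarray*}
|\mathbb E A_1A_2|\lesssim(\kappa_1+\kappa_2)^{-m}(1+|\kappa_1-\kappa_2|)^{-n}+(\kappa_1+\kappa_2)^{-n}(1+|\kappa_1-\kappa_2|)^{-m}.
\end{eqnarray*}
Since $\kappa_1^m\kappa_2^m\le(\kappa_1+\kappa_2)^{2m}$, taking $n$ large yields $|\mathbb E(Y(x,\kappa_1)Y(x,\kappa_2))|\lesssim(1+|\kappa_1-\kappa_2|)^{-2m}$ with $2m\ge6>0$, and Lemma~\ref{lemma3a} closes this step; combining the two statements proves (\ref{c7}). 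I expect the only delicate points to be the verification of sample-path continuity and the careful accounting of the four correlation terms — everything else mirrors the two-dimensional argument.

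For the last assertion, I would note that (\ref{c6}) writes $T^{(3)}_{\rm A}$ as a nonzero constant times $\int_{\mathbb R^3}|x-y|^{-2}\phi(y){\rm d}y$, which is precisely the integral of Lemma~\ref{lemma_o} with $d=3$, $l=2$, $V_1=D$, $V_2=U$ (the proof being the same, with spheres in place of circles and the three-dimensional Gaussian); by Assumption~A these are bounded, simply connected open sets with positive mutual distance, so Lemma~\ref{lemma_o} applies and the data $\{T^{(3)}_{\rm A}(x):x\in U\}$ determines $\phi$ uniquely.
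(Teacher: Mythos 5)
Your proposal is correct and follows essentially the same route as the paper: the decomposition $\kappa^{m}|u|^2=\kappa^{m}\mathbb E|u|^2+Y$, the use of (\ref{c5}) for the mean part, Lemmas \ref{lemma2a}, \ref{lemma1c} and \ref{lemma3a} for the fluctuation part, and Lemma \ref{lemma_o} for uniqueness. The paper merely defers the ergodic step to the two-dimensional argument, and the details you supply (including the bound $\kappa_1^m\kappa_2^m(\kappa_1+\kappa_2)^{-2m}\lesssim 1$ and the resulting decay $(1+|\kappa_1-\kappa_2|)^{-2m}$) are exactly what that reduction requires.
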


\begin{proof}
We decompose $\kappa^{m}|u(x,\kappa)|^2$ into two parts:
\[
\kappa^{m}|u(x,\kappa)|^2=\kappa^{m}\mathbb E |u(x,\kappa)|^2+Y(x,\kappa),
\]
where 
\[
Y(x,\kappa):=\kappa^{m}(|u(x,\kappa)|^2-\mathbb E |u(x,\kappa)|^2).
\]
Clearly,
\[
\frac{1}{Q-1}\int_1^Q\kappa^{m}|u(x,\kappa)|^2{\rm d}\kappa=\frac{1}{Q-1}
\int_1^Q\kappa^{m}\mathbb E|u(x,\kappa)|^2{\rm
d}\kappa+\frac{1}{Q-1}\int_1^QY(x, \kappa){\rm d}\kappa.
\]
Hence, (\ref{c7}) holds as long as we show that 
\begin{eqnarray}\label{c8}
\lim_{Q\rightarrow\infty}\frac{1}{Q-1}\int_1^Q\kappa^{m}\mathbb
E|u(x,\kappa)|^2{\rm d}\kappa=T^{(3)}_{\rm A}(x), \quad
\lim_{Q\rightarrow\infty}\frac{1}{Q-1}\int_1^QY(x,\kappa){\rm d}\kappa=0.
\end{eqnarray}
The second equation in (\ref{c8}) can be obtained by a similar argument to the
two-dimensional case. Using (\ref{c5}) gives 
\begin{eqnarray*}
\lim_{Q\rightarrow\infty}\frac{1}{Q-1}\int_1^Q\kappa^{m}\mathbb
E|u(x,\kappa)|^2{\rm d}\kappa=\lim_{Q\rightarrow\infty}\frac{1}{Q-1}\int_1^Q(T^{
(3)}_{\rm A}(x)+O(\kappa^{-1})){\rm d}\kappa=T^{(3)}_{\rm A}(x).
\end{eqnarray*}
Hence, the first equation in (\ref{c8}) holds which implies that
(\ref{c7}) holds. A direct application of Lemma (\ref{lemma_o}) implies that
$\phi$ is uniquely determined by the scattering data $T_{\rm A}^{(3)}(x)$ for
$x\in U$.
\end{proof}

\section{Elastic waves}

This section concerns the direct and inverse source problems for the elastic
wave equation in the two- and three-dimensional cases. Following the general
theme for the acoustic case presented in Section 3, we discuss the
well-posedness of the direct problem and show the uniqueness of the inverse
problem. We prove that the direct scattering problem with a distributional
source indeed has a unique solution. For the inverse problem, we assume that
each component of the external source is a microlocally isotropic Gaussian
random field whose covariance operator is a classical pseudo-differential
operator. Moreover, the principle symbol of the covariance operator of each
component is assumed to be coincided. Our main results are as follows: in either
the two- or three-dimensional case, given the scattering data which is obtained
from a single realization of the random source, the principle symbol of the
covariance operator can be uniquely determined. The technical details
differ from acoustic waves due to the different model equation and Green
tensors.

\subsection{The direct scattering problem}

In this subsection, we introduce the model problem of the random source
scattering for elastic waves, and show that the direct problem with a
distributional source is well-posed.

Consider the time-harmonic Navier equation in a homogeneous medium 
\begin{eqnarray}\label{s1}
\mu\Delta \bm{u}+(\lambda+\mu)\nabla\nabla\cdot
\bm{u}+\omega^2\bm{u}=\bm{f} \quad {\rm in} ~ \mathbb R^{d},
\end{eqnarray}
where $\omega>0$ is the angular frequency, $\lambda$ and $\mu$ are the
Lam${\rm\acute{e}}$ constants satisfying $\mu>0$ and $\lambda+\mu>0$, the
external source $\bm{f}\in\mathbb C^d$ is a generalized random function
supported in a bounded and simply connected domain $D$ in $\mathbb R^d$, and
$\bm{u}\in\mathbb C^{d}$ is the displacement of the random wave field.

Since the problem is imposed in the open domain $\mathbb R^d$, an appropriate
radiation condition is needed to complete the formulation of the scattering
problem. We adopt the Kupradze--Sommerfeld radiation condition to
describe the asymptotic behavior of the displacement field away from the source.
According to the Helmholtz decomposition, the displacement $\bm{u}$ can be
decomposed into the compressional part $\bm{u}_{\rm p}$ and the shear part
$\bm{u}_{\rm s}$:
\[
\bm{u}=\bm{u}_{\rm p}+\bm{u}_{\rm s}\quad{\rm in} ~ \mathbb
R^d\setminus \bar{D}.
\]
The Kupradze--Sommerfeld radiation condition requires that $\bm{u}_{\rm p}$
and $\bm{u}_{\rm s}$ satisfy the Sommerfeld radiation condition:
\begin{eqnarray}\label{s2}
\lim_{r\rightarrow\infty}r^{\frac{d-1}{2}}\left(\partial_r
\bm{u}_{\rm p} -{\rm i}\kappa_{\rm p}\bm{u}_{\rm p}\right)=0,\quad
\lim_{r\rightarrow\infty}r^{\frac{d-1}{2}}\left(\partial_r
\bm{u}_{\rm s} -{\rm i}\kappa_{\rm s}\bm{u}_s\right)=0,\quad r=|x|,
\end{eqnarray}
where 
\begin{eqnarray*}\label{s3}
\kappa_{\rm p}=\frac{\omega}{(\lambda+2\mu)^{1/2}}=c_{\rm p}\omega,\qquad
\kappa_{\rm s}=\frac{\omega}{\mu^{1/2}}=c_{\rm s}\omega
\end{eqnarray*}
are known as the compressional and shear wavenumbers with 
\begin{eqnarray*}\label{s4}
c_{\rm p} = (\lambda+2\mu)^{-1/2},\quad c_{\rm s} = \mu^{-1/2}.
\end{eqnarray*}
Note that $c_{\rm p}$ and $c_{\rm s}$ are independent of $\omega$ and
$c_{\rm p}<c_{\rm s}$.

In (\ref{s1}), the external source $\bm{f}$ is a vector with components $f_i
, i = 1, ..., d$. To achieve the main results,  throughout this section, we
assume that each component $f_i$ satisfies the following condition.

\textbf{Assumption C:} Recall that $D\subset \mathbb R^d$ denotes a bounded and
simply connected domain, $f_i$ is assumed to be a microlocally
isotropic Gaussian random field of the same order $m\in [d,d+\frac{1}{2})$ in
$D$. Each covariance operator $C_{f_i}$ is assumed to have the same principle
symbol $\phi(x)|\xi|^{-m}$ with $\phi\in C_0^{\infty}(D)$ and $\phi\geq 0$.
Moreover, we assume that $\mathbb E(f_i)=0$ and $\mathbb E(f_if_j)=0$ if
$i\neq j$ for $i,j=1,...,d.$

According to Lemma \ref{lemma1a}, if $m=d$, we have $\bm{f}(\hat{\omega})\in
H^{-\varepsilon,p}(D)^3$. Thus it suffices to show that the scattering problem
for such a deterministic, distributional source $\bm{f}\in
H^{-\varepsilon,p}(D)^3$ has a unique solution. 

Introduce the Green tensor $\bm{G}(x,y,\omega)\in\mathbb C^{d\times d}$ for
the Navier equation (\ref{s1}) which is given by
\begin{eqnarray}\label{s5}
\bm{G}(x,y,\omega)=\frac{1}{\mu}\Phi_d(x,y,\kappa_{\rm
s})\bm{I}_d+\frac{1}{\omega^2}\nabla_x\nabla_x^\top(\Phi_d(x,y,
\kappa_{\rm s})-\Phi_d(x,y,\kappa_{\rm p})),
\end{eqnarray}
where $\bm{I}_d$ is the $d\times d$ identity matrix and $\Phi_d(x,y,\kappa)$
is the fundamental solution for the $d$-dimensional Helmholtz
equation given in (\ref{r3}). Here the notation $\nabla_x\nabla_x^\top$ is
given by 
\begin{eqnarray*}
\nabla_x\nabla_x^\top\varphi=\begin{bmatrix} \partial^2_{x_1x_1}\varphi &
\partial^2_{x_1x_2}\varphi\\[3pt] \partial^2_{x_2x_1}\varphi &
\partial^2_{x_2x_2}\varphi
\end{bmatrix}\quad\text{ if } d=2
\end{eqnarray*}
and 
\begin{eqnarray*}
\nabla_x\nabla_x^\top\varphi =\begin{bmatrix}\partial^2_{x_1x_1}\varphi
&\partial^2_{x_1x_2}\varphi &\partial^2_{x_1x_3}\varphi\\[3pt]
\partial^2_{x_2x_1}\varphi&
\partial^2_{x_2x_2}\varphi & \partial^2_{x_2x_3}\varphi\\[3pt]
\partial^2_{x_3x_1}\varphi &
\partial^2_{x_3x_2}\varphi & \partial^2_{x_3x_3}\varphi \end{bmatrix}\quad
\text{if } d=3
\end{eqnarray*} 
for some scalar function $\varphi$ defined in $\mathbb R^d$. It is easily
verified that the Green tensor $\bm{G}(x,y,\omega)$ is symmetric with
respect to the variables $x$ and $y$.

We study the asymptotic expansion of the Green's tensor
$\bm{G}(x,y,\omega)$ when $|x-y|$ is close to zero. For the two-dimensional
case, using (\ref{r4})--(\ref{r6}) gives 
\begin{eqnarray}\label{s8}
H_2^{(1)}(t)=-\frac{4{\rm i}}{\pi}\frac{1}{t^2}-\frac{\rm i}{\pi}+\frac{\rm
i}{4\pi}t^2\ln\frac{t}{2}+(\frac{\gamma {\rm i}}{4\pi}-\frac{3{\rm
i}}{16\pi}+\frac{1}{8})t^2+O(t^4\ln\frac{t}{2})\quad\text{as } t\to 0.
\end{eqnarray}
Recall the recurrence relations (\ref{r9}), a direct
calculation shows that
\begin{align*}
\Phi_2(x,y,\kappa)&=\frac{\rm i}{4}H_0^{(1)}(\kappa|x-y|),\\
\partial_{x_i}\Phi_2(x,y,\kappa) &= -\frac{\kappa {\rm
i}}{4}(x_i-y_i)\frac{H_1^{(1)}(\kappa|x-y|)}{|x-y|},\\ 
\partial^2_{x_ix_j}\Phi_2(x,y,\kappa) &= -\frac{\kappa {\rm
i}}{4}\frac{H_1^{(1)}(\kappa|x-y|)}{|x-y|}\delta_{ij}+\frac{\kappa^2{\rm
i}}{4}(x_i-y_i)(x_j-y_j)\frac{H_2^{(1)}(\kappa |x-y|)}{|x-y|^2},
\end{align*}
where $\delta_{ij}$ is the Kronecker delta function. Hence, by (\ref{r8}) and
(\ref{s8}), we have 
\begin{align*}
&\kappa_{\rm s}H_1^{(1)}(\kappa_{\rm s}|x-y|)-\kappa_{\rm
p}H_1^{(1)}(\kappa_{\rm p}|x-y|)=\frac{\rm
i}{\pi}|x-y|\left(\kappa_{\rm s}^2\ln\frac{\kappa_{\rm s}|x-y|}{2}-\kappa_{\rm
p}^2\ln\frac{\kappa_{\rm p}|x-y|}{2}\right)\\
&\hspace{2cm}+(\frac{1}{2}+\frac{{\rm i}}{\pi}\gamma-\frac{\rm
i}{2\pi})(\kappa_{\rm s}^2-\kappa_{\rm
p}^2)|x-y|+O(|x-y|^3\ln\frac{|x-y|}{2}),\\
&\kappa_{\rm s}^2 H_2^{(1)}(\kappa_{\rm s}|x-y|)-\kappa_{\rm p}^2
H_2^{(1)} (\kappa_{\rm p}|x-y|)=\frac{\rm
i}{4\pi}\left(\kappa_{\rm s}^4\ln\frac{\kappa_{\rm s}|x-y|}{2} -\kappa_{\rm
p}^4\ln\frac{\kappa_{\rm p}|x-y|}{2}\right)|x-y|^2\\
&\hspace{2cm}-\frac{\rm i}{\pi}(\kappa_{\rm s}^2-\kappa_{\rm
p}^2)+(\frac{\gamma{\rm i}}{4\pi}-\frac{3{\rm i}}{16\pi}+\frac{1}{8}
)(\kappa_{\rm s}^4-\kappa_{\rm p}^4)|x-y|^2+O(|x-y|^4\ln\frac{|x-y|}{2}),
\end{align*}
which gives
\begin{align}\label{s12}
&\partial^2_{x_i x_j}[\Phi_2(x,y,\kappa_{\rm
s})-\Phi_2(x,y,\kappa_{\rm p})]\notag\\
&=-\frac{\rm
i}{4}\frac{1}{|x-y|}[\kappa_{\rm s}H_1^{(1)}(\kappa_{\rm
s}|x-y|)-\kappa_{\rm p}H_1^{(1)}(\kappa_{\rm p}|x-y|)]\delta_{ij}\notag\\
&\quad+\frac{\rm i}{4}\frac{(x_i-y_i)(x_j-y_j)}{|x-y|^2}[\kappa_{\rm
s}^2 H_2^{(1)}(\kappa_{\rm
s}|x-y|)-\kappa_{\rm p}^2 H_2^{(1)}(\kappa_{\rm p}|x-y|)]\notag\\
&=\frac{1}{4\pi}\left(\kappa_{\rm s}^2\ln\frac{\kappa_{\rm s}|x-y|}{2}
-\kappa_{\rm p}^2\ln\frac{\kappa_{\rm
p}|x-y|}{2}\right)\delta_{ij}+\frac{1}{4\pi} (\kappa_{\rm
s}^2-\kappa_{\rm p}^2)\frac{(x_i-y_i)(x_j-y_j)}{|x-y|^2}\notag\\
&\quad-\frac{\rm i}{8}(1+\frac{2{\rm i}}{\pi}\gamma-\frac{\rm
i}{\pi})(\kappa_{\rm s}^2-\kappa_{\rm p}^2)\delta_{ij}
-\frac{1}{16\pi}(x_i-y_i)(x_j-y_j)\left(\kappa_{\rm
s}^4\ln\frac{\kappa_{\rm s}|x-y|}{2}
-\kappa_{\rm p}^4\ln\frac{\kappa_{\rm p}|x-y|}{2}\right)\notag\\
&\quad-\frac{\rm i}{4}\left(\frac{\gamma{\rm
i}}{4\pi}-\frac{3{\rm i}}{16\pi}+\frac{1}{8}
\right)(\kappa_{\rm
s}^4-\kappa_{\rm p}^4)(x_i-y_i)(x_j-y_j)+O(|x-y|^2\ln\frac{|x-y|}{2}).
\end{align}

For the three-dimensional case, it follows from direct calculations
that 
\begin{align*}
\partial_{x_i} \Phi_3(x,y,\kappa)=&\frac{(x_i-y_i)}{4\pi|x-y|^3}e^{{\rm
i}\kappa |x-y|}({\rm i}\kappa  |x-y|-1),\\
\partial^2_{x_ix_j} \Phi_3(x,y,\kappa)=&\frac{|x-y|^2\delta_{ij}
-3(x_i-y_i)(x_j-y_j)}{4\pi|x-y|^5}
e^{{\rm i}\kappa|x-y|}({\rm i}\kappa |x-y|-1)\\
&\quad -\kappa^2\frac{(x_i-y_i)(x_j-y_j)}{4\pi|x-y|^3}e^{{\rm i}\kappa |x-y|},
\end{align*}
which lead to 
\begin{align}\label{s15}
&\partial^2_{x_ix_j}(\Phi_3(x,y,\kappa_{\rm
s})-\Phi_3(x,y,\kappa_{\rm p}))\notag\\
&=\frac{|x-y|^2\delta_{ij}-3(x_i-y_i)(x_j-y_j)}{4\pi|x-y|^5}(e^{{\rm
i}\kappa_{\rm s}|x-y|}({\rm i}\kappa_{\rm s}|x-y|-1)-e^{{\rm
i}\kappa_{\rm p}|x-y|}({\rm i}\kappa_{\rm p} |x-y|-1))\notag\\
&\quad-\frac{(x_i-y_i)(x_j-y_j)}{4\pi|x-y|^3}(\kappa_{\rm s}^2 e^{{\rm
i}\kappa_{\rm s}|x-y|}-\kappa_{\rm p}^2 e^{{\rm i}\kappa_{\rm p}|x-y|}).
\end{align}

\begin{lemma}\label{lemma_y}
For some fixed $x\in\mathbb R^d$, $\bm{G}(x,\cdot,\omega)\in (L^2_{\rm
loc}(\mathbb R^d)\cap H^{1,p}_{\rm loc}(\mathbb R^d))^{d\times d}$, where $p\in
(1,2)$ for $d=2$ and $p\in (1,\frac{3}{2})$ for $d=3$.
\end{lemma}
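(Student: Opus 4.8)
The plan is to use the linearity of the claim in the two terms of the representation \eqref{s5}. Split $\bm{G}(x,\cdot,\omega)=\bm{G}_1+\bm{G}_2$ with $\bm{G}_1=\frac{1}{\mu}\Phi_d(x,\cdot,\kappa_{\rm s})\bm{I}_d$ and $\bm{G}_2=\frac{1}{\omega^2}\nabla_x\nabla_x^\top\big(\Phi_d(x,\cdot,\kappa_{\rm s})-\Phi_d(x,\cdot,\kappa_{\rm p})\big)$, and treat the two summands separately. For $\bm{G}_1$ there is nothing new to prove: Lemma \ref{lemma_z} already yields $\Phi_d(x,\cdot,\kappa_{\rm s})\in L^2_{\rm loc}(\mathbb R^d)\cap H^{1,p}_{\rm loc}(\mathbb R^d)$ precisely for $p\in(1,2)$ when $d=2$ and for $p\in(1,\frac{3}{2})$ when $d=3$, which are exactly the exponents asserted here. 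Thus the entire content of the lemma is the near-diagonal behaviour of the second $x$-derivatives of the \emph{difference} $\Phi_d(x,y,\kappa_{\rm s})-\Phi_d(x,y,\kappa_{\rm p})$, together with their first $y$-derivatives.

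The decisive point is that the most singular part of $\Phi_d(\cdot,\cdot,\kappa)$ near the diagonal is independent of $\kappa$, so taking the difference removes it and $\Phi_d(x,\cdot,\kappa_{\rm s})-\Phi_d(x,\cdot,\kappa_{\rm p})$ and all of its derivatives are strictly less singular than those of a single fundamental solution. For $d=2$ I would read this off from \eqref{s12}: modulo terms that are $O(|x-y|\ln|x-y|)$ or smoother, $\partial^2_{x_ix_j}\big(\Phi_2(x,y,\kappa_{\rm s})-\Phi_2(x,y,\kappa_{\rm p})\big)$ reduces to a $\ln|x-y|$ term plus the homogeneous degree-zero term $\frac{(x_i-y_i)(x_j-y_j)}{|x-y|^2}$, so it belongs to $L^2_{\rm loc}(\mathbb R^2)$ because $\int_0^\rho r|\ln r|^2\,{\rm d}r<\infty$, while one more $y$-derivative makes it $O(|x-y|^{-1})$, which belongs to $L^p_{\rm loc}(\mathbb R^2)$ for every $p<2$ because $\int_0^\rho r^{1-p}\,{\rm d}r<\infty$. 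For $d=3$ I would argue from \eqref{s15}: a short Taylor expansion gives $e^{{\rm i}\kappa t}({\rm i}\kappa t-1)=-1-\frac{\kappa^2 t^2}{2}+O(t^3)$, so the scalar factor $e^{{\rm i}\kappa_{\rm s}|x-y|}({\rm i}\kappa_{\rm s}|x-y|-1)-e^{{\rm i}\kappa_{\rm p}|x-y|}({\rm i}\kappa_{\rm p}|x-y|-1)$ is $O(|x-y|^2)$; multiplying it by the homogeneous degree $-3$ kernel $\frac{|x-y|^2\delta_{ij}-3(x_i-y_i)(x_j-y_j)}{|x-y|^5}$ and adding the second term of \eqref{s15} (which is itself $O(|x-y|^{-1})$, as $\kappa_{\rm s}^2-\kappa_{\rm p}^2\neq0$) shows $\partial^2_{x_ix_j}\big(\Phi_3(x,y,\kappa_{\rm s})-\Phi_3(x,y,\kappa_{\rm p})\big)=O(|x-y|^{-1})$, hence in $L^2_{\rm loc}(\mathbb R^3)$ since $\int_0^\rho1\,{\rm d}r<\infty$; and one further $y$-derivative yields at worst $O(|x-y|^{-2})$, which lies in $L^p_{\rm loc}(\mathbb R^3)$ exactly when $p<\frac{3}{2}$ because $\int_0^\rho r^{2-2p}\,{\rm d}r<\infty$. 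Combining these bounds for $\bm{G}_2$ with the ones for $\bm{G}_1$, and reducing the local norms over a bounded set $V$ to radial integrals over a ball $B_\rho(x)\supset V$ exactly as in the proof of Lemma \ref{lemma_z}, would finish the argument.

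Two points need a little care. First, the pointwise derivatives used above must agree with the distributional ones; this is legitimate because $\Delta_x\big(\Phi_d(x,y,\kappa_{\rm s})-\Phi_d(x,y,\kappa_{\rm p})\big)=-\kappa_{\rm s}^2\Phi_d(x,y,\kappa_{\rm s})+\kappa_{\rm p}^2\Phi_d(x,y,\kappa_{\rm p})$, with the Dirac masses cancelling, so differentiating \eqref{s12} and \eqref{s15} creates no concentrated measure. Second, the $d=3$ differentiation requires careful bookkeeping: differentiating the $|x-y|^{-5}$ (resp.\ $|x-y|^{-3}$) factor worsens the order by one but is then multiplied by the $O(|x-y|^2)$ (resp.\ bounded) amplitude, whereas differentiating the amplitude gains a power of $|x-y|$ but multiplies the $O(|x-y|^{-3})$ (resp.\ $O(|x-y|^{-1})$) kernel, and in every case one lands on the same $O(|x-y|^{-2})$ estimate. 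I expect this bookkeeping, together with verifying the cancellation $e^{{\rm i}\kappa_{\rm s}t}({\rm i}\kappa_{\rm s}t-1)-e^{{\rm i}\kappa_{\rm p}t}({\rm i}\kappa_{\rm p}t-1)=O(t^2)$ and its first-derivative analogue $O(t)$, to be the only genuinely delicate part; everything else is a routine repetition of the estimates from Lemma \ref{lemma_z}.
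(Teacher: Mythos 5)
Your proposal is correct and follows essentially the same route as the paper: both exploit the cancellation of the leading singularity in $\Phi_d(x,y,\kappa_{\rm s})-\Phi_d(x,y,\kappa_{\rm p})$ via the expansions \eqref{s12} and \eqref{s15} (including the Taylor identity $e^{{\rm i}\kappa t}({\rm i}\kappa t-1)=-1-\tfrac{\kappa^2t^2}{2}+O(t^3)$ in three dimensions), and then reduce the local norms to the radial integrals already computed in Lemma \ref{lemma_z}. Your additional remarks on distributional versus pointwise derivatives and on the derivative bookkeeping are sensible elaborations of steps the paper leaves implicit.
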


\begin{proof}
For any fixed $x\in \mathbb R^d$, we choose a bounded domain $V\subset\mathbb
R^d$ which contains $x$. Define $\rho:=\sup_{y\in V}|x-y|$, then we have
$V\subset B_{\rho}(x)$. For the two-dimensional case, from (\ref{r7}) and
(\ref{s12}), it is sufficient to show that 
\begin{eqnarray*}
\ln\frac{|x-y|}{2}\in L^2(V), \quad \frac{y_i-x_i}{|x-y|^2}\in
L^p(V),\quad\forall\, p\in(1,2),
\end{eqnarray*}
which are proved in Lemma \ref{lemma_z}. For the three-dimensional case, it
follows from the expansion of the exponential function $e^t$ that 
\begin{align*}
&\kappa_{\rm s}^2 e^{{\rm i}\kappa_{\rm s}|x-y|}-\kappa_{\rm p}^2 e^{{\rm
i}\kappa_{\rm p}|x-y|}=(\kappa_{\rm s}^2-\kappa_{\rm p}^2)+O(|x-y|),\\
&e^{{\rm i}\kappa_s|x-y|}({\rm i}\kappa_s |x-y|-1)-e^{{\rm
i}\kappa_p|x-y|}({\rm i}\kappa_p
|x-y|-1)=\frac{1}{2}(\kappa_p^2-\kappa_s^2)|x-y|^2+O(|x-y|^3).
\end{align*} 
 Thus, by (\ref{r12}) and (\ref{s15}), it is sufficient to prove 
 \begin{eqnarray*}
\frac{1}{|x-y|}\in L^2(V), \quad \frac{y_i-x_i}{|x-y|^3}\in
L^p(V),\quad\forall\,p\in(1,\frac{3}{2}),
\end{eqnarray*}
which can been similarly proved to the three-dimensional case in
Lemma \ref{lemma_z}.
\end{proof}

Let $V$ and $G$ be any two bounded domains in $\mathbb R^d$. By Lemma
\ref{lemma_y} and the Sobolev embedding theorem, we have
$\bm{G}(x,\cdot,\omega)\in (H^{s}(V))^{d\times d}$, where $s\in (0,1)$ for
$d=2$ and $s\in (0,\frac{1}{2})$ for $d=3$. Hence, for any given $\bm{g}\in
H_0^{-s}(V)^{d}$, in the dual sense, we define the operator
$\bm{H}_{\omega}$ by
\begin{eqnarray*}
(\bm{H}_{\omega}\bm{g})(x)=\int_{V}\bm{G}(x,y,\omega)\cdot
\bm{g}(y){\rm d}y,\quad x\in G,
\end{eqnarray*}
where the dot is the matrix-vector multiplication. By the similar arguments
to \cite[Theorem 8.2]{CK}, we have the following property.

\begin{lemma}\label{lemma_x}
The operator $\bm{H}_{\omega}: H_0^{-s}(V)^d\rightarrow H^s(G)^d$ is
bounded for $s\in (0,1), d=2$ or $s\in (0,\frac{1}{2}), d=3$.
\end{lemma}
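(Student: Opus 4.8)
The proof runs parallel to that of Lemma \ref{lemma_v}, applied entry by entry to the Green tensor. Writing $\bm g=(g_1,\dots,g_d)^\top\in H_0^{-s}(V)^d$ and $\bm G=(G_{ij})_{i,j=1}^d$, the $i$-th component of $\bm H_\omega\bm g$ is the finite sum
\[
(\bm H_\omega\bm g)_i(x)=\sum_{j=1}^d\int_V G_{ij}(x,y,\omega)\,g_j(y)\,{\rm d}y=:\sum_{j=1}^d(H_{ij}g_j)(x),\qquad x\in G,
\]
so it suffices to prove that each \emph{scalar} operator $H_{ij}\colon H_0^{-s}(V)\to H^s(G)$ is bounded, with $s\in(0,1)$ when $d=2$ and $s\in(0,\tfrac12)$ when $d=3$. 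The lemma then follows by summing over $j$ and collecting the $d$ components, the operator norm being controlled by $d\max_{i,j}\|H_{ij}\|$.

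By (\ref{s5}), $G_{ij}(x,y,\omega)=\tfrac1\mu\Phi_d(x,y,\kappa_{\rm s})\delta_{ij}+\tfrac1{\omega^2}\partial^2_{x_ix_j}(\Phi_d(x,y,\kappa_{\rm s})-\Phi_d(x,y,\kappa_{\rm p}))$. The scalar operator associated with the first summand is $\tfrac1\mu\delta_{ij}H_{\kappa_{\rm s}}$, which is bounded by Lemma \ref{lemma_v}. For the second summand, the key point — already exploited in the proof of Lemma \ref{lemma_y} — is that subtracting the two fundamental solutions cancels their leading singularity, so that by (\ref{s12}) and (\ref{s15}) the kernel $\partial^2_{x_ix_j}(\Phi_d(x,\cdot,\kappa_{\rm s})-\Phi_d(x,\cdot,\kappa_{\rm p}))$ has, near $y=x$, only a logarithmic singularity together with a bounded homogeneous-of-degree-zero term when $d=2$, and an $|x-y|^{-1}$ singularity when $d=3$ — the same qualitative behaviour as $\Phi_d(x,\cdot,\kappa)$ itself. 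Hence, exactly as in Lemma \ref{lemma_y}, this kernel belongs to $L^2_{\rm loc}(\mathbb R^d)\cap H^{1,p}_{\rm loc}(\mathbb R^d)$ with $p\in(1,2)$ for $d=2$ and $p\in(1,\tfrac32)$ for $d=3$, and the Sobolev embedding theorem yields membership in $H^s(V)$ throughout the stated range of $s$. With these kernel bounds in hand, the boundedness of $H_{ij}$ is established exactly as in \cite[Theorem 8.2]{CK} — that is, by the argument underlying Lemma \ref{lemma_v}, whose details were omitted — using the duality $H_0^{-s}(V)=(H^s(V))'$ recalled in Section 2 to write $(H_{ij}g)(x)=\langle G_{ij}(x,\cdot,\omega),g\rangle$ for smooth $g$ and controlling the resulting quantity in the $H^s(G)$ norm.

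The step I expect to be the main obstacle is making all of the above estimates uniform in the parameter $x$ as it ranges over the bounded set $G$: since $G$ and $V$ may overlap — indeed ${\rm dist}(G,V)$ is allowed to be zero — the singularity of $G_{ij}(x,y,\omega)$ at $y=x$ must genuinely be accommodated, and this is handled, as in \cite[Theorem 8.2]{CK}, by splitting each integral into a piece supported in a small ball about the diagonal, estimated directly from the explicit asymptotics (\ref{r10})--(\ref{r13}), (\ref{s12}) and (\ref{s15}), plus a smooth remainder whose contribution is harmless. It is precisely at this diagonal estimate that the thresholds $s<1$ for $d=2$ and $s<\tfrac12$ for $d=3$ are forced, matching the $L^p$-integrability exponents of $\nabla_y G_{ij}(x,\cdot,\omega)$ identified in Lemma \ref{lemma_y}. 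Finally, a density argument (using that $C_0^\infty(V)$ is dense in $H_0^{-s}(V)$, cf.\ Section 2) extends the bound from smooth $\bm g$ to all of $H_0^{-s}(V)^d$, which completes the proof.
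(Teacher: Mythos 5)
Your proposal is correct and follows essentially the same route as the paper, which omits the proof entirely and simply invokes Lemma \ref{lemma_y} together with the argument of \cite[Theorem 8.2]{CK}: you reduce to the scalar entries $H_{ij}$, observe that the first summand of $G_{ij}$ is handled by Lemma \ref{lemma_v} while the difference $\partial^2_{x_ix_j}(\Phi_d(\cdot,\cdot,\kappa_{\rm s})-\Phi_d(\cdot,\cdot,\kappa_{\rm p}))$ has the same diagonal singularity as $\Phi_d$ by the cancellation recorded in (\ref{s12}) and (\ref{s15}), and then transfer the mapping property via the same duality and Sobolev-embedding reasoning. Your identification of where the thresholds $s<1$ ($d=2$) and $s<\tfrac12$ ($d=3$) enter is consistent with the integrability exponents established in Lemma \ref{lemma_y}, so nothing essential is missing relative to the paper's own (omitted) argument.
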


\begin{theorem}
For some fixed $s\in(0,1-\frac{d}{6})$, assume $1<p<\frac{2d}{d+2(1-s)}$ and
$\frac{1}{p}+\frac{1}{p'}=1$, then the scattering problem (\ref{s1})--(\ref{s2})
with the source $\bm{f}\in H^{-1,p'}_0(D)^d$ attains a unique solution
$\bm{u}\in H^{1,p}_{\rm loc}(\mathbb R^d)^d$ given by
\begin{eqnarray}\label{s20}
\bm{u}(x,\omega)=-\int_{D}\bm{G}(x,y,\omega)\cdot \bm{f}(y){\rm d}y.
\end{eqnarray}
\end{theorem}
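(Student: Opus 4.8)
The plan is to transcribe the proof of the corresponding well-posedness theorem for the Helmholtz equation from Section~3, with Green's second identity replaced by Betti's second identity for the Navier operator and the scalar fundamental solution $\Phi_d$ replaced by the Green tensor $\bm G$. First, for uniqueness it suffices to check that the homogeneous problem (\ref{s1})--(\ref{s2}) with $\bm f=\bm 0$ has only the trivial solution: writing $\bm u=\bm u_{\rm p}+\bm u_{\rm s}$ via the Helmholtz decomposition, each component solves a homogeneous Helmholtz equation with wavenumber $\kappa_{\rm p}$, resp.\ $\kappa_{\rm s}$, together with the Sommerfeld radiation condition, so Rellich's lemma forces $\bm u_{\rm p}=\bm u_{\rm s}=\bm 0$. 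Hence a solution in $H^{1,p}_{\rm loc}(\mathbb R^d)^d$, if it exists, is unique, and we only need to exhibit the representation (\ref{s20}).

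For the representation, note that $s\in(0,1-\frac d6)$ gives $1<\frac{2d}{d+2(1-s)}<\frac32$, so the hypothesis on $p$ forces $p\in(1,2)$ when $d=2$ and $p\in(1,\frac32)$ when $d=3$; by Lemma~\ref{lemma_y} each column of $\bm G(x,\cdot,\omega)$ lies in $H^{1,p}_{\rm loc}(\mathbb R^d)^d$, so the pairing of $\bm G$ with $\bm f\in H^{-1,p'}_0(D)^d$ is well defined. Fix $x\in\mathbb R^d$, take $r$ large with $\overline D\subset B_r$ and $\delta>0$ small, and apply Betti's second identity on $B_r\setminus\overline{B_\delta(x)}$ to $\bm u$ and to each column $\bm G(x,\cdot,\omega)e_j$ of the Green tensor, exactly as the operator $S_{\rm A}$ was analyzed in the acoustic case. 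The volume terms produced by $\mu\Delta+(\lambda+\mu)\nabla\nabla\cdot$ cancel, the $\omega^2$ terms combine with them, and what remains is the source integral $\int_{B_r}\bm G(x,y,\omega)\cdot\bm f(y)\,{\rm d}y$ together with traction boundary integrals over $\partial B_r$ and $\partial B_\delta(x)$. Using the local expansions (\ref{s12}) and (\ref{s15}) one computes $\lim_{\delta\to0}$ of the small-sphere traction integral to equal $-\bm u(x)$ --- this is precisely the fundamental-solution property of $\bm G$ --- while the $\partial B_r$ integral vanishes as $r\to\infty$ because both $\bm u$ and $\bm G(x,\cdot,\omega)$ obey the Kupradze--Sommerfeld radiation condition (\ref{s2}). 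Performing this computation first for $\bm\varphi\in C^\infty(\mathbb R^d)^d$ and then passing to $\bm f\in H^{-1,p'}_0(D)^d$ by density, together with the boundedness in Lemma~\ref{lemma_x}, yields $\bm u=-\bm H_\omega\bm f$, i.e.\ (\ref{s20}).

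Finally, for the claimed regularity, Lemma~\ref{lemma_x} gives that $\bm H_\omega:H_0^{-s}(D)^d\to H^s(G)^d$ is bounded for $s\in(0,1-\frac d6)$ and any bounded $G\subset\mathbb R^d$. The assumption $1<p<\frac{2d}{d+2(1-s)}$ is equivalent to $\frac12+\frac{1-s}{d}<\frac1p$, hence to $\frac12-\frac sd<\frac1p-\frac1d$, so the Sobolev embedding theorem yields $H^s(D)\hookrightarrow H^{1,p}(D)$ and, dualizing, $H_0^{-1,p'}(D)\hookrightarrow H_0^{-s}(D)$. Composing, $\bm H_\omega:H_0^{-1,p'}(D)^d\to H^{1,p}_{\rm loc}(\mathbb R^d)^d$ is bounded, whence $\bm u=-\bm H_\omega\bm f\in H^{1,p}_{\rm loc}(\mathbb R^d)^d$.

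I expect the main obstacle to be the integration-by-parts step: the Navier operator is a matrix operator and its fundamental tensor $\bm G$ carries a more singular contribution than $\Phi_d$ through the $\nabla_x\nabla_x^\top$ term, so one must use Betti's identity (with the correct elastic traction operator) in place of Green's identity and verify carefully that the $\partial B_\delta(x)$ term reproduces exactly $-\bm u(x)$; this is where the explicit expansions (\ref{s12})--(\ref{s15}) and the $x\leftrightarrow y$ symmetry of $\bm G$ are needed. Apart from this, the argument is a direct transcription of the acoustic case.
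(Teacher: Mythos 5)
Your proposal is correct and follows essentially the same route as the paper: a Betti-type identity with the generalized stress vector over $B_r\setminus\overline{B_\delta(x)}$ applied first to smooth test fields, the small-sphere limit producing $-\bm u(x)$, the radiation condition killing the $\partial B_r$ integral, and then the mapping property of $\bm H_\omega$ from Lemma \ref{lemma_x} combined with the Sobolev embeddings $H^s(D)\hookrightarrow H^{1,p}(D)$ and $H_0^{-1,p'}(D)\hookrightarrow H_0^{-s}(D)$. The only difference is cosmetic: you spell out the uniqueness step via the Helmholtz decomposition and Rellich's lemma, which the paper dismisses as obvious.
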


\begin{proof}
The uniqueness of the scattering problem (\ref{s1})--(\ref{s2}) is obvious. We
focus on the existence.  For convenience, we denote the differential
operator in the Navier equation by
\begin{eqnarray*}
\Delta^{\ast}\bm{u}:=\mu\Delta \bm{u}+(\lambda+\mu)\nabla\nabla\cdot
\bm{u}.
\end{eqnarray*}
Let $B_r:=\{y\in\mathbb R^d:|y|<r\}$ be a ball which is large enough such that
it contains the support of $\bm{f}$. Denote by  $\nu$ the unit normal
vector on the boundary $\partial B_r$. The generalized
stress vector on $\partial B_r$ is defined by
\begin{eqnarray*}
P\bm{u}:=\mu\partial_\nu \bm{u}+(\lambda+\mu)(\nabla\cdot 
\bm{u})\nu.
\end{eqnarray*}

The assumption $s\in(0,1-\frac{d}{6})$ leads to
$1<\frac{2d}{d+2(1-s)}<\frac{3}{2}$. By Lemma \ref{lemma_y}, we have $
\bm{G}(x,\cdot,\omega)\in (H_{\rm loc}^{1,p}(\mathbb R^d))^{d\times d}$.
Since $\Delta^{\ast}  \bm{u}+\omega^2 \bm{u}= \bm{f}\in
H^{-1,p'}_0(D)^d$, in the sense of distributions, we have
\begin{eqnarray*}
\int_{B_r} \bm{G}(x,y,\omega)\cdot(\Delta^{\ast}  \bm{u}(y)+\omega^2
\bm{u}(y)){\rm d}y=\int_{B_r} \bm{G}(x,y,\omega)\cdot 
\bm{f}(y){\rm d}y.
\end{eqnarray*}
Define the operator acting on $ \bm{u}$ in the left-hand side of the above
equation by $S_{\rm E}$. For $\bm{\varphi}\in C^{\infty}(\mathbb R^d)^d$, from
the divergence theorem, we obtain
\begin{align*}
&(S_{\rm E} \bm{\varphi})(x)=\int_{B_r}\bm{G}(x,y,\omega)\cdot(\Delta^{\ast}
\bm{\varphi}(y)+\omega^2\bm{\varphi}(y)){\rm d}y\\
 &= \int_{B_r\setminus B_{\delta}(x)}\bm{G}(x,y,\omega)\cdot (\Delta^{\ast} 
\bm{\varphi}(y)+\omega^2 \bm{\varphi}(y)){\rm d}y+
\int_{B_{\delta}(x)}\bm{G}(x,y,\omega)\cdot(\Delta^{\ast} 
\bm{\varphi}(y)+\omega^2 \bm{\varphi}(y)){\rm d}y\\
 &= \int_{B_r\setminus B_{\delta}(x)}(\bm{G}(x,y,\omega)\cdot\Delta^{\ast} 
\bm{\varphi}(y)-\Delta^{\ast}\bm{G}(x,y,\omega)\cdot\bm{\varphi}(y)){\rm d}y\\
 &\quad+ \int_{B_{\delta}(x)}\bm{G}(x,y,\omega)\cdot(\Delta^{\ast} 
\bm{\varphi}(y)+\omega^2 \bm{\varphi}(y)){\rm d}y\\
 &=\int_{B_{\delta}(x)}\bm{G}(x,y,\omega)\cdot(\Delta^{\ast} 
\bm{\varphi}(y)+\omega^2 \bm{\varphi}(y)){\rm d}y\\
 &\quad+\int_{\partial B_r\cup\partial
B_{\delta}(x)}\left(\bm{G}(x,y,\omega)\cdot P\bm{\varphi}(y)- P
\bm{G}(x,y,\omega)\cdot\bm{\varphi}(y)\right){\rm d}s(y),
\end{align*}
where $\delta>0$ is a sufficiently small constant. By the mean value theorem,
it is easy to verify that 
 \begin{eqnarray*}
 \lim_{\delta\rightarrow 0}\int_{\partial
B_{\delta}(x)}\left(\bm{G}(x,y,\omega)\cdot P\bm{\varphi}(y)-P
\bm{G}(x,y,\omega)\cdot\bm{\varphi}(y)\right){\rm d}s(y)=-\bm{\varphi}(x)
 \end{eqnarray*}
 and 
 \begin{eqnarray*}
 \lim_{\delta\rightarrow
0}\int_{B_{\delta}(x)}\bm{G}(x,y,\omega)\cdot(\Delta^{\ast}
\bm{\varphi}(y)+\omega^2 \bm{\varphi}(y)){\rm d}y=0.
 \end{eqnarray*}
 Hence, we obtain
 \begin{eqnarray*}
 (\bm{S}_{\rm E}\bm{\varphi})(x)=-\bm{\varphi}(x)+\int_{\partial
B_r}\left(\bm{G}(x,y,\omega)\cdot P\bm{\varphi}(y)-
P\bm{G}(x,y,\omega)\cdot\bm{\varphi}(y)\right){\rm d}s(y),
 \end{eqnarray*}
 which implies 
  \begin{eqnarray*}
 (\bm{S}_{\rm E}\bm{u})(x)=-\bm{u}(x)+\int_{\partial
B_r}\left(\bm{G}(x,y,\omega)\cdot
P\bm{u}(y)-P \bm{G}(x,y,\omega)\cdot \bm{u}(y)\right){\rm d}s(y).
 \end{eqnarray*}
Since $\bm{u}(y)$ and $\bm{G}(x,y,\omega)$ satisfy the Sommerfeld radiation
condition, we have 
\begin{eqnarray*}
\lim_{r\rightarrow\infty}\int_{\partial B_r}\left(\bm{G}(x,y,\omega)\cdot
\bm{P}\bm{u}(y)- \bm{P}\bm{G}(x,y,\omega)\cdot \bm{u}(y)\right){\rm d}s(y)=0.
\end{eqnarray*}
Therefore
\begin{eqnarray*}
\bm{u}(x,\omega)=-\int_{D}\bm{G}(x,y,\omega)\cdot
\bm{f}(y){\rm d}y=-\bm{H}_{\omega}\bm{f}(x).
\end{eqnarray*}

Next is to show that $\bm{u}\in H^{1,p}_{\rm
loc}(\mathbb R^d)^d$. By Lemma \ref{lemma_x}, we have that for $s\in
(0,1-\frac{d}{6})$, the operator $\bm{H}_{\omega}: H_0^{-s}(D)^d\rightarrow
H^{s}_{\rm loc}(\mathbb R^d)^d$ is bounded. The assumption
$1<p<\frac{2d}{d+2(1-s)}$ implies that $\frac{1}{2}+\frac{1-s}{d}<\frac{1}{p}<1$
which yields $\frac{1}{2}-\frac{s}{d}<\frac{1}{p}-\frac{1}{d}$. Thus, the
Sobolev embedding theorem implies that $H^{s}(D)$ is embedded into $H^{1, p}(D)$
and $H_0^{-1, p'}(D)$ is embedded into $H_0^{-s}(D)$. Thus, the operator
$\bm{H}_{\omega}: H_0^{-1, p'}(D)^d\rightarrow H^{1,p}_{\rm loc}(\mathbb
R^d)^d$ is bounded, which completes the proof. 
\end{proof}

\subsection{The two-dimensional case}

This subsection is devoted to study the two-dimensional case. It is required to
derive a relationship between the scattering data and the principle symbol of
the covariance operator of the component of $\bm{f}$. To this end, we need to
express the displacement $\bm{u}(x,\omega)$ explicitly. Substituting (\ref{s12})
into (\ref{s5}) gives that $\bm{u}(x,\omega)=(u_1(x,\omega),u_2(x,\omega))^\top$
where 
\begin{align*}
u_1(x,\omega)&=u_{11}(x,\omega)+u_{12}(x,\omega)+u_{13}(x,\omega)+u_{14}(x,
\omega),\\
u_2(x,\omega)&=u_{21}(x,\omega)+u_{22}(x,\omega)+u_{23}(x,\omega)+u_{24}(x,
\omega),
\end{align*}
where
\begin{align*}
&u_{11}(x,\omega) = \frac{{\rm i}}{4\mu}\int_{D}H_0^{(1)}(\kappa_{\rm
s}|x-y|)f_1(y){\rm d}y,\\
&u_{12}(x,\omega) = \frac{{\rm i}}{4\omega^2}\int_{D}\big[-\kappa_{\rm
s}H_1^{(1)}(\kappa_{\rm s}|x-y|)+\kappa_{\rm
p}H_1^{(1)}(\kappa_{\rm p}|x-y|)\big]\frac{1}{|x-y|}f_1(y){\rm d}y,\\
&u_{13}(x,\omega) =
\frac{{\rm i}}{4\omega^2}\int_{D}\big[\kappa_{\rm s}^2 H_2^{(1)}
(\kappa_{\rm s}|x-y|)-\kappa_{\rm p}^2 H_2^{(1)
}(\kappa_{\rm p}|x-y|)\big]\frac{(x_1-y_1)^2}{|x-y|^2}f_1(y){\rm d}y,\\
&u_{14}(x,\omega) = \frac{{\rm
i}}{4\omega^2}\int_{D}\big[\kappa_{\rm
s}^2 H_2^{(1)}(\kappa_{\rm s}|x-y|)-\kappa_{\rm p}^2
H_2^{(1)}(\kappa_{\rm p}|x-y|)\big]\frac{(x_1-y_1)(x_2-y_2)}{|x-y|^2}f_2(y){\rm
d}y,
\end{align*}
and
\begin{align*}
&u_{21}(x,\omega) = \frac{{\rm
i}}{4\mu}\int_{D}H_0^{(1)}(\kappa_{\rm s}|x-y|)f_2(y){\rm d}y,\\
&u_{22}(x,\omega) = \frac{{\rm
i}}{4\omega^2}\int_{D}\big[-\kappa_{\rm
s}H_1^{(1)}(\kappa_{\rm s}|x-y|)+\kappa_{\rm p}H_1^{(1)}
(\kappa_{\rm p}|x-y|)\big]\frac{1}{|x-y|}f_2(y){\rm d}y,\\
&u_{23}(x,\omega) = \frac{{\rm i}}{4\omega^2}\int_{D}\big[\kappa_{\rm
s}^2 H_2^{(1)}(\kappa_{\rm s}|x-y|)-\kappa_{\rm p}^2
H_2^{(1)}(\kappa_{\rm p}|x-y|)\big]\frac{(x_2-y_2)^2}{|x-y|^2}f_2(y){\rm d}y,\\
&u_{24}(x,\omega) = \frac{{\rm i}}{4\omega^2}\int_{D}\big[\kappa_{\rm
s}^2 H_2^{(1)}(\kappa_{\rm s}|x-y|)-\kappa_{\rm p}^2
H_2^{(1)}(\kappa_{\rm p}|x-y|)\big]\frac{(x_1-y_1)(x_2-y_2)}{|x-y|^2}f_1(y){\rm
d}y.
\end{align*}

To prove the main result, we need to establish the asymptotic of
$\bm{u}(x,\omega)$ for $\omega\rightarrow\infty$. Recalling the definition of
$H_{n, N}^{(1)}$ given in (\ref{b7}), we define
$\tilde{\bm{u}}(x,\omega)=(\tilde{u}_1(x,\omega),\tilde{u}_2(x,\omega))^\top$,
where
\begin{align*}
\tilde{u}_1(x,\omega)&=\tilde{u}_{11}(x,\omega)+\tilde{u}_{12}(x,\omega)+\tilde{
u}_{13}(x,\omega)+\tilde{u}_{14}(x,\omega),\\
\tilde{u}_2(x,\omega)&=\tilde{u}_{21}(x,\omega)+\tilde{u}_{22}(x,\omega)+\tilde{
u}_{23}(x,\omega)+\tilde{u}_{24}(x,\omega).
\end{align*}
Here
\begin{align*}
&\tilde{u}_{11}(x,\omega) = \frac{{\rm
i}}{4\mu}\int_{D}H_{0,2}^{(1)}(\kappa_{\rm s}|x-y|)f_1(y){\rm d}y,\\
&\tilde{u}_{12}(x,\omega) = \frac{{\rm
i}}{4\omega^2}\int_{D}\big[-\kappa_{\rm
s}H_{1,3}^{(1)}(\kappa_{\rm s}|x-y|)+\kappa_{\rm p}H_{1, 3}
^{(1)}(\kappa_{\rm p}|x-y|)\big]\frac{1}{|x-y|}f_1(y){\rm d}y,\\
&\tilde{u}_{13}(x,\omega) = \frac{{\rm
i}}{4\omega^2}\int_{D}\big[\kappa_{\rm s}^2 H_{2,4}^{(1)}
(\kappa_{\rm s}|x-y|)-\kappa_{\rm p}^2
H_{2, 4}^{(1)}(\kappa_{\rm p}|x-y|)\big]\frac{(x_1-y_1)^2}{|x-y|^2}f_1(y){\rm
d}y,\\
&\tilde{u}_{14}(x,\omega) = \frac{{\rm
i}}{4\omega^2}\int_{D}\big[\kappa_{\rm s}^2 H_{2,4}^{(1)}
(\kappa_{\rm s}|x-y|)-\kappa_{\rm p}^2
H_{2, 4}^{(1)}(\kappa_{\rm
p}|x-y|)\big]\frac{(x_1-y_1)(x_2-y_2)}{|x-y|^2}f_2(y){\rm d}y,
\end{align*}
and
\begin{align*}
&\tilde{u}_{21}(x,\omega) = \frac{{\rm
i}}{4\mu}\int_{D}H_{0,2}^{(1)}(\kappa_{\rm s}|x-y|)f_2(y){\rm d}y,\\
&\tilde{u}_{22}(x,\omega) = \frac{{\rm
i}}{4\omega^2}\int_{D}\big[-\kappa_{\rm
s}H_{1,3}^{(1)}(\kappa_{\rm s}|x-y|)+\kappa_{\rm p}H_{1, 3}
^{(1)}(\kappa_{\rm p}|x-y|)\big]\frac{1}{|x-y|}f_2(y){\rm d}y,\\
&\tilde{u}_{23}(x,\omega) = \frac{{\rm
i}}{4\omega^2}\int_{D}\big[\kappa_{\rm s}^2 H_{2,4}^{(1)}
(\kappa_{\rm s}|x-y|)-\kappa_{\rm p}^2 H_{2
,4}^{(1)}(\kappa_{\rm p}|x-y|)\big]\frac{(x_2-y_2)^2}{|x-y|^2}f_2(y){\rm d}y,\\
&\tilde{u}_{24}(x,\omega) = \frac{{\rm
i}}{4\omega^2}\int_{D}\big[\kappa_{\rm s}^2 H_{2,4}^{(1)}
(\kappa_{\rm s}|x-y|)-\kappa_{\rm p}^2
H_{2, 4}^{(1)}(\kappa_{\rm
p}|x-y|)\big]\frac{(x_1-y_1)(x_2-y_2)}{|x-y|^2}f_1(y){\rm d}y.
\end{align*}

\begin{lemma}\label{lemma1d}
The random variable $ \bm{u}(x,\omega)-\tilde{\bm{u}}(x,\omega)$ satisfies
almost surely the condition
\begin{eqnarray*}
|\bm{u}(x,\omega)-\tilde{\bm{u}}(x,\omega)|\leq c \omega^{-\frac{7}{2}}, \quad
x\in U, \quad \omega>0,
\end{eqnarray*}
where the constant $c$ depends only on $H_0^{-1,p'}(D)^2$-norm of
$\bm{f}$.
\end{lemma}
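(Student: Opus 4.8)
The plan is to mirror the proof of Lemma~\ref{lemma1b}, working componentwise. By \eqref{s20} and the explicit formulas for the $u_{ij}$ and $\tilde u_{ij}$, the difference $\bm u(x,\omega)-\tilde{\bm u}(x,\omega)$ is a finite sum over $i=1,2$ and $j=1,\dots,4$ of scalar quantities of the form
\[
u_{ij}(x,\omega)-\tilde u_{ij}(x,\omega)=\int_D G_{ij}(x,y,\omega)f_k(y)\,{\rm d}y,\qquad k=k(i,j)\in\{1,2\},
\]
in which each kernel $G_{ij}(x,y,\omega)$ is the product of a difference of scaled Hankel functions $H_n^{(1)}(\kappa_\bullet|x-y|)-H_{n,N}^{(1)}(\kappa_\bullet|x-y|)$, with $\kappa_\bullet\in\{\kappa_{\rm p},\kappa_{\rm s}\}$, and a function which, together with all of its $y$-derivatives, is bounded on $U\times D$. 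The latter is where Assumption~A enters: since $|x-y|\ge M>0$ for all $x\in U$ and $y\in D$, the factors $|x-y|^{-\ell}$ and $(x_i-y_i)/|x-y|^{\ell}$ occurring in $G_{ij}$ are smooth and bounded there. In particular $G_{ij}(x,\cdot,\omega)\in H^{1,p}(D)$, and since $\bm f\in H_0^{-1,p'}(D)^2$ almost surely (Lemma~\ref{lemma1a}), the duality $(H^{1,p}(D))'=H_0^{-1,p'}(D)$ gives
\[
|u_{ij}(x,\omega)-\tilde u_{ij}(x,\omega)|\lesssim\|G_{ij}(x,\cdot,\omega)\|_{H^{1,p}(D)}\,\|\bm f\|_{H_0^{-1,p'}(D)^2}.
\]

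The heart of the argument is a uniform-in-$x$ estimate of $\|G_{ij}(x,\cdot,\omega)\|_{H^{1,p}(D)}$, obtained from the remainder bound \eqref{b8}, the recurrence \eqref{r9} (which, together with \eqref{b8}, governs the $y$-derivatives of the Hankel differences: a $y$-derivative produces a factor $\kappa_\bullet$ but leaves the decay exponent $N+\tfrac32$ in $(\kappa_\bullet|x-y|)^{-N-3/2}$ intact), the identities $\kappa_{\rm p}=c_{\rm p}\omega$, $\kappa_{\rm s}=c_{\rm s}\omega$, and the lower bound $|x-y|\ge M$. For $j=1$ the kernel is $\tfrac{{\rm i}}{4\mu}\bigl(H_0^{(1)}(\kappa_{\rm s}|x-y|)-H_{0,2}^{(1)}(\kappa_{\rm s}|x-y|)\bigr)$, which is exactly of the type treated in the proof of Lemma~\ref{lemma1b}; the same computation, with $\kappa_{\rm s}$ in place of $\kappa$, gives $\|G_{i1}(x,\cdot,\omega)\|_{H^{1,p}(D)}\lesssim\omega^{-7/2}$. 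For $j=2$ the prefactor $\omega^{-2}$, the extra factor $\kappa_\bullet$ and the $N=3$ remainder $(\kappa_\bullet|x-y|)^{-9/2}$ combine to $\lesssim\omega^{-9/2}$; for $j=3,4$ the prefactor $\omega^{-2}$, the factor $\kappa_\bullet^{2}$ and the $N=4$ remainder $(\kappa_\bullet|x-y|)^{-11/2}$ again combine to $\lesssim\omega^{-9/2}$ (the $O(1)$ geometric coefficients $(x_i-y_i)(x_j-y_j)/|x-y|^2$ and their $y$-derivatives are harmless). Hence $\|G_{ij}(x,\cdot,\omega)\|_{H^{1,p}(D)}\lesssim\omega^{-7/2}$ for all $i,j$, with the dominant contribution coming from $j=1$.

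Summing the eight scalar estimates then yields $|\bm u(x,\omega)-\tilde{\bm u}(x,\omega)|\le c\,\omega^{-7/2}\|\bm f\|_{H_0^{-1,p'}(D)^2}$ for all large $\omega$; for $\omega$ in a bounded set the same bound holds after enlarging $c$, since there $|\bm u-\tilde{\bm u}|$ is bounded by a constant multiple of $\|\bm f\|_{H_0^{-1,p'}(D)^2}$ — indeed it grows at worst like $\omega^{-2}$ as $\omega\to0^+$ — and this is dominated by $\omega^{-7/2}$. Absorbing the finitely many $\omega$-independent constants (coming from $\mu$, $c_{\rm p}$, $c_{\rm s}$, $M$ and the domains $U$, $D$) into $c$ gives the claim, with $c$ depending only on the $H_0^{-1,p'}(D)^2$-norm of $\bm f$. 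I expect the only nonroutine point to be the uniform $H^{1,p}(D)$-bounds on the Hankel-type kernels — in particular, checking that differentiating the truncated asymptotics in $y$ preserves the gain in $\omega$, and the bookkeeping that the powers of $\omega$ coming from the prefactors $1/\mu$ and $1/\omega^2$, the extra $\kappa_\bullet$ and $\kappa_\bullet^2$ factors, and the remainder exponents always combine to at most $\omega^{-7/2}$; once Lemma~\ref{lemma1b} is available the $j=1$ terms are immediate and the $j=2,3,4$ terms decay faster.
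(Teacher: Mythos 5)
Your proof follows the paper's argument essentially verbatim: reduce to the eight scalar differences $u_{ij}-\tilde{u}_{ij}$, pair each against the relevant component of $\bm{f}\in H_0^{-1,p'}(D)^2$ by duality, and bound the $H^{1,p}(D)$-norm of each truncated-Hankel remainder kernel via (\ref{b8}) together with the lower bound $|x-y|\geq M$ from Assumption A, the $j=1$ terms supplying the dominant $\omega^{-7/2}$ and the $j=2,3,4$ terms decaying faster. The only inaccuracy is the aside about $\omega\to 0^{+}$ --- the truncated expansions $\tilde{u}_{i3},\tilde{u}_{i4}$ contain terms of size $\omega^{-2}\kappa_{\bullet}^{3/2}(\kappa_{\bullet}|x-y|)^{-4}\sim\omega^{-9/2}$, not $\omega^{-2}$ --- but the paper's own proof likewise only establishes the estimate in the regime where $\kappa_{\bullet}|x-y|$ is large, which is all that is used in the subsequent frequency averages over $[1,Q]$.
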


\begin{proof}
By Assumption A, it is known that there exists a positive constant $M$
such that $|x-y|\geq M$ holds for any $x\in U$ and $y\in D$. By (\ref{b8}), for
$x\in U$, we have
\begin{align*}
|u_{11}(x,\omega)-\tilde{u}_{11}(x,\omega)|&=\left|\frac{\rm
i}{4\mu}\int_{D}\left[H_0^{(1)}(\kappa_{\rm s} |x-y|)-H_{0,2}^{(1)}(\kappa_{\rm
s}|x-y|)\right]f_1(y){\rm d}y\right|\\
&\lesssim
\|H_0^{(1)}(\kappa_s|x-\cdot|)-H_{0,2}^{(1)}(\kappa_s|x-\cdot|)\|_{H^{1,p}(D)}
\|f_1\|_{H_0^{-1,p'}(D)}\\
&\leq c\omega^{-\frac{7}{2}},
\end{align*}
where the constant $c$ depends only on $H_0^{-1,p'}(D)$-norm of $f_1$.

Similarly, it is easy to verify that 
\[
|u_{ij}(x,\omega)-\tilde{u}_{ij}(x,\omega)|\leq c\omega^{-\frac{7}{2}}\quad
\text{ for } i=1,2,\,j=1,2,3,4,
\]
where the constant $c$ depends only on $H_0^{-1,p'}(D)^2$-norm of
$\bm{f}$. Therefore
\begin{eqnarray*}
|\bm{u}(x,\omega)-\tilde{\bm{u}}(x,\omega)|\leq
\sum_{i=1}^{2}\sum_{j=1}^{4}|u_{ij}(x,\omega)-\tilde{u}_{ij}(x,\omega)|\leq
c\omega^{-\frac{7}{2}},
\end{eqnarray*}
which completes the proof. 
\end{proof}

To derive the relationship between the scattering data and the function in the
principle symbol, we need to estimate $\mathbb E(\bm{u}(x,\omega_1)\cdot
\overline{\bm{u}(x,\omega_2)})$ for $\omega_1\geq 1, \omega_2\geq1$.
By Lemma \ref{lemma1d}, it reduces to find the estimate of $\mathbb
E(\tilde{\bm{u}}(x,\omega_1)\cdot \overline{\tilde{\bm{u}}(x,\omega_2)})$ for
$\omega_1\geq 1, \omega_2\geq1$.
Recalling $\tilde{\bm{u}}(x,\omega)=(\tilde{u}_1(x,\omega),\tilde{u}_2(x,
\omega))^\top$ and (\ref{b7}), we have
\begin{align}\label{d16}
\mathbb E(\tilde{\bm{u}}(x,\omega_1)\cdot \overline{\tilde{\bm{u}}(x,\omega_2)})
&=\mathbb E(\tilde{u}_1(x,\omega_1)\overline{\tilde{u}_1(x,\omega_2)})+\mathbb
E(\tilde{u}_2(x,\omega_1)\overline{\tilde{u}_2(x,\omega_2)})\notag\\
&=\sum_{i,j=1}^4\bigg(\mathbb
E(\tilde{u}_{1i}(x,\omega_1)\overline{\tilde{u}_{1j}(x,\omega_2)})+\mathbb
E(\tilde{u}_{2i}(x,\omega_1)\overline{\tilde{u}_{2j}(x,\omega_2)})\bigg)
\end{align}
where
\begin{align*}
\tilde{u}_{11}(x,\omega)&=
\frac{{\rm i}}{4\mu}\int_{D}H_{0,2}^{(1)}(\kappa_{\rm s}|x-y|)f_1(y){\rm d}y\\
&= \frac{{\rm i}}{4\mu}\int_{D}\kappa_{\rm s}^{-\frac{1}{2}}
|x-y|^{-\frac{1}{2}}e^{{\rm i}(\kappa_{\rm s} |x-y|-\frac{1}{4}\pi)}\sum_{j=0}^2
a_j^{(0)}\bigg(\frac{1}{\kappa_{\rm s} |x-y|}\bigg)^jf_1(y){\rm d}y,\\
\tilde{u}_{12}(x,\omega) &=\frac{{\rm
i}}{4\omega^2}\int_{D}\big[-\kappa_{\rm
s}H_{1,3}^{(1)}(\kappa_{\rm s}|x-y|)+\kappa_{\rm p}H_{1, 3}
^{(1)}(\kappa_{\rm p}|x-y|)\big]\frac{1}{|x-y|}f_1(y){\rm d}y\\
&=\frac{{\rm
i}}{4\omega^2}\int_{D}-\kappa_{\rm s}^{\frac{1}{2}}|x-y|^{-\frac{3}{2}}e^{{\rm
i}(\kappa_{\rm s}|x-y|-\frac{3}{4}\pi)}\sum_{j=0}^3 a_j^{(1)}\bigg(\frac{1}{
\kappa_{\rm s}|x-y|}\bigg)^j f_1(y){\rm d}y\\
&\quad+\frac{{\rm
i}}{4\omega^2}\int_{D}\kappa_{\rm p}^{\frac{1}{2}}|x-y|^{-\frac{3}{2}}e^{{\rm
i}(\kappa_{\rm p}|x-y|-\frac{3}{4}\pi)}\sum_{j=0}^3 a_j^{(1)}\bigg(\frac{1}{
\kappa_{\rm p}|x-y|}\bigg)^j f_1(y){\rm d}y,\\
\tilde{u}_{13}(x,\omega) &= \frac{{\rm
i}}{4\omega^2}\int_{D}\big[\kappa_{\rm s}^2 H_{2,4}^{(1)}
(\kappa_{\rm s}|x-y|)-\kappa_{\rm p}^2
H_{2, 4}^{(1)}(\kappa_{\rm p}|x-y|)\big]\frac{(x_1-y_1)^2}{|x-y|^2}f_1(y){\rm
d}y\\
&=\frac{{\rm
i}}{4\omega^2}\int_{D}\kappa_{\rm s}^{\frac{3}{2}}|x-y|^{-\frac{5}{2}}
(x_1-y_1)^2e^{{\rm i}(\kappa_{\rm s}|x-y|-\frac{5}{4}\pi)}\sum_{j=0}^4 a_j^{(2)}
\bigg(\frac{1}{
\kappa_{\rm s}|x-y|}\bigg)^j f_1(y){\rm d}y\\
&\quad-\frac{{\rm
i}}{4\omega^2}\int_{D}\kappa_{\rm p}^{\frac{3}{2}}|x-y|^{-\frac{5}{2}}
(x_1-y_1)^2e^{{\rm i}(\kappa_{\rm p}|x-y|-\frac{5}{4}\pi)}\sum_{j=0}^4 a_j^{(2)}
\bigg(\frac{1}{\kappa_{\rm p}|x-y|}\bigg)^j f_1(y){\rm d}y,\\
\tilde{u}_{14}(x,\omega) &= \frac{{\rm
i}}{4\omega^2}\int_{D}\big[\kappa_{\rm s}^2 H_{2,4}^{(1)}
(\kappa_{\rm s}|x-y|)-\kappa_{\rm p}^2
H_{2, 4}^{(1)}(\kappa_{\rm
p}|x-y|)\big]\frac{(x_1-y_1)(x_2-y_2)}{|x-y|^2}f_2(y){\rm d}y\\
&=\frac{{\rm
i}}{4\omega^2}\int_{D}\kappa_{\rm s}^{\frac{3}{2}}|x-y|^{-\frac{5}{2}}
(x_1-y_1)(x_2-y_2)e^{{\rm
i}(\kappa_{\rm s}|x-y|-\frac{5}{4}\pi)}\sum_{j=0}^4 a_j^{(2)}\bigg(\frac{1}{
\kappa_{\rm s}|x-y|}\bigg)^j f_2(y){\rm d}y\\
&\quad-\frac{{\rm
i}}{4\omega^2}\int_{D}\kappa_{\rm p}^{\frac{3}{2}}|x-y|^{-\frac{5}{2}}
(x_1-y_1)(x_2-y_2)e^{{\rm
i}(\kappa_{\rm p}|x-y|-\frac{5}{4}\pi)}\sum_{j=0}^4 a_j^{(2)}\bigg(\frac{1}{
\kappa_{\rm p}|x-y|}\bigg)^j f_2(y){\rm d}y.
\end{align*}
Using the assumption $\mathbb E(f_1 f_2)=0$, we obtain
\begin{align*}
&\mathbb E(\tilde{u}_{11}(x,\omega_1)\overline{\tilde{u}_{14}(x,\omega_2)}
)=0, \quad
\mathbb E(\tilde{u}_{12}(x,\omega_1)\overline{\tilde{u}_{14}(x,\omega_2)}
)=0,\\
&\mathbb
E(\tilde{u}_{13}(x,\omega_1)\overline{\tilde{u}_{14}(x,\omega_2)})=0, \quad
\mathbb
E(\tilde{u}_{14}(x,\omega_1)\overline{\tilde{u}_{11}(x,\omega_2)})=0,\\
&\mathbb
E(\tilde{u}_{14}(x,\omega_1)\overline{\tilde{u}_{12}(x,\omega_2)})=0, \quad
\mathbb E(\tilde{u}_{14}(x,\omega_1)\overline{\tilde{u}_{13}(x,\omega_2)})=0,
\end{align*}
and
\begin{align*}
&\mathbb E(\tilde{u}_{11}(x,\omega_1)\overline{\tilde{u}_{11}(x,\omega_2)})\\
&=\frac{1}{16\mu^2}\sum_{j_1,j_2=0}^2\frac{a^{(0)}_{j_1}\overline{a^{(0)}_{j_2}
}}{c_{\rm
s}^{j_1+j_2+1}\omega_1^{j_1+\frac{1}{2}}\omega_2^{j_1+\frac{1}{2}}}\int_{
\mathbb R^4}\frac{e^{{\rm
i}(c_{\rm s}\omega_1|x-y|-c_{\rm
s}\omega_2|x-z|)}}{|x-y|^{j_1+\frac{1}{2}}|x-z|^{ j_2+\frac {
1}{2}}}\mathbb E(f_1(y)f_1(z)){\rm d}y{\rm d}z,
\end{align*}
\begin{align*}
&\mathbb E(\tilde{u}_{11}(x,\omega_1)\overline{\tilde{u}_{12}(x,\omega_2)})
=\frac{e^{\frac{\pi}{2}{\rm
i}}}{16\mu}\sum_{j_1=0}^2\sum_{j_2=0}^3\frac{a_{j_1}^{(0)}\overline{a_{j_2}^{(1)
}}}{\omega_1^{j_1+\frac{1}{2}}\omega_2^{j_2+\frac{3}{2}}}\\
&\quad\times\int_{\mathbb R^4}\bigg[-\frac{e^{{\rm
i}(c_{\rm s}\omega_1|x-y|-c_{\rm
s}\omega_2|x-z|)}}{c_{\rm s}^{j_1+j_2}}+\frac{e^{{\rm
i}(c_{\rm s}\omega_1|x-y|-c_{\rm p}\omega_2|x-z|)}}{c_{\rm
s}^{j_1+\frac{1}{2}}c_{\rm p}^{j_2-\frac{1}{2}}}\bigg]\frac{\mathbb
E(f_1(y)f_1(z))}{|x-y|^{j_1+\frac{1}{2}}|x-z|^{j_2+\frac{3}{2}}}{\rm d}y{\rm
d}z,
\end{align*}
\begin{align*}
&\mathbb E(\tilde{u}_{11}(x,\omega_1)\overline{\tilde{u}_{13}(x,\omega_2)})
=\frac{e^{\pi{\rm
i}}}{16\mu}\sum_{j_1=0}^2\sum_{j_2=0}^4\frac{a_{j_1}^{(0)}\overline{a_{j_2}^{(2)
}}}{\omega_1^{j_1+\frac{1}{2}}\omega_2^{j_2+\frac{1}{2}}}\\
&\quad\times\int_{\mathbb R^4}\bigg[\frac{e^{{\rm
i}(c_{\rm s}\omega_1|x-y|-c_{\rm
s}\omega_2|x-z|)}}{c_{\rm s}^{j_1+j_2-1}}-\frac{e^{{\rm
i}(c_{\rm s}\omega_1|x-y|-c_{\rm
p}\omega_2|x-z|)}}{c_{\rm s}^{j_1+\frac{1}{2}}c_{\rm p}^{j_2-\frac{3}{2
}}}\bigg]\frac{(x_1-z_1)^2\mathbb
E(f_1(y)f_1(z))}{|x-y|^{j_1+\frac{1}{2}}|x-z|^{j_2+\frac{5}{2}}}{\rm d}y{\rm
d}z,
\end{align*}
\begin{align*}
&\mathbb E(\tilde{u}_{12}(x,\omega_1)\overline{\tilde{u}_{11}(x,\omega_2)})
=\frac{e^{-\frac{\pi}{2}{\rm
i}}}{16\mu}\sum_{j_1=0}^3\sum_{j_2=0}^2\frac{a_{j_1}^{(1)}\overline{a_{j_2}^{(0)
}}}{\omega_1^{j_1+\frac{3}{2}}\omega_2^{j_2+\frac{1}{2}}}\\
&\quad\times\int_{\mathbb R^4}\bigg[-\frac{e^{{\rm
i}(c_{\rm s}\omega_1|x-y|-c_{\rm
s}\omega_2|x-z|)}}{c_{\rm s}^{j_1+j_2}}+\frac{e^{{\rm
i}(c_{\rm p}\omega_1|x-y|-c_{\rm
s}\omega_2|x-z|)}}{c_{\rm p}^{j_1-\frac{1}{2}}c_{\rm s}^{j_2+\frac{1}{2
}}}\bigg]\frac{\mathbb
E(f_1(y)f_1(z))}{|x-y|^{j_1+\frac{3}{2}}|x-z|^{j_2+\frac{1 }{2}}}{\rm d}y{\rm
d}z,
\end{align*}
\begin{align*}
&\mathbb E(\tilde{u}_{12}(x,\omega_1)\overline{\tilde{u}_{12}(x,\omega_2)})
=\frac{1}{16}\sum_{j_1,j_2=0}^3\frac{a_{j_1}^{(1)}\overline{a_{j_2}^{(1)}}}{
\omega_1^{j_1+\frac{3}{2}}\omega_2^{j_2+\frac{3}{2}}}\\
&\quad\times\int_{\mathbb R^4}\bigg[\frac{e^{{\rm
i}(c_{\rm s}\omega_1|x-y|-c_{\rm
s}\omega_2|x-z|)}}{c_{\rm s}^{j_1+j_2-1}}+\frac{e^{{\rm
i}(c_{\rm p}\omega_1|x-y|-c_{\rm p}\omega_2|x-z|)}}{c_{\rm p}^{j_1+j_2-1}}\\
&\qquad\qquad-\frac{e^{{\rm
i}(c_{\rm s}\omega_1|x-y|-c_{\rm
p}\omega_2|x-z|)}}{c_{\rm s}^{j_1-\frac{1}{2}}c_{\rm p}^{j_2-\frac{1}{2
}}}-\frac{e^{{\rm
i}(c_{\rm p}\omega_1|x-y|-c_{\rm s}\omega_2|x-z|)}}{c_{\rm p}^{j_1-\frac{1}{2}}
c_{\rm s}^{j_2-\frac{1}{2 }}}\bigg]\frac{\mathbb
E(f_1(y)f_1(z))}{|x-y|^{j_1+\frac{3}{2}}|x-z|^{j_2+\frac{3}{2}}}{\rm d}y{\rm
d}z,\\
\end{align*}
\begin{align*}
&\mathbb E(\tilde{u}_{12}(x,\omega_1)\overline{\tilde{u}_{13}(x,\omega_2)})
=\frac{e^{\frac{\pi}{2}{\rm
i}}}{16}\sum_{j_1=0}^3\sum_{j_2=0}^4\frac{a_{j_1}^{(1)}\overline{a_{j_2}^{(2)}}}
{\omega_1^{j_1+\frac{3}{2}}\omega_2^{j_2+\frac{1}{2}}}\\
&\quad\times\int_{\mathbb R^4}\bigg[-\frac{e^{{\rm
i}(c_{\rm s}\omega_1|x-y|-c_{\rm
s}\omega_2|x-z|)}}{c_{\rm s}^{j_1+j_2-2}}-\frac{e^{{\rm
i}(c_{\rm p}\omega_1|x-y|-c_{\rm p}\omega_2|x-z|)}}{c_{\rm p}^{j_1+j_2-2}}\\
&\qquad\qquad+\frac{e^{{\rm
i}(c_{\rm s}\omega_1|x-y|-c_{\rm
p}\omega_2|x-z|)}}{c_{\rm s}^{j_1-\frac{1}{2}}c_{\rm p}^{j_2-\frac{ 3 } { 2
}}}+\frac{e^{{\rm
i}(c_{\rm p}\omega_1|x-y|-c_{\rm s}\omega_2|x-z|)}}{c_{\rm p}^{j_1-\frac{1}{2}}
c_{\rm s}^{j_2-\frac{3}{2
}}}\bigg]\frac{(x_1-z_1)^2\mathbb
E(f_1(y)f_1(z))}{|x-y|^{j_1+\frac{3}{2}}|x-z|^{j_2+\frac{5}{2}}}{\rm d}y{\rm
d}z,\\
\end{align*}
\begin{align*}
&\mathbb E(\tilde{u}_{13}(x,\omega_1)\overline{\tilde{u}_{11}(x,\omega_2)})
=\frac{e^{-\pi{\rm
i}}}{16\mu}\sum_{j_1=0}^4\sum_{j_2=0}^2\frac{a_{j_1}^{(2)}\overline{a_{j_2}^{(0)
}}}{\omega_1^{j_1+\frac{1}{2}}\omega_2^{j_2+\frac{1}{2}}}\\
&\quad\times\int_{\mathbb R^4}\bigg[\frac{e^{{\rm
i}(c_{\rm s}\omega_1|x-y|-c_{\rm
s}\omega_2|x-z|)}}{c_{\rm s}^{j_1+j_2-1}}-\frac{e^{{\rm
i}(c_{\rm p}\omega_1|x-y|-c_{\rm s}\omega_2|x-z|)}}{c_{\rm
p}^{j_1-\frac{3}{2}}c_{\rm s}^{j_2+\frac{ 1 } { 2
}}}\bigg]\frac{(x_1-y_1)^2\mathbb
E(f_1(y)f_1(z))}{|x-y|^{j_1+\frac{5}{2}}|x-z|^{j_2+\frac{1}{2}}}{\rm d}y{\rm
d}z,
\end{align*}
\begin{align*}
&\mathbb E(\tilde{u}_{13}(x,\omega_1)\overline{\tilde{u}_{12}(x,\omega_2)})
=\frac{e^{-\frac{\pi}{2}{\rm
i}}}{16}\sum_{j_1=0}^4\sum_{j_2=0}^3\frac{a_{j_1}^{(2)}\overline{a_{j_2}^{(1)}}}
{\omega_1^{j_1+\frac{1}{2}}\omega_2^{j_2+\frac{3}{2}}}\\
&\quad\times\int_{\mathbb R^4}\bigg[-\frac{e^{{\rm
i}(c_{\rm s}\omega_1|x-y|-c_{\rm
s}\omega_2|x-z|)}}{c_{\rm s}^{j_1+j_2-2}}-\frac{e^{{\rm
i}(c_{\rm p}\omega_1|x-y|-c_{\rm p}\omega_2|x-z|)}}{c_{\rm p}^{j_1+j_2-2}}\\
&\qquad\qquad+\frac{e^{{\rm
i}(c_{\rm s}\omega_1|x-y|-c_{\rm
p}\omega_2|x-z|)}}{c_{\rm s}^{j_1-\frac{3}{2}}c_{\rm p}^{j_2-\frac{ 1 } { 2
}}}+\frac{e^{{\rm
i}(c_{\rm
p}\omega_1|x-y|-c_{\rm
s}\omega_2|x-z|)}}{c_{\rm p}^{j_1-\frac{3}{2}}c_{\rm s}^{j_2-\frac{ 1 } { 2
}}}\bigg]\frac{(x_1-y_1)^2\mathbb
E(f_1(y)f_1(z))}{|x-y|^{j_1+\frac{5}{2}}|x-z|^{ j_2+\frac{3}{2}}}{\rm d}y{\rm
d}z,
\end{align*}
\begin{align*}
&\mathbb E(\tilde{u}_{13}(x,\omega_1)\overline{\tilde{u}_{13}(x,\omega_2)})
=\frac{1}{16}\sum_{j_1,j_2=0}^4\frac{a_{j_1}^{(2)}\overline{a_{j_2}^{(2)}}}{
\omega_1^{j_1+\frac{1}{2}}\omega_2^{j_2+\frac{1}{2}}}\\
&\quad\times\int_{\mathbb R^4}\bigg[\frac{e^{{\rm
i}(c_{\rm s}\omega_1|x-y|-c_{\rm
s}\omega_2|x-z|)}}{c_{\rm s}^{j_1+j_2-3}}+\frac{e^{{\rm
i}(c_{\rm p}\omega_1|x-y|-c_{\rm p}\omega_2|x-z|)}}{c_{\rm p}^{j_1+j_2-3}}\\
&\qquad-\frac{e^{{\rm i}(c_{\rm s}\omega_1|x-y|-c_{\rm
p}\omega_2|x-z|)}}{c_{\rm s}^{j_1-\frac{3}{2}}c_{\rm p}^{j_2-\frac{ 3 } { 2
}}}-\frac{e^{{\rm
i}(c_{\rm p}\omega_1|x-y|-c_{\rm s}\omega_2|x-z|)}}{c_{\rm p}^{j_1-\frac{3}{2}}
c_{\rm s}^{j_2-\frac{3}{2
}}}\bigg]\frac{(x_1-y_1)^2(x_1-z_1)^2\mathbb
E(f_1(y)f_1(z))}{|x-y|^{j_1+\frac{5}{2}}|x-z|^{j_2+\frac{5}{2}}}{\rm d}y{\rm
d}z,
\end{align*}
\begin{align*}
&\mathbb E(\tilde{u}_{14}(x,\omega_1)\overline{\tilde{u}_{14}(x,\omega_2)})
=\frac{1}{16}\sum_{j_1,j_2=0}^4\frac{a_{j_1}^{(2)}\overline{a_{j_2}^{(2)}}}{
\omega_1^{j_1+\frac{1}{2}}\omega_2^{j_2+\frac{1}{2}}}\\
&\quad\times\int_{\mathbb R^4}\bigg[\frac{e^{{\rm
i}(c_{\rm s}\omega_1|x-y|-c_{\rm
s}\omega_2|x-z|)}}{c_{\rm s}^{j_1+j_2-3}}+\frac{e^{{\rm
i}(c_{\rm p}\omega_1|x-y|-c_{\rm
p}\omega_2|x-z|)}}{c_{\rm p}^{j_1+j_2-3}}-\frac{e^{{\rm
i}(c_{\rm s}\omega_1|x-y|-c_{\rm
p}\omega_2|x-z|)}}{c_{\rm s}^{j_1-\frac{3}{2}}c_{\rm p}^{j_2-\frac{ 3 } { 2
}}}\\
&\qquad\qquad-\frac{e^{{\rm
i}(c_{\rm
p}\omega_1|x-y|-c_{\rm
s}\omega_2|x-z|)}}{c_{\rm p}^{j_1-\frac{3}{2}}c_{\rm s}^{j_2-\frac{ 3 } { 2
}}}\bigg]\frac{(x_1-y_1)(x_2-y_2)(x_1-z_1)(x_2-z_2)}{|x-y|^{j_1+\frac{5}{2}}
|x-z|^{j_2+\frac{5}{2}}}\mathbb E(f_2(y)f_2(z)){\rm d}y{\rm d}z,
\end{align*}

For the second component $\tilde{u}_2(x,\omega)$,  we have from (\ref{b7}) that 
\begin{align*}
\tilde{u}_{21}(x,\omega)&= \frac{{\rm
i}}{4\mu}\int_{D}H_{0,2}^{(1)}(\kappa_{\rm s}|x-y|)f_2(y)dy\\
&= \frac{{\rm i}}{4\mu}\int_{D}\kappa_{\rm s}^{-\frac{1}{2}}
|x-y|^{-\frac{1}{2}}e^{{\rm i}(\kappa_{\rm s} |x-y|-\frac{1}{4}\pi)}\sum_{j=0}^2
a_j^{(0)}\bigg(\frac{1}{\kappa_{\rm s} |x-y|}\bigg)^jf_2(y){\rm d}y,
\end{align*}
\begin{align*}
\tilde{u}_{22}(x,\omega) &= \frac{{\rm
i}}{4\omega^2}\int_{D}\big[-\kappa_{\rm
s} H_{1,3}^{(1)}(\kappa_{\rm s}|x-y|)+\kappa_{\rm p} H_{1, 3}
^{(1)}(\kappa_{\rm p}|x-y|)\big]\frac{1}{|x-y|}f_2(y){\rm d}y\\
&=\frac{{\rm
i}}{4\omega^2}\int_{D}-\kappa_{\rm s}^{\frac{1}{2}}|x-y|^{-\frac{3}{2}}e^{{\rm
i}(\kappa_{\rm s}|x-y|-\frac{3}{4}\pi)}\sum_{j=0}^3 a_j^{(1)}\bigg(\frac{1}{
\kappa_{\rm s}|x-y|}\bigg)^jf_2(y){\rm d}y\\
&\quad+\frac{{\rm
i}}{4\omega^2}\int_{D}\kappa_{\rm p}^{\frac{1}{2}}|x-y|^{-\frac{3}{2} }e^{{\rm
i}(\kappa_{\rm p}|x-y|-\frac{3}{4}\pi)}\sum_{j=0}^3 a_j^{(1)}\bigg(\frac{1}{
\kappa_{\rm p}|x-y|}\bigg)^j f_2(y){\rm d}y,
\end{align*}
\begin{align*}
\tilde{u}_{23}(x,\omega) &= \frac{{\rm
i}}{4\omega^2}\int_{D}\big[\kappa_{\rm s}^2 H_{2,4}^{(1)}
(\kappa_{\rm s}|x-y|)-\kappa_{\rm p}^2 H_{2
, 4}^{(1)}(\kappa_{\rm p}|x-y|)\big]\frac{(x_2-y_2)^2}{|x-y|^2}f_2(y){\rm d}y\\
&=\frac{{\rm
i}}{4\omega^2}\int_{D}\kappa_{\rm s}^{\frac{3}{2}}|x-y|^{-\frac{5}{2}}
(x_2-y_2)^2 e^{{\rm i}(\kappa_{\rm s}|x-y|-\frac{5}{4}\pi)}\sum_{j=0}^4
a_j^{(2)} \bigg(\frac{1}{\kappa_{\rm s}|x-y|}\bigg)^j f_2(y){\rm d}y\\
&\quad-\frac{{\rm
i}}{4\omega^2}\int_{D}\kappa_{\rm p}^{\frac{3}{2}}|x-y|^{-\frac{5}{2}}
(x_2-y_2)^2e^{{ \rm
i}(\kappa_{\rm p}|x-y|-\frac{5}{4}\pi)}\sum_{j=0}^4 a_j^{(2)}\bigg(\frac{1}{
\kappa_{\rm p}|x-y|}\bigg)^j f_2(y){\rm d}y,
\end{align*}
\begin{align*}
\tilde{u}_{24}(x,\omega) &= \frac{{\rm
i}}{4\omega^2}\int_{D}\big[\kappa_{\rm s}^2 H_{2,4}^{(1)}
(\kappa_{\rm s}|x-y|)-\kappa_{\rm p}^2
H_{2, 4}^{(1)}(\kappa_{\rm
p}|x-y|)\big]\frac{(x_1-y_1)(x_2-y_2)}{|x-y|^2}f_1(y){\rm d}y\\
&=\frac{{\rm
i}}{4\omega^2}\int_{D}\kappa_{\rm s}^{\frac{3}{2}}|x-y|^{-\frac{5}{2}}
(x_1-y_1)(x_2-y_2)e^{{\rm
i}(\kappa_{\rm s}|x-y|-\frac{5}{4}\pi)}\sum_{j=0}^4 a_j^{(2)}\bigg(\frac{1}{
\kappa_{\rm s}|x-y|}\bigg)^j f_1(y){\rm d}y\\
&\quad-\frac{{\rm
i}}{4\omega^2}\int_{D}\kappa_{\rm p}^{\frac{3}{2}}|x-y|^{-\frac{5}{2}}
(x_1-y_1)(x_2-y_2)e^{{\rm
i}(\kappa_{\rm p}|x-y|-\frac{5}{4}\pi)}\sum_{j=0}^4 a_j^{(2)}\bigg(\frac{1}{
\kappa_{\rm p}|x-y|}\bigg)^jf_1(y){\rm d}y.
\end{align*}
Noting $\mathbb E(f_1f_2)=0$ from Assumption C, we deduce that
\begin{align*}
&\mathbb
E(\tilde{u}_{21}(x,\omega_1)\overline{\tilde{u}_{24}(x,\omega_2)})=0, \quad
\mathbb
E(\tilde{u}_{22}(x,\omega_1)\overline{\tilde{u}_{24}(x,\omega_2)})=0,\\
&\mathbb
E(\tilde{u}_{23}(x,\omega_1)\overline{\tilde{u}_{24}(x,\omega_2)})=0, \quad
\mathbb
E(\tilde{u}_{24}(x,\omega_1)\overline{\tilde{u}_{21}(x,\omega_2)})=0,\\
&\mathbb
E(\tilde{u}_{24}(x,\omega_1)\overline{\tilde{u}_{22}(x,\omega_2)})=0, \quad
\mathbb E(\tilde{u}_{24}(x,\omega_1)\overline{\tilde{u}_{23}(x,\omega_2)})=0,
\end{align*}
and
\begin{align*}
&\mathbb E(\tilde{u}_{21}(x,\omega_1)\overline{\tilde{u}_{21}(x,\omega_2)})\\
&=\frac{1}{16\mu^2}\sum_{j_1,j_2=0}^2\frac{a^{(0)}_{j_1}\overline{a^{(0)}_{j_2}
}}{c_{\rm s}^{j_1+j_2+1}\omega_1^{j_1+\frac{1}{2}}\omega_2^{j_1+\frac{1}{2}}}
\int_{\mathbb R^4}\frac{e^{{\rm i}(c_{\rm s}\omega_1|x-y|-c_{\rm
s}\omega_2|x-z|)}}{|x-y|^{j_1+\frac{1}{2}}|x-z|^{j_2+\frac{
1}{2}}}\mathbb E(f_2(y)f_2(z)){\rm d}y{\rm d}z,
\end{align*}
\begin{align*}
&\mathbb E(\tilde{u}_{21}(x,\omega_1)\overline{\tilde{u}_{22}(x,\omega_2)})
=\frac{e^{\frac{\pi}{2}{\rm
i}}}{16\mu}\sum_{j_1=0}^2\sum_{j_2=0}^3\frac{a_{j_1}^{(0)}\overline{a_{j_2}^{(1)
}}}{\omega_1^{j_1+\frac{1}{2}}\omega_2^{j_2+\frac{3}{2}}}\\
&\quad\times\int_{\mathbb R^4}\bigg[-\frac{e^{{\rm
i}(c_{\rm s}\omega_1|x-y|-c_{\rm s}\omega_2|x-z|)}}{c_{\rm
s}^{j_1+j_2}}+\frac{e^{{\rm i}(c_{\rm s}\omega_1|x-y|-c_{\rm
p}\omega_2|x-z|)}}{c_{\rm s}^{j_1+\frac{1}{2}}c_{\rm
p}^{j_2-\frac{1}{2}}}\bigg]\frac{\mathbb
E(f_2(y)f_2(z))}{|x-y|^{j_1+\frac{1}{2}}|x-z|^{j_2+\frac{3}{2}}}{\rm d}y{\rm
d}z,
\end{align*}
\begin{align*}
&\mathbb E(\tilde{u}_{21}(x,\omega_1)\overline{\tilde{u}_{23}(x,\omega_2)})
=\frac{e^{\pi{\rm i}}}{16\mu}\sum_{j_1=0}^2\sum_{j_2=0}^4\frac{a_{j_1}^{(0)}
\overline{a_{j_2}^{(2) }}}{\omega_1^{j_1+\frac{1}{2}}\omega_2^{j_2+\frac{1}{2}}}
\\
&\quad\times\int_{\mathbb R^4}\bigg[\frac{e^{{\rm i}(c_{\rm
s}\omega_1|x-y|-c_{\rm s}\omega_2|x-z|)}}{c_{\rm s}^{j_1+j_2-1}}-\frac{e^{{\rm
i}(c_{\rm s}\omega_1|x-y|-c_{\rm p}\omega_2|x-z|)}}{c_{\rm
s}^{j_1+\frac{1}{2}}c_{\rm p}^{j_2-\frac{3}{2
}}}\bigg]\frac{(x_2-z_2)^2\mathbb
E(f_2(y)f_2(z))}{|x-y|^{j_1+\frac{1}{2}}|x-z|^{j_2+\frac{5}{2}}}{\rm d}y{\rm
d}z,
\end{align*}
\begin{align*}
&\mathbb E(\tilde{u}_{22}(x,\omega_1)\overline{\tilde{u}_{21}(x,\omega_2)})
=\frac{e^{-\frac{\pi}{2}{\rm
i}}}{16\mu}\sum_{j_1=0}^3\sum_{j_2=0}^2\frac{a_{j_1}^{(1)}\overline{a_{j_2}^{(0)
}}}{\omega_1^{j_1+\frac{3}{2}}\omega_2^{j_2+\frac{1}{2}}}\\
&\quad\times\int_{\mathbb R^4}\bigg[-\frac{e^{{\rm
i}(c_{\rm s}\omega_1|x-y|-c_{\rm s}\omega_2|x-z|)}}{c_{\rm
s}^{j_1+j_2}}+\frac{e^{{\rm i}(c_{\rm p}\omega_1|x-y|-c_{\rm
s}\omega_2|x-z|)}}{c_{\rm p}^{j_1-\frac{1}{2}}c_{\rm s}^{j_2+\frac{1}{2
}}}\bigg]\frac{\mathbb
E(f_2(y)f_2(z))}{|x-y|^{j_1+\frac{3}{2}}|x-z|^{j_2+\frac{1 }{2}}}{\rm d}y{\rm
d}z,
\end{align*}
\begin{align*}
&\mathbb E(\tilde{u}_{22}(x,\omega_1)\overline{\tilde{u}_{22}(x,\omega_2)})
=\frac{1}{16}\sum_{j_1,j_2=0}^3\frac{a_{j_1}^{(1)}\overline{a_{j_2}^{(1)}}}{
\omega_1^{j_1+\frac{3}{2}}\omega_2^{j_2+\frac{3}{2}}}\\
&\quad\times\int_{\mathbb R^4}\bigg[\frac{e^{{\rm
i}(c_{\rm s}\omega_1|x-y|-c_{\rm
s}\omega_2|x-z|)}}{c_{\rm s}^{j_1+j_2-1}}+\frac{e^{{\rm
i}(c_{\rm p}\omega_1|x-y|-c_{\rm p}\omega_2|x-z|)}}{c_{\rm p}^{j_1+j_2-1}}\\
&\qquad\qquad-\frac{e^{{\rm
i}(c_{\rm s}\omega_1|x-y|-c_{\rm
p}\omega_2|x-z|)}}{c_{\rm s}^{j_1-\frac{1}{2}}c_{\rm p}^{j_2-\frac{1}{2
}}}-\frac{e^{{\rm i}(c_{\rm
p}\omega_1|x-y|-c_{\rm
s}\omega_2|x-z|)}}{c_{\rm p}^{j_1-\frac{1}{2}}c_{\rm s}^{j_2-\frac{1}{2
}}}\bigg]\frac{\mathbb
E(f_2(y)f_2(z))}{|x-y|^{j_1+\frac{3}{2}}|x-z|^{j_2+\frac{3}{2}}}{\rm d}y{\rm
d}z,
\end{align*}
\begin{align*}
&\mathbb E(\tilde{u}_{22}(x,\omega_1)\overline{\tilde{u}_{23}(x,\omega_2)})
=\frac{e^{\frac{\pi}{2}{\rm
i}}}{16}\sum_{j_1=0}^3\sum_{j_2=0}^4\frac{a_{j_1}^{(1)}\overline{a_{j_2}^{(2)}}}
{\omega_1^{j_1+\frac{3}{2}}\omega_2^{j_2+\frac{1}{2}}}\\
&\quad\times\int_{\mathbb R^4}\bigg[-\frac{e^{{\rm
i}(c_{\rm s}\omega_1|x-y|-c_{\rm
s}\omega_2|x-z|)}}{c_{\rm s}^{j_1+j_2-2}}-\frac{e^{{\rm
i}(c_{\rm p}\omega_1|x-y|-c_{\rm p}\omega_2|x-z|)}}{c_{\rm p}^{j_1+j_2-2}}\\
&\qquad\qquad+\frac{e^{{\rm
i}(c_{\rm
s}\omega_1|x-y|-c_{\rm
p}\omega_2|x-z|)}}{c_{\rm s}^{j_1-\frac{1}{2}}c_{\rm p}^{j_2-\frac{ 3 } { 2
}}}+\frac{e^{{\rm
i}(c_{\rm
p}\omega_1|x-y|-c_{\rm s}\omega_2|x-z|)}}{c_{\rm
p}^{j_1-\frac{1}{2}}c_{\rm s}^{j_2-\frac{3}{2
}}}\bigg]\frac{(x_2-z_2)^2\mathbb
E(f_2(y)f_2(z))}{|x-y|^{j_1+\frac{3}{2}}|x-z|^{j_2+\frac{5}{2}}}{\rm d}y{\rm
d}z,
\end{align*}
\begin{align*}
&\mathbb E(\tilde{u}_{23}(x,\omega_1)\overline{\tilde{u}_{21}(x,\omega_2)})
=\frac{e^{-\pi{\rm
i}}}{16\mu}\sum_{j_1=0}^4\sum_{j_2=0}^2\frac{a_{j_1}^{(2)}\overline{a_{j_2}^{(0)
}}}{\omega_1^{j_1+\frac{1}{2}}\omega_2^{j_2+\frac{1}{2}}}\\
&\quad\times\int_{\mathbb R^4}\bigg[\frac{e^{{\rm
i}(c_{\rm s}\omega_1|x-y|-c_{\rm s}\omega_2|x-z|)}}{c_{\rm
s}^{j_1+j_2-1}}-\frac{e^{{\rm
i}(c_{\rm p}\omega_1|x-y|-c_{\rm s}\omega_2|x-z|)}}{c_{\rm
p}^{j_1-\frac{3}{2}}c_{\rm s}^{j_2+\frac{1}{2
}}}\bigg]\frac{(x_2-y_2)^2\mathbb
E(f_2(y)f_2(z))}{|x-y|^{j_1+\frac{5}{2}}|x-z|^{j_2+\frac{1}{2}}}{\rm d}y{\rm
d}z,
\end{align*}
\begin{align*}
&\mathbb E(\tilde{u}_{23}(x,\omega_1)\overline{\tilde{u}_{22}(x,\omega_2)})
=\frac{e^{-\frac{\pi}{2}{\rm
i}}}{16}\sum_{j_1=0}^4\sum_{j_2=0}^3\frac{a_{j_1}^{(2)}\overline{a_{j_2}^{(1)}}}
{\omega_1^{j_1+\frac{1}{2}}\omega_2^{j_2+\frac{3}{2}}}\\
&\times\int_{\mathbb R^4}\bigg[-\frac{e^{{\rm
i}(c_{\rm s}\omega_1|x-y|-c_{\rm s}\omega_2|x-z|)}}{c_{\rm
s}^{j_1+j_2-2}}-\frac{e^{{\rm
i}(c_{\rm p}\omega_1|x-y|-c_{\rm p}\omega_2|x-z|)}}{c_{\rm p}^{j_1+j_2-2}}\\
&\qquad\qquad+\frac{e^{{\rm
i}(c_{\rm s}\omega_1|x-y|-c_{\rm p}\omega_2|x-z|)}}{c_{\rm
s}^{j_1-\frac{3}{2}}c_{\rm p}^{j_2-\frac{1}{2
}}}+\frac{e^{{\rm
i}(c_{\rm p}\omega_1|x-y|-c_{\rm s}\omega_2|x-z|)}}{c_{\rm
p}^{j_1-\frac{3}{2}}c_{\rm s}^{j_2-\frac{1}{2
}}}\bigg]\frac{(x_2-y_2)^2\mathbb
E(f_2(y)f_2(z))}{|x-y|^{j_1+\frac{5}{2}}|x-z|^{j_2+\frac{3}{2}}}{\rm d}y{\rm
d}z,
\end{align*}
\begin{align*}
&\mathbb E(\tilde{u}_{23}(x,\omega_1)\overline{\tilde{u}_{23}(x,\omega_2)})
=\frac{1}{16}\sum_{j_1,j_2=0}^4\frac{a_{j_1}^{(2)}\overline{a_{j_2}^{(2)}}}{
\omega_1^{j_1+\frac{1}{2}}\omega_2^{j_2+\frac{1}{2}}}\\
&\quad\times\int_{\mathbb R^4}\bigg[\frac{e^{{\rm
i}(c_{\rm s}\omega_1|x-y|-c_{\rm s}\omega_2|x-z|)}}{c_{\rm
s}^{j_1+j_2-3}}+\frac{e^{{\rm
i}(c_{\rm p}\omega_1|x-y|-c_{\rm p}\omega_2|x-z|)}}{c_{\rm p}^{j_1+j_2-3}}\\
&\qquad-\frac{e^{{\rm
i}(c_{\rm s}\omega_1|x-y|-c_{\rm p}\omega_2|x-z|)}}{c_{\rm
s}^{j_1-\frac{3}{2}}c_{\rm p}^{j_2-\frac{3}{2
}}}-\frac{e^{{\rm
i}(c_{\rm p}\omega_1|x-y|-c_{\rm s}\omega_2|x-z|)}}{c_{\rm
p}^{j_1-\frac{3}{2}}c_{\rm s}^{j_2-\frac{3}{2
}}}\bigg]\frac{(x_2-y_2)^2(x_2-z_2)^2\mathbb
E(f_2(y)f_2(z))}{|x-y|^{j_1+\frac{5}{2}}|x-z|^{j_2+\frac{5}{2}}}{\rm d}y{\rm
d}z,
\end{align*}
\begin{align*}
&\mathbb E(\tilde{u}_{24}(x,\omega_1)\overline{\tilde{u}_{24}(x,\omega_2)})
=\frac{1}{16}\sum_{j_1,j_2=0}^4\frac{a_{j_1}^{(2)}\overline{a_{j_2}^{(2)}}}{
\omega_1^{j_1+\frac{1}{2}}\omega_2^{j_2+\frac{1}{2}}}\\
&\quad\times\int_{\mathbb R^4}\bigg[\frac{e^{{\rm
i}(c_{\rm s}\omega_1|x-y|-c_{\rm s}\omega_2|x-z|)}}{c_{\rm
s}^{j_1+j_2-3}}+\frac{e^{{\rm
i}(c_{\rm p}\omega_1|x-y|-c_{\rm p}\omega_2|x-z|)}}{c_{\rm
p}^{j_1+j_2-3}}-\frac{e^{{\rm
i}(c_{\rm s}\omega_1|x-y|-c_{\rm p}\omega_2|x-z|)}}{c_{\rm
s}^{j_1-\frac{3}{2}}c_{\rm p}^{j_2-\frac{3}{2
}}}\\
&\qquad\qquad-\frac{e^{{\rm
i}(c_{\rm p}\omega_1|x-y|-c_{\rm s}\omega_2|x-z|)}}{c_{\rm
p}^{j_1-\frac{3}{2}}c_{\rm s}^{j_2-\frac{3}{2
}}}\bigg]\frac{(x_1-y_1)(x_2-y_2)(x_1-z_1)(x_2-z_2)}{|x-y|^{j_1+\frac{5}{2}}
|x-z|^{j_2+\frac{5}{2}}}\mathbb E(f_1(y)f_1(z)){\rm d}y{\rm d}z.
\end{align*}

A direct application of Lemma \ref{lemma2b} to each item on the right hand side
of (\ref{d16}) gives the following lemma.

\begin{lemma}\label{lemma2d}
For $\omega_1\geq 1, \omega_2\geq 1$, the following estimates
\begin{align*}
|\mathbb E(\tilde{\bm{u}}(x,\omega_1)\cdot
\tilde{\bm{u}}(x,\omega_2))|&\leq c_n(1+|\omega_1-\omega_2|)^{-n}
(\omega_1+\omega_2)^{-m-1},\\
|\mathbb
E(\tilde{\bm{u}}(x,\omega_1)\cdot\tilde{\bm{u}}(x,
\omega_2))|&\leq c_n(\omega_1+\omega_2)^{-n}(1+|\omega_1-\omega_2|)^{-m}
\end{align*}
holds uniformly for $x\in U$, where $n\in\mathbb N$ is arbitrary and $c_n>0$ is
a constant depending only on $n$. 
\end{lemma}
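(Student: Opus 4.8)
The strategy is to reduce the two claimed bounds to the single-frequency estimates (\ref{b13}) and (\ref{b14}) of Lemma \ref{lemma2b}, exactly as Lemma \ref{lemma3b} was deduced from Lemma \ref{lemma2b} in the acoustic two-dimensional case. By (\ref{d16}), the quantity $\mathbb E(\tilde{\bm u}(x,\omega_1)\cdot\overline{\tilde{\bm u}(x,\omega_2)})$ is a finite sum of terms $\mathbb E(\tilde u_{ki}(x,\omega_1)\overline{\tilde u_{kj}(x,\omega_2)})$ with $k=1,2$ and $i,j\in\{1,2,3,4\}$. Those terms in which exactly one of $i,j$ equals $4$ vanish, since they couple $f_1$ and $f_2$ through the factor $\mathbb E(f_1f_2)=0$ from Assumption C; the remaining ones have been written out explicitly above. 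The first step is to observe that each surviving term is a finite linear combination of integrals of the form $I(x,\kappa_1,\kappa_2)$ in (\ref{b11}) with $\kappa_1=\omega_1$ and $\kappa_2=\omega_2$: the phase is $c_1\omega_1|x-y|-c_2\omega_2|x-z|$ with $c_1,c_2\in\{c_{\rm p},c_{\rm s}\}\subset(0,\infty)$, the kernel $K(x,y,z)$ is precisely of the type (\ref{b12}) (its $(x_i-y_i)$- and $(x_i-z_i)$-factors and negative powers of $|x-y|,|x-z|$ being read off from the displayed formulas), and the random function $q$ is one of $f_1,f_2$, each of which satisfies Assumption B in view of Assumption C.

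The second step is to track the powers $l_1,l_2$ of $\omega_1,\omega_2$ that appear in front of these integrals. Since each Hankel asymptotic $H_{n,N}^{(1)}(\kappa|z|)$ in (\ref{b7}) carries a factor $(\kappa|z|)^{-\frac{1}{2}}$, a direct inspection of the displayed formulas for $\mathbb E(\tilde u_{ki}(x,\omega_1)\overline{\tilde u_{kj}(x,\omega_2)})$ shows that every such integral has $l_1\ge\frac{1}{2}$ and $l_2\ge\frac{1}{2}$, whence $\min(l_1,l_2)\ge\frac{1}{2}$ and $m+2\min(l_1,l_2)\ge m+1$. Invoking (\ref{b13}) for each summand then gives
\[
|I(x,\omega_1,\omega_2)|\le c_n(\omega_1+\omega_2)^{-(m+1)}(1+|\omega_1-\omega_2|)^{-n}
\]
uniformly in $x\in U$, and summing the finitely many terms yields the first estimate. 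For the second estimate one expands $\mathbb E(\tilde{\bm u}(x,\omega_1)\cdot\tilde{\bm u}(x,\omega_2))$ (without conjugation) in the same manner; the resulting integrals carry the phase $c_1\omega_1|x-y|+c_2\omega_2|x-z|$, that is, they are of the form $\tilde I$ obtained from $I$ by replacing $\omega_2$ with $-\omega_2$, and as observed in the proof of Lemma \ref{lemma2b} these satisfy $|\tilde I(x,\omega_1,\omega_2)|\le c_n(\omega_1+\omega_2)^{-n}(1+|\omega_1-\omega_2|)^{-m}$; summation again finishes the proof.

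The only genuine effort lies in the bookkeeping of the first two steps: one has to confirm that, after substituting the expansions (\ref{b7}) and multiplying out the finite sums, each of the covariance terms listed above is indeed a linear combination of integrals of the form (\ref{b11}) with polynomial-type kernels (\ref{b12}), and that no prefactor ever drops below the critical power $\omega^{1/2}$ in either frequency variable. This verification is routine but somewhat lengthy; once it is in place, the estimates follow mechanically, all the microlocal machinery having already been packaged into Lemma \ref{lemma2b}.
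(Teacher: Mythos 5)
Your proposal is correct and matches the paper's argument: the paper likewise expands $\mathbb E(\tilde{\bm u}(x,\omega_1)\cdot\overline{\tilde{\bm u}(x,\omega_2)})$ via (\ref{d16}), discards the cross terms killed by $\mathbb E(f_1f_2)=0$, recognizes each surviving term as a linear combination of integrals of the form (\ref{b11}), and applies Lemma \ref{lemma2b} directly. Your explicit bookkeeping that every prefactor satisfies $\min(l_1,l_2)\ge\tfrac12$, so that $m+2\min(l_1,l_2)\ge m+1$, is exactly the point the paper leaves implicit, and it is verified correctly.
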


To obtain the relation between the scattering data and the function in the
principle symbol, it is required to estimate the order of $\mathbb
E(\tilde{u}(x,\omega)\cdot\overline{\tilde{u}(x,\omega)})$. According to
(\ref{d16}) where we set $\omega_1=\omega_2=\omega$, it reduces to estimate the
order of
$\mathbb E(\tilde{u}_{ij_1}(x,\omega)\overline{\tilde{u}_{ij_2}(x,\omega)})$ for
$i=1,2, j_1, j_2=1,2,3,4$. Applying Lemma \ref{lemma4b} gives that
\begin{align*}
\mathbb
E(\tilde{u}_{11}(x,\omega)\overline{\tilde{u}_{12}(x,\omega)})&=O(\omega^{
-(m+2)}), \quad
\mathbb
E(\tilde{u}_{12}(x,\omega)\overline{\tilde{u}_{11}(x,\omega)})=O(\omega^{
-(m+2)}),\\
\mathbb
E(\tilde{u}_{12}(x,\omega)\overline{\tilde{u}_{12}(x,\omega)})&=O(\omega^{
-(m+2)}), \quad
\mathbb
E(\tilde{u}_{12}(x,\omega)\overline{\tilde{u}_{13}(x,\omega)})=O(\omega^{
-(m+2)}),\\
\mathbb
E(\tilde{u}_{13}(x,\omega)\overline{\tilde{u}_{12}(x,\omega)})&=O(\omega^{
-(m+2)}), \quad
\mathbb
E(\tilde{u}_{21}(x,\omega)\overline{\tilde{u}_{22}(x,\omega)})=O(\omega^{
-(m+2)}),\\
\mathbb
E(\tilde{u}_{22}(x,\omega)\overline{\tilde{u}_{21}(x,\omega)})&=O(\omega^{
-(m+2)}), \quad
\mathbb
E(\tilde{u}_{22}(x,\omega)\overline{\tilde{u}_{22}(x,\omega)})=O(\omega^{
-(m+2)}),\\
\mathbb
E(\tilde{u}_{22}(x,\omega)\overline{\tilde{u}_{23}(x,\omega)})&=O(\omega^{
-(m+2)}), \quad
\mathbb
E(\tilde{u}_{23}(x,\omega)\overline{\tilde{u}_{22}(x,\omega)})=O(\omega^{
-(m+2)}),
\end{align*}
and
\begin{align*}
\mathbb
E(\tilde{u}_{11}(x,\omega)\overline{\tilde{u}_{11}(x,\omega)})&=N^{(2)}
_1(x)\omega^{-(m+1)}+O(\omega^{-(m+2)})\\
\mathbb
E(\tilde{u}_{11}(x,\omega)\overline{\tilde{u}_{13}(x,\omega)})&=N^{(2)}_2(x,
\omega)\omega^{-(m+1)}+O(\omega^{-(m+2)})\\
\mathbb
E(\tilde{u}_{13}(x,\omega)\overline{\tilde{u}_{11}(x,\omega)})&=N^{(2)}_3(x,
\omega)\omega^{-(m+1)}+O(\omega^{-(m+2)})\\
\mathbb
E(\tilde{u}_{13}(x,\omega)\overline{\tilde{u}_{13}(x,\omega)})&=N^{(2)}_4(x,
\omega)\omega^{-(m+1)}+O(\omega^{-(m+2)})\\
\mathbb
E(\tilde{u}_{14}(x,\omega)\overline{\tilde{u}_{14}(x,\omega)})&=N^{(2)}_5(x,
\omega)\omega^{-(m+1)}+O(\omega^{-(m+2)})\\
\mathbb
E(\tilde{u}_{21}(x,\omega)\overline{\tilde{u}_{21}(x,\omega)})&=N^{(2)}
_6(x)\omega^{-(m+1)}+O(\omega^{-(m+2)})\\
\mathbb
E(\tilde{u}_{21}(x,\omega)\overline{\tilde{u}_{23}(x,\omega)})&=N^{(2)}_7(x,
\omega)\omega^{-(m+1)}+O(\omega^{-(m+2)})\\
\mathbb
E(\tilde{u}_{23}(x,\omega)\overline{\tilde{u}_{21}(x,\omega)})&=N^{(2)}_8(x,
\omega)\omega^{-(m+1)}+O(\omega^{-(m+2)})\\
\mathbb
E(\tilde{u}_{23}(x,\omega)\overline{\tilde{u}_{23}(x,\omega)})&=N^{(2)}_9(x,
\omega)\omega^{-(m+1)}+O(\omega^{-(m+2)})\\
\mathbb
E(\tilde{u}_{24}(x,\omega)\overline{\tilde{u}_{24}(x,\omega)})&=N^{(2)}_{10}(x,
\omega)\omega^{-(m+1)}+O(\omega^{-(m+2)}),
\end{align*}
where
\begin{align*}
N^{(2)}_1(x)&=N^{(2)}_6(x)=a_{1}\int_{\mathbb
R^2}\frac{1}{|x-y|}\phi(y){\rm d}y,\\
N^{(2)}_2(x,\omega)&=\int_{\mathbb R^2}(a_{2}e^{{\rm
i}(c_{\rm s}-c_{\rm
p})|x-y|\omega}-a_{1})\frac{(x_1-y_1)^2}{|x-y|^3}\phi(y){\rm d}y,\\
N^{(2)}_3(x,\omega)&=\int_{\mathbb R^2}(a_{2}e^{{\rm
i}(c_{\rm
p}-c_{\rm
s})|x-y|\omega}-a_{1})\frac{(x_1-y_1)^2}{|x-y|^3}\phi(y){\rm d}y,\\
N^{(2)}_4(x,\omega)&=\int_{\mathbb
R^2}(a_{3}-2a_{2}\cos((c_{\rm s}-c_{\rm
p})|x-y|\omega))\frac{(x_1-y_1)^4}{|x-y|^5}
\phi(y){\rm d}y,\\
N^{(2)}_5(x,\omega)&=N^{(2)}_{10}(x,\omega)=\int_{\mathbb
R^2}(a_{3}-2a_{2}\cos((c_{\rm s}-c_{\rm
p})|x-y|\omega))\frac{(x_1-y_1)^2(x_2-y_2)^2}{
|x-y|^5}\phi(y){\rm d}y,\\
N^{(2)}_7(x,\omega)&=\int_{\mathbb R^2}(a_{2}e^{{\rm
i}(c_{\rm s}-c_{\rm
p})|x-y|\omega}-a_{1})\frac{(x_2-y_2)^2}{|x-y|^3}\phi(y){\rm d}y,\\
N^{(2)}_8(x,\omega)&=\int_{\mathbb R^2}(a_{2}e^{{\rm
i}(c_{\rm p}-c_{\rm
s})|x-y|\omega}-a_{1})\frac{(x_2-y_2)^2}{|x-y|^3}\phi(y){\rm d}y,\\
N^{(2)}_9(x,\omega)&=\int_{\mathbb
R^2}(a_{3}-2a_{2}\cos((c_{\rm s}-c_{\rm
p})|x-y|\omega))\frac{(x_2-y_2)^4}{|x-y|^5}\phi(y){\rm d}y.
\end{align*}
Here, $a_{1}$, $a_{2}$, and $a_{3}$ are positive constants given by
\[
a_{1}=\frac{1}{32\pi c_{\rm s}^{m-3}},\quad a_{2}=
\frac{(c_{\rm s}c_{\rm p})^{\frac{3}{2}}}{32\pi}\left(\frac{2}{c_{\rm s}+c_{\rm
p}}\right)^{m},\quad
a_{3}=\frac{1}{32\pi}\left(\frac{1}{c_{\rm s}^{m-3}}+\frac{1}{c_{\rm
p}^{m-3}}\right).
\]
By (\ref{d16}) and a simple calculation, we obtain
\begin{eqnarray}\label{d50}
\mathbb
E(\tilde{\bm{u}}(x,\omega)\cdot\overline{\tilde{\bm{u}}(x,\omega)})=T^{(2)}_{\rm
 E}(x)\omega^{-(m+1)}+O(\omega^{-(m+2)}),
\end{eqnarray}
where 
\begin{equation}\label{t1}
T^{(2)}_{\rm E}(x)=\sum_{j=1}^{10}N^{(2)}_j(x,\omega)=a_{3}\int_{\mathbb
R^2}\frac{1}{|x-y|}\phi(y)dy.
\end{equation}

Now we are ready to present the main result for elastic waves in the 
two dimensions.

\begin{theorem}
Let the external source $\bm{f}$ be a microlocally isotropic Gaussian random
vector field which satisfies Assumption C. Then for all $x\in U$, it
holds almost surely that 
\begin{eqnarray}\label{d53}
\lim_{Q\rightarrow\infty}\frac{1}{Q-1}\int_1^Q\omega^{m+1}|\bm{u}(x,
\omega)|^2{\rm d}\omega=T^{(2)}_{\rm E}(x),
\end{eqnarray}
where $T^{(2)}_{\rm E}(x)$ is given in (\ref{t1}). Moreover, the
scattering data $T^{(2)}_{\rm E}(x)$, for $x\in U$ uniquely determine the
micro-correlation strength $\phi$ through the linear integral
equation (\ref{t1}).
\end{theorem}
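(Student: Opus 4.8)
The plan is to follow the scheme of Theorem~\ref{theorem5b}, the two-dimensional acoustic case, with the bookkeeping adapted to the vector field $\bm u$. First I would split $\bm u=\tilde{\bm u}+(\bm u-\tilde{\bm u})$ and expand $|\bm u(x,\omega)|^2$, so that
\[
\frac{1}{Q-1}\int_1^Q\omega^{m+1}|\bm u(x,\omega)|^2\,{\rm d}\omega
\]
becomes the sum of the main term $\frac{1}{Q-1}\int_1^Q\omega^{m+1}|\tilde{\bm u}(x,\omega)|^2\,{\rm d}\omega$, the remainder term $\frac{1}{Q-1}\int_1^Q\omega^{m+1}|\bm u(x,\omega)-\tilde{\bm u}(x,\omega)|^2\,{\rm d}\omega$, and the cross term $\frac{2}{Q-1}\int_1^Q\omega^{m+1}\Re[\overline{\tilde{\bm u}(x,\omega)}\cdot(\bm u(x,\omega)-\tilde{\bm u}(x,\omega))]\,{\rm d}\omega$. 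Then (\ref{d53}) reduces, exactly as in (\ref{b42})--(\ref{b44}), to showing the main term tends almost surely to $T^{(2)}_{\rm E}(x)$ while the other two tend to zero.

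For the main term I would write $\omega^{m+1}|\tilde{\bm u}(x,\omega)|^2=\omega^{m+1}\mathbb E|\tilde{\bm u}(x,\omega)|^2+Y(x,\omega)$ with $Y(x,\omega):=\omega^{m+1}(|\tilde{\bm u}(x,\omega)|^2-\mathbb E|\tilde{\bm u}(x,\omega)|^2)$. The deterministic part is handled by (\ref{d50}): $\omega^{m+1}\mathbb E|\tilde{\bm u}(x,\omega)|^2=T^{(2)}_{\rm E}(x)+O(\omega^{-1})$, so its average over $[1,Q]$ tends to $T^{(2)}_{\rm E}(x)$ since the $O(\omega^{-1})$ contribution is $O(\ln Q/Q)$. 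For the fluctuation, the point is that each $\Re\tilde u_i(x,\omega)$ and $\Im\tilde u_i(x,\omega)$, $i=1,2$, is a zero-mean Gaussian variable (because $\mathbb E(f_i)=0$), and at any two frequencies $\omega_1,\omega_2$ these form a jointly Gaussian family. Expanding $|\tilde{\bm u}|^2=\sum_{i=1}^2[(\Re\tilde u_i)^2+(\Im\tilde u_i)^2]$ and applying Lemma~\ref{lemma2a} term by term, $\mathbb E(Y(x,\omega_1)Y(x,\omega_2))$ becomes a finite sum of terms $2\,\omega_1^{m+1}\omega_2^{m+1}(\mathbb E[A(\omega_1)B(\omega_2)])^2$, where $A,B$ range over the real and imaginary parts of the components of $\tilde{\bm u}$. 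Each $\mathbb E[A(\omega_1)B(\omega_2)]$ is, after separating real and imaginary parts, a linear combination of the integrals $I$ and $\tilde I$ estimated in Lemma~\ref{lemma2b} — the pairwise correlations $\mathbb E(\tilde u_{ij_1}(x,\omega_1)\overline{\tilde u_{kj_2}(x,\omega_2)})$ were written out explicitly above, and those not annihilated by $\mathbb E(f_1f_2)=0$ are all of this form — so, exactly as in the proof of Theorem~\ref{theorem5b}, $\omega_1^{(m+1)/2}\omega_2^{(m+1)/2}|\mathbb E[A(\omega_1)B(\omega_2)]|\lesssim(1+|\omega_1-\omega_2|)^{-m}+(1+|\omega_1-\omega_2|)^{-n}$ for arbitrary $n\in\mathbb N$. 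Hence $|\mathbb E(Y(x,\omega_1)Y(x,\omega_2))|\lesssim(1+|\omega_1-\omega_2|)^{-2m}$, and since $m\ge d=2>0$, Lemma~\ref{lemma3a} applied to $X_\omega=Y(x,\omega)$ gives $\frac{1}{Q-1}\int_1^Q Y(x,\omega)\,{\rm d}\omega\to0$ almost surely.

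The remainder and cross terms are routine. By Lemma~\ref{lemma1d}, $|\bm u(x,\omega)-\tilde{\bm u}(x,\omega)|\le c\,\omega^{-7/2}$ uniformly for $x\in U$, so the remainder term is bounded by $\frac{c^2}{Q-1}\int_1^Q\omega^{m-6}\,{\rm d}\omega=\frac{c^2}{m-5}\frac{Q^{m-5}-1}{Q-1}\to0$ since $m\in[2,\tfrac52)$ forces $m-5<0$. The cross term is bounded, via the Cauchy--Schwarz inequality, by twice the product of the square roots of the main and remainder terms, hence converges to $2\,T^{(2)}_{\rm E}(x)^{1/2}\cdot0=0$. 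Together these yield (\ref{d53}). For the inverse part, the unique recovery of $\phi$ is immediate from the linear relation (\ref{t1}): since $a_3>0$, $\phi\in C_0^\infty(D)$, and $D$ and $U$ have positive distance, Lemma~\ref{lemma_o} applied with $l=1$, $V_1=D$, $V_2=U$ shows that $\{T^{(2)}_{\rm E}(x):x\in U\}$ determines $\phi$ uniquely.

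I expect the main obstacle to be the ergodic step. There one needs the component-wise decay — that \emph{every} pairwise correlation $\mathbb E(\tilde u_{ij_1}(x,\omega_1)\overline{\tilde u_{kj_2}(x,\omega_2)})$, not merely the contracted quantity $\mathbb E(\tilde{\bm u}(x,\omega_1)\cdot\overline{\tilde{\bm u}(x,\omega_2)})$ covered by Lemma~\ref{lemma2d}, reduces to a linear combination of integrals of the type handled by Lemma~\ref{lemma2b}. This is exactly what the explicit expansions preceding Lemma~\ref{lemma2d} supply, and combining them with joint Gaussianity and Lemma~\ref{lemma2a} is the only place the vector structure demands anything beyond the scalar acoustic argument.
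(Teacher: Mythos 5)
Your proposal is correct and follows essentially the same route as the paper: the same three-way decomposition into main, remainder, and cross terms, the same splitting of the main term into its expectation (handled by (\ref{d50})) and a fluctuation $Y(x,\omega)$ controlled via Lemma \ref{lemma2a}, the Lemma \ref{lemma2b}-type correlation decay, and the ergodic Lemma \ref{lemma3a}, with Lemma \ref{lemma_o} giving uniqueness. Your component-wise expansion of $|\tilde{\bm u}|^2$ before invoking Lemma \ref{lemma2a} is simply the careful scalar version of the step the paper writes directly for the vector quantities, so the arguments coincide.
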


\begin{proof}
Since 
\begin{align*}
&\frac{1}{Q-1}\int_1^Q\omega^{m+1}|\bm{u}(x,\omega)|^2{\rm d}\omega\\
&=\frac{1}{Q-1}\int_1^Q\omega^{m+1}|\tilde{\bm{u}}(x,\omega)+\bm{u}(x,
\omega)-\tilde{\bm{u}}(x,\omega)|^2{\rm d}\omega\\
&=\frac{1}{Q-1}\int_1^Q\omega^{m+1}|\tilde{\bm{u}}(x,\omega)|^2d\omega+
\frac{1}{Q-1}\int_1^Q\omega^{m+1}|\bm{u}(x,\omega)-\tilde{\bm{u}}(x,
\omega)|^2{\rm d}\omega\\
&\quad+\frac{2}{Q-1}\int_1^Q\omega^{m+1}\mathbb
\Re\left[\overline{\tilde{\bm{u}}(x,\omega)}(\bm{u}(x,\omega)-\tilde{\bm{u}}(x,
\omega))\right]{\rm d}\omega
\end{align*}
thus, (\ref{d53}) holds as long as we show that
\begin{align}\label{d56}
&\lim_{Q\rightarrow\infty}\frac{1}{Q-1}\int_1^Q\omega^{m+1}|\tilde{\bm{u}}(x,
\omega)|^2{\rm d}\omega=T^{(2)}_{\rm E}(x),\\\label{d57}
&\lim_{Q\rightarrow\infty}\frac{1}{Q-1}\int_1^Q\omega^{m+1}|\bm{u}(x,
\omega)-\tilde{\bm{u}}(x,\omega)|^2{\rm d}\omega=0,\\\label{d58}
&\lim_{Q\rightarrow\infty}\frac{2}{Q-1}\int_1^Q\omega^{m+1}\mathbb
\Re\left[\overline{\tilde{\bm{u}}(x,\omega)}(\bm{u}(x,\omega)-\tilde{\bm{u}}(x,
\omega))\right]{\rm d}\omega=0.
\end{align}
To prove (\ref{d56}), we denote
$Y(x,\omega)=\omega^{m+1}(|\tilde{\bm{u}}(x,\omega)|^2-\mathbb E
(|\tilde{\bm{u}}(x,\omega)|^2) )$, which yields
\[
\int_1^Q\omega^{m+1}|\tilde{\bm{u}}(x,\omega)|^2{\rm d}\omega
=\int_1^Q\omega^{m+1}\mathbb E(|\tilde{\bm{u}}(x,\omega)|^2){\rm d}\omega+
\int_1^QY(x,\omega){\rm d}\omega.
\]
Hence, (\ref{d56}) holds as long as we prove 
\begin{equation}\label{d59}
\lim_{Q\rightarrow\infty}\frac{1}{Q-1}\int_1^Q\omega^{m+1}\mathbb
E(|\tilde{\bm{u}}(x,\omega)|^2){\rm d}\omega= T^{(2)}_{\rm E}(x), \quad
\lim_{Q\rightarrow\infty}\frac{1}{Q-1}\int_1^Q Y(x,\omega){\rm d}\omega=0.
\end{equation}
Multiplying (\ref{d50}) by $\omega^{m+1}$ and integrating with respect to the
frequency $\omega$ in the internal $(1, Q)$, we arrive at 
\begin{eqnarray*}
\frac{1}{Q-1}\int_1^Q\omega^{m+1}\mathbb
E(|\tilde{\bm{u}}(x,\omega)|^2){\rm d}\omega
=\frac{1}{Q-1}\int_1^Q\left(T^{(2)}_{\rm E}(x)+O(\omega^{m-3})\right){\rm
d}\omega.
\end{eqnarray*}
It is clear to note that 
\begin{eqnarray*}
\bigg|\frac{1}{Q-1}\int_1^QO(\omega^{m-3}){\rm d}\omega\bigg|
\lesssim \frac{1}{Q-1}\int_1^Q\omega^{m-3}{\rm d}\omega
\rightarrow 0\quad {\rm as}\;\;Q\rightarrow \infty,
\end{eqnarray*}
where we use the fact that $m\in [d,d+\frac{1}{2})$. Thus, the first equation in
(\ref{d59}) holds. Now we focus on the second equation in (\ref{d59}) and want
to show that 
\begin{eqnarray*}
\lim_{Q\rightarrow\infty}\frac{1}{Q-1}\int_1^QY(x,\omega){\rm d}\omega=0.
\end{eqnarray*}
By the definition of $Y(x,\omega)$, 
\begin{align*}
Y(x,\omega)&= \omega^{m+1}(|\tilde{\bm{u}}(x,\omega)|^2-\mathbb E
(|\tilde{\bm{u}}(x,\omega)|^2) )\\
&=\omega^{m+1}\Big((\Re\tilde{\bm{u}}(x,\omega))^2-\mathbb
E(\Re\tilde{\bm{u}}(x,\omega))^2+(\Im\tilde{\bm{u}}(x,\omega))^2-\mathbb
E(\Im\tilde{\bm{u}}(x,\omega))^2\Big).
\end{align*}
Therefore, 
\begin{eqnarray*}
\mathbb E(Y(x,\omega_1)Y(x,\omega_2))=I_{E,1}+I_{E,2}+I_{E,3}+I_{E,4},
\end{eqnarray*}
where
\begin{align*}
I_{E,1}&=\omega_1^{m+1} \omega_2^{m+1}\mathbb
E\bigg[((\Re\tilde{\bm{u}}(x,\omega_1))^2-\mathbb
E(\Re\tilde{\bm{u}}(x,\omega_1))^2)((\Re\tilde{\bm{u}}(x,\omega_2))^2-\mathbb
E(\Re\tilde{\bm{u}}(x,\omega_2))^2)\bigg],\\
I_{E,2}&= \omega_1^{m+1} \omega_2^{m+1}\mathbb
E\bigg[((\Re\tilde{\bm{u}}(x,\omega_1))^2-\mathbb
E(\Re\tilde{\bm{u}}(x,\omega_1))^2)((\Im\tilde{\bm{u}}(x,
\omega_2))^2-\mathbb E(\Im\tilde{\bm{u}}(x,\omega_2))^2)\bigg],\\
I_{E,3} &=\omega_1^{m+1} \omega_2^{m+1}\mathbb
E\bigg[((\Im\tilde{\bm{u}}(x,\omega_1))^2-\mathbb
E(\Im\tilde{\bm{u}}(x,\omega_1))^2)((\Re\tilde{\bm{u}}(x,\omega_2))^2-\mathbb
E(\Re\tilde{\bm{u}}(x,\omega_2))^2)\bigg],\\
I_{E,4} &= \omega_1^{m+1} \omega_2^{m+1}\mathbb
E\bigg[((\Im\tilde{\bm{u}}(x,\omega_1))^2-\mathbb
E(\Im\tilde{\bm{u}}(x,\omega_1))^2)((\Im\tilde{\bm{u}}(x,\omega_2))^2-\mathbb
E(\Im\tilde{\bm{u}}(x,\omega_2))^2)\bigg].
\end{align*}
Combing the expression of $\tilde{\bm{u}}(x,\omega)$ and the assumption $\mathbb
E(f_1)=0$, $\mathbb E(f_2)=0$ gives that $\Re \tilde{\bm{u}}(x,\omega)$ and
$\Im\tilde{\bm{u}}(x,\omega)$ are zero-mean Gaussian random variables. Applying
Lemmas \ref{lemma2a} and \ref{lemma2d} leads to
\begin{align*}
I_{E,1}&=2\omega_1^{m+1} \omega_2^{m+1}[\mathbb E
(\Re\tilde{\bm{u}}(x,\omega_1)\Re\tilde{\bm{u}}(x,\omega_2))]^2\\
&=\frac{1}{2}\omega_1^{m+1} \omega_2^{m+1}\bigg[\mathbb
E\Big(\Re(\tilde{\bm{u}}(x,\omega_1)\tilde{\bm{u}}(x,\omega_2))+\Re(\tilde{\bm{
u}}(x,\omega_1)\overline{\tilde{\bm{u}}(x,\omega_2)})\Big)\bigg]^2\\
&\lesssim\bigg[\frac{\omega_1^{\frac{m+1}{2}}\omega_2^{\frac{m+1}{2}}}{
(\omega_1+\omega_2)^n(1+|\omega_1-\omega_2|)^m}+\frac{\omega_1^{\frac{m+1}{2}}
\omega_2^{\frac{m+1}{2}}}{(\omega_1+\omega_2)^{m+1}(1+|\omega_1-\omega_2|)^n}
\bigg]^2\\
&\lesssim \bigg[\frac{1}{(1+|\omega_1-\omega_2|)^{m}}+\frac{1}{
(1+|\omega_1-\omega_2|)^n}\bigg]^2.
\end{align*}
We can obtain the same estimates for $I_{E,2}$, $I_{E,3}$, and $I_{E,4}$ by the
similar arguments. Thus, an application of Lemma \ref{lemma3a} gives 
\begin{eqnarray*}
\lim_{Q\rightarrow\infty}\frac{1}{Q-1}\int_1^QY(x,\omega){\rm d}\omega=0.
\end{eqnarray*}
 
To prove (\ref{d57}), from lemma \ref{lemma1d}, we obtain
\begin{align*}
&\bigg|\frac{1}{Q-1}\int_1^Q\omega^{m+1}|\bm{u}(x,\omega)-\tilde{\bm{u}}(x,
\omega)|^2{\rm d}\omega\bigg| \lesssim
\frac{1}{Q-1}\int_1^Q\omega^{m+1}\omega^{-7}{\rm d}\omega\\
&\lesssim\frac{1}{Q-1}\int_1^Q\omega^{m-6}{\rm d}\omega
\lesssim\frac{1}{m-5}\frac{Q^{ m-5}-1}{Q-1}\rightarrow 0 \quad{\rm
as}\;\;Q\rightarrow\infty.
\end{align*}
To prove (\ref{d58}), by the H\"{o}lder inequality, we have 
\begin{align*}
&\bigg|\frac{2}{Q-1}\int_1^Q\omega^{m+1}
\Re\left[\overline{\tilde{\bm{u}}(x,\omega)}(\bm{u}(x,\omega)-\tilde{\bm{u}}(x,
\omega))\right]{\rm d}\omega\bigg|\\
&\lesssim \frac{2}{Q-1}\int_1^Q\omega^{m+1}|\tilde{\bm{u}}(x,\omega)||\bm{u}(x,
\omega)-\tilde{\bm{u}}(x,\omega)|{\rm d}\omega\\
&\lesssim 2\bigg[\frac{1}{Q-1}\int_1^Q\omega^{m+1}|\tilde{\bm{u}}(x,
\omega)|^2{\rm
d}\omega\bigg]^{\frac{1}{2}}\bigg[\frac{1}{Q-1}\int_1^Q\omega^{m+1}|\bm{u}(x,
\omega)-\tilde{\bm{u}}(x,\omega)|^2{\rm d}\omega\bigg]^{\frac{1}{2}}\\
&\rightarrow 2T^{(2)}_{\rm E}(x)^{\frac{1}{2}}\cdot 0 = 0\quad{\rm
as}\;\;Q\rightarrow\infty.
\end{align*}
The unique determination of $\phi$ by $T_{\rm E}^{(2)}(x)$ for $x\in U$ is a
direct consequence of Lemma \ref{lemma_o}.
\end{proof}

\subsection{The three-dimensional case}

To derive the linear relation between the scattering data and the function in
the principle symbol, it is required to express the wave field more explicitly
than (\ref{s20}). Substituting (\ref{s15}) into (\ref{s5}) gives the wave field 
$\bm{u}(x,\omega)=(u_1(x,\omega), u_2(x,\omega), u_3(x,\omega))^\top$ where each
component $u_i(x,\omega)$ is given by
\begin{eqnarray*}
u_i(x,\omega)=u_{i1}(x,\omega)+u_{i2}(x,\omega)+u_{i3}(x,\omega).
\end{eqnarray*}
Here
\begin{align*}
u_{i1}(x,\omega) &= \frac{1}{4\pi\mu}\int_{\mathbb R^3}\frac{e^{{\rm
i}\kappa_s|x-y|}}{|x-y|}f_i(y){\rm d}y,\\
u_{i2}(x,\omega) &= \frac{1}{4\pi\omega^2}\int_{\mathbb
R^3}\frac{1}{|x-y|^3}\bigg[e^{{\rm i}\kappa_s|x-y|}({\rm
i}\kappa_s|x-y|-1)-e^{{\rm i}\kappa_p|x-y|}({\rm
i}\kappa_p|x-y|-1)\bigg]f_i(y){\rm d}y,\\
u_{i3}(x,\omega) &= -\frac{1}{4\pi\omega^2}\int_{\mathbb
R^3}\bigg[\left(\frac{3({\rm
i}\kappa_s|x-y|-1)}{|x-y|^2}+\kappa_s^2\right)e^{{\rm i}\kappa_s|x-y|}\\
&\qquad\qquad\quad-\left(\frac{3({\rm
i}\kappa_p|x-y|-1)}{|x-y|^2}+\kappa_p^2\right)e^{{\rm
i}\kappa_p|x-y|}\bigg]\frac{(x_i-y_i)}{|x-y|^3}(x-y)\cdot \bm{f}(y){\rm d}y.
\end{align*}

As mentioned above, we need to derive the relationship between the scattering
data and the function $\phi$ in the principle symbol. For this end, it is
required to calculate the exception $\mathbb
E(\bm{u}(x,\omega_1)\cdot\overline{\bm{u}(x,\omega_2)})$. Noting that
$\bm{u}(x,\omega)\in\mathbb C^3$ and each component has been decomposed into
three parts, we obtain 
\begin{align}\label{e4}
&\mathbb E(\bm{u}(x,\omega)\cdot \overline{\bm{u}(x,\omega)})\notag\\
&=\mathbb
E(u_1(x,\omega)\overline{u_1(x,\omega)}+u_2(x,\omega)\overline{u_2(x,\omega)}
+u_3(x,\omega)\overline{u_3(x,\omega)})\notag\\
&=\sum_{i,j=1}^3\mathbb
E\left(u_{1i}(x,\omega)\overline{u_{1j}(x,\omega)}+u_{2i}(x,\omega)\overline{u_{
2j}(x,\omega)}+u_{3i}(x,\omega)\overline{u_{3j}(x,\omega)}\right).
\end{align}
To calculate the expectation $\mathbb
E(\bm{u}(x,\omega_1)\cdot\overline{\bm{u}(x,\omega_2)})$, it is required to
calculate the items on the right hand side of (\ref{e4}). Using the expression
of $u_{ij}(x,\omega)(i,j=1,2,3)$, we have from direct calculations that 
\begin{align*}
\mathbb
E(u_{i1}(x,\omega_1)\overline{u_{i1}(x,\omega_2)})=\frac{1}{16\pi^2\mu^2}\int_{
\mathbb R^6}\frac{e^{{\rm
i}(c_{\rm s}\omega_1|x-y|-c_{\rm s}\omega_2|x-z|)}}{|x-y||x-z|}\mathbb
E(f_i(y)f_i(z)){\rm d}y{\rm d}z,
\end{align*}
\begin{align*}
&\mathbb
E(u_{i1}(x,\omega_1)\overline{u_{i2}(x,\omega_2)})=-\frac{1}{
16\pi^2\mu\omega_2^2}\int_{\mathbb R^6}\bigg[e^{{\rm
i}(c_{\rm s}\omega_1|x-y|-c_{\rm s}\omega_2|x-z|)}({\rm i}c_{\rm
s}\omega_2|x-z|+1)\\
&\qquad\qquad\qquad\qquad-e^{{\rm i}(c_{\rm s}\omega_1|x-y|-c_{\rm
p}\omega_2|x-z|)}({\rm
i}c_{\rm p}\omega_2|x-z|+1)\bigg]\frac{\mathbb
E(f_i(y)f_i(z))}{|x-y||x-z|^3}{\rm d}y{\rm d}z,
\end{align*}
\begin{align*}
&\mathbb E(u_{i1}(x,\omega_1)\overline{u_{i3}(x,\omega_2)})=\frac{1}{
16\pi^2\mu\omega_2^2}
\int_{\mathbb R^6}\bigg[\bigg(\frac{3}{|x-z|^2}({\rm
i}c_{\rm s}\omega_2|x-z|+1)-c_{\rm s}^2\omega_2^2\bigg)\times\\
&e^{{\rm i}(c_{\rm s}\omega_1|x-y|-c_{\rm s}\omega_2|x-z|)}
-\bigg(\frac{3}{|x-z|^2}({\rm i}c_{\rm p}\omega_2|x-z|+1)-c_{\rm
p}^2\omega_2^2\bigg)
e^{{\rm i}(c_{\rm s}\omega_1|x-y|-c_{\rm p}\omega_2|x-z|)}\bigg]\\
&\qquad\qquad\qquad\qquad\qquad\qquad\times\frac{(x_i-z_i)^2}{|x-y||x-z|^3}
\mathbb E(f_i(y)f_i(z)){\rm d}y{\rm d}z,
\end{align*}
\begin{align*}
&\mathbb E(u_{i2}(x,\omega_1)\overline{u_{i1}(x,\omega_2)})=\frac{1}{
16\pi^2\mu\omega_1^2}
\int_{\mathbb R^6}\bigg[({\rm i}c_{\rm s}\omega_1|x-y|-1)e^{{\rm
i}(c_{\rm s}\omega_1|x-y|-c_{\rm s}\omega_2|x-z|)}\\
&\qquad\qquad\qquad\qquad-({\rm i}c_{\rm p}\omega_1|x-y|-1)e^{{\rm
i}(c_{\rm p}\omega_1|x-y|-c_{\rm s}\omega_2|x-z|)}\bigg]\frac{\mathbb
E(f_i(y)f_i(z))}{|x-y|^3|x-z|}{\rm d}y{\rm d}z,
\end{align*}
\begin{align*}
&\mathbb E(u_{i2}(x,\omega_1)\overline{u_{i2}(x,\omega_2)})
=\frac{1}{16\pi^2\omega_1^2\omega_2^2}\times\\
&\int_{\mathbb R^6}\bigg[({\rm i}c_{\rm s}\omega_1|x-y|-1)(-{\rm
i}c_{\rm s}\omega_2|x-z|-1)e^{{\rm i}(c_{\rm s}\omega_1|x-y|-c_{\rm
s}\omega_2|x-z|)}\\
&+({\rm i}c_{\rm p}\omega_1|x-y|-1)(-{\rm i}c_{\rm p}\omega_2|x-z|-1)e^{{\rm
i}(c_{\rm p}\omega_1|x-y|-c_{\rm p}\omega_2|x-z|)}\\
&-({\rm i}c_{\rm s}\omega_1|x-y|-1)(-{\rm i}c_{\rm p}\omega_2|x-z|-1)e^{{\rm
i}(c_{\rm s}\omega_1|x-y|-c_{\rm p}\omega_2|x-z|)}\\
&-({\rm i}c_{\rm p}\omega_1|x-y|-1)(-{\rm i}c_{\rm s}\omega_2|x-z|-1)e^{{\rm
i}(c_{\rm p}\omega_1|x-y|-c_{\rm s}\omega_2|x-z|)}\bigg]\frac{\mathbb
E(f_i(y)f_i(z))}{|x-y|^3|x-z|^3}{\rm d}y{\rm d}z,
\end{align*}
\begin{align*}
&\mathbb E(u_{i2}(x,\omega_1)\overline{u_{i3}(x,\omega_2)})=\frac{1}{
16\pi^2\omega_1^2\omega_2^2}\times\\
&\int_{\mathbb R^6}\bigg[
({\rm i}c_{\rm s}\omega_1|x-y|-1)\left(\frac{3}{|x-z|^2}({\rm
i}c_{\rm s}\omega_2|x-z|+1)+c_{\rm s}^2\omega_2^2\right)e^{{\rm
i}(c_{\rm s}\omega_1|x-y|-c_{\rm s}\omega_2|x-z|)}\\
&+({\rm i}c_{\rm p}\omega_1|x-y|-1)\left(\frac{3}{|x-z|^2}({\rm i}c_{\rm
p}\omega_2|x-z|+1)+c_{\rm p}^2\omega_2^2\right)e^{{\rm i}(c_{\rm
p}\omega_1|x-y|-c_{\rm p}\omega_2|x-z|)}\\
&-({\rm i}c_{\rm s}\omega_1|x-y|-1)\left(\frac{3}{|x-z|^2}({\rm i}c_{\rm
p}\omega_2|x-z|+1)+c_{\rm p}^2\omega_2^2\right)e^{{\rm
i}(c_{\rm s}\omega_1|x-y|-c_{\rm p}\omega_2|x-z|)}\\
&-({\rm i}c_{\rm p}\omega_1|x-y|-1)\left(\frac{3}{|x-z|^2}({\rm
i}c_{\rm s}\omega_2|x-z|+1)+c_{\rm s}^2\omega_2^2\right)e^{{\rm i}(c_{\rm
p}\omega_1|x-y|-c_{\rm s}\omega_2|x-z|)}
\bigg]\\
&\times\frac{(x_i-z_i)^2}{|x-y|^3|x-z|^3}\mathbb E(f_i(y)f_i(z)){\rm d}y{\rm
d}z,
\end{align*}
\begin{align*}
&\mathbb E(u_{i3}(x,\omega_1)\overline{u_{i1}(x,\omega_2)})
=-\frac{1}{16\pi^2\mu\omega_1^2}\times\\
&\int_{\mathbb R^6}\bigg[\bigg(\frac{3}{|x-y|^2}({\rm
i}c_{\rm s}\omega_1|x-y|-1)+c_{\rm s}^2\omega_1^2\bigg)e^{{\rm
i}(c_{\rm s}\omega_1|x-y|-c_{\rm s}\omega_2|x-z|)}\\
&\qquad\qquad-\bigg(\frac{3}{|x-y|^2}({\rm i}c_{\rm p}\omega_1|x-y|-1)+c_{\rm
p}^2\omega_1^2\bigg)e^{{\rm i}(c_{\rm p}\omega_1|x-y|-c_{\rm
s}\omega_2|x-z|)}\bigg]\\
&\qquad\qquad\times\frac{(x_i-y_i)^2}{|x-y|^3|x-z|}\mathbb
E(f_i(y)f_i(z)){\rm d}y{\rm d}z,
\end{align*}
\begin{align*}
&\mathbb E(u_{i3}(x,\omega_1)\overline{u_{i2}(x,\omega_2)})=\frac{1}{
16\pi^2\omega_1^2\omega_2^2}\times\\
&\int_{\mathbb R^6}\bigg[({\rm
i}c_{\rm s}\omega_2|x-z|+1)\left(\frac{3}{|x-y|^2}({\rm
i}c_{\rm s}\omega_1|x-y|-1)+c_{\rm s}^2\omega_1^2\right)e^{{\rm
i}(c_{\rm s}\omega_1|x-y|-c_{\rm s}\omega_2|x-z|)}\\
&+({\rm i}c_{\rm p}\omega_2|x-z|+1)\left(\frac{3}{|x-y|^2}({\rm i}c_{\rm
p}\omega_1|x-y|-1)+c_{\rm p}^2\omega_1^2\right)e^{{\rm i}(c_{\rm
p}\omega_1|x-y|-c_{\rm p}\omega_2|x-z|)}\\
&-({\rm i}c_{\rm p}\omega_2|x-z|+1)\left(\frac{3}{|x-y|^2}({\rm
i}c_{\rm s}\omega_1|x-y|-1)+c_{\rm s}^2\omega_1^2\right)e^{{\rm i}(c_{\rm
s}\omega_1|x-y|-c_{\rm p}\omega_2|x-z|)}\\
&-({\rm i}c_{\rm s}\omega_2|x-z|+1)\left(\frac{3}{|x-y|^2}({\rm i}c_{\rm
p}\omega_1|x-y|-1)+c_{\rm p}^2\omega_1^2\right)e^{{\rm i}(c_{\rm
p}\omega_1|x-y|-c_{\rm s}\omega_2|x-z|)}
\bigg]\\
&\times\frac{(x_i-y_i)^2}{|x-y|^3|x-z|^3}\mathbb E(f_i(y)f_i(z)){\rm d}y{\rm
d}z,
\end{align*}
\begin{align*}
&\mathbb
E(u_{i3}(x,\omega_1)\overline{u_{i3}(x,\omega_2)})=\frac{1}{
16\pi^2\omega_1^2\omega_2^2}
\int_{\mathbb R^6}\bigg[\bigg(\frac{3}{|x-y|^2}({\rm
i}c_{\rm s}\omega_1|x-y|-1)+c_{\rm s}^2\omega_1^2\bigg)\times\\
&\qquad\left(\frac{3}{|x-z|^2}(-{\rm
i}c_{\rm s}\omega_2|x-z|-1)+c_{\rm s}^2\omega_2^2\right)e^{{\rm
i}(c_{\rm s}\omega_1|x-y|-c_{\rm s}\omega_2|x-z|)}\\
&+\left(\frac{3}{|x-y|^2}({\rm i}c_{\rm p}\omega_1|x-y|-1)+c_{\rm
p}^2\omega_1^2\right)\times\\
&\qquad\left(\frac{3}{|x-z|^2}(-{\rm i}c_{\rm p}\omega_2|x-z|-1)+c_{\rm
p}^2\omega_2^2\right)e^{{\rm i}(c_{\rm p}\omega_1|x-y|-c_{\rm
p}\omega_2|x-z|)}\\
&-\bigg(\frac{3}{|x-y|^2}({\rm
i}c_{\rm s}\omega_1|x-y|-1)+c_{\rm s}^2\omega_1^2\bigg)\times\\
&\qquad\left(\frac{3}{|x-z|^2}(-{\rm i}c_{\rm p}\omega_2|x-z|-1)+c_{\rm
p}^2\omega_2^2\right)e^{{\rm i}(c_{\rm s}\omega_1|x-y|-c_{\rm
p}\omega_2|x-z|)}\\
&-\bigg(\frac{3}{|x-y|^2}({\rm i}c_{\rm p}\omega_1|x-y|-1)+c_{\rm
p}^2\omega_1^2\bigg)\times\\
&\qquad\left(\frac{3}{|x-z|^2}(-{\rm
i}c_{\rm s}\omega_2|x-z|-1)+c_{\rm s}^2\omega_2^2\right)e^{{\rm i}(c_{\rm
p}\omega_1|x-y|-c_{\rm s}\omega_2|x-z|)}\bigg]\\
&\times\frac{(x_i-y_i)(x_i-z_i)}{|x-y|^3|x-z|^3}\sum_{j=1}
^3(x_j-y_j)(x_j-z_j)\mathbb E(f_j(y)f_j(z)){\rm d}y{\rm d}z.
\end{align*}

Observe the above expressions, it is easy to see that $\mathbb
E(\bm{u}(x,\omega_1)\cdot\overline{\bm{u}(x,\omega_2)})$ is a linear combination
of $I(x,\omega_1,\omega_2)$ which is defined by (\ref{b11}). A direct
application of Lemma \ref{lemma2b} leads to the following lemma which plays
an important role in the proof of the main results.

\begin{lemma}\label{lemma1e}
For $\omega_1\geq 1, \omega_2\geq 1,$ the estimates
\begin{align*}
|\mathbb E(\bm{u}(x,\omega_1)\cdot\overline{\bm{u}(x,\omega_2)})|&\leq
c_n(1+|\omega_1-\omega_2|)^{-n}(\omega_1+\omega_2)^{-m},\\
|\mathbb E(\bm{u}(x,\omega_1)\cdot \bm{u}(x,\omega_2))|&\leq
c_n(\omega_1+\omega_2)^{-n}(1+|\omega_1-\omega_2|)^{-m}
\end{align*}
holds uniformly for $x\in U$, where $n\in\mathbb N$ is arbitrary and $c_n\geq 0$
is a constant depending only on $n$.
\end{lemma}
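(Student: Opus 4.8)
The plan is to reduce the lemma entirely to Lemma~\ref{lemma2b}, exactly as in the acoustic case. First I would note that, by Assumption~C, $\mathbb E(f_if_j)=0$ for $i\neq j$, so in every product $u_{ki}(x,\omega_1)\overline{u_{kj}(x,\omega_2)}$ occurring in the expansion of (\ref{e4}) (with the two frequencies $\omega_1,\omega_2$) the expectation involves only a single covariance function $\mathbb E(f_i(y)f_i(z))$; since $f_i$ satisfies Assumption~C, this is the Schwartz kernel of a classical pseudo-differential operator of order $-m$ with principal symbol $\phi(y)|\xi|^{-m}$, i.e.\ it is admissible as the ``$\mathbb E(q(y)q(z))$'' appearing in (\ref{b11}). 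Next I would substitute the explicit formulas for $u_{i1},u_{i2},u_{i3}$ (derived above from (\ref{s5}) and (\ref{s15})) into (\ref{e4}), expand each polynomial bracket in $\kappa_{\rm s}=c_{\rm s}\omega$ and $\kappa_{\rm p}=c_{\rm p}\omega$, and collect the powers of $\omega_1,\omega_2$ so produced against the prefactors $\omega_1^{-2},\omega_2^{-2}$ coming from $u_{i2},u_{i3}$. The crucial bookkeeping point is that the highest power of $\omega$ in any bracket is $\omega^2$ (from $\kappa_{\rm s}^2$ or $\kappa_{\rm p}^2$), which is absorbed by the $\omega^{-2}$ prefactor, so each resulting monomial has the form displayed in (\ref{b11}) with integer exponents $l_1,l_2\ge 0$, with $c_1,c_2\in\{c_{\rm s},c_{\rm p}\}$, and with a kernel $K(x,y,z)$ that is a product of finitely many factors $(x_j-y_j),(x_j-z_j)$ divided by $|x-y|^{p_1}|x-z|^{p_2}$; the extreme case $l_1=l_2=0$ does occur, for instance in $\mathbb E(u_{i1}(x,\omega_1)\overline{u_{i1}(x,\omega_2)})$ and in the $\kappa_{\rm s}^2\kappa_{\rm s}^2$, $\kappa_{\rm p}^2\kappa_{\rm p}^2$ contributions to $\mathbb E(u_{i3}(x,\omega_1)\overline{u_{i3}(x,\omega_2)})$.

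Having established this, $\mathbb E(\bm u(x,\omega_1)\cdot\overline{\bm u(x,\omega_2)})$ is a finite linear combination, with coefficients depending only on the fixed constants $\lambda,\mu$, of integrals $I(x,\omega_1,\omega_2)$ of the form (\ref{b11}) with $\min(l_1,l_2)\ge 0$. Since $\min(l_1,l_2)\ge 0$, estimate (\ref{b13}) of Lemma~\ref{lemma2b} gives, for each term,
\[
|I(x,\omega_1,\omega_2)|\le c_n\,(\omega_1+\omega_2)^{-m}(1+|\omega_1-\omega_2|)^{-n},
\]
uniformly in $x\in U$ and for arbitrary $n\in\mathbb N$, and summing over the finitely many terms yields the first estimate. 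For $\mathbb E(\bm u(x,\omega_1)\cdot\bm u(x,\omega_2))$ the same expansion applies, the only difference being that the missing complex conjugate turns the phases into $c_1\omega_1|x-y|+c_2\omega_2|x-z|$; each term is then of the $\tilde I$-type, equivalently an $I$ with $\omega_2$ replaced by $-\omega_2$, and estimate (\ref{b14}) of Lemma~\ref{lemma2b} bounds it by $c_n(\omega_1+\omega_2)^{-n}(1+|\omega_1-\omega_2|)^{-m}$; summing finishes the proof.

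The one genuinely laborious step is the verification in the first paragraph: one has to run through all nine types of products $u_{ki}\overline{u_{kj}}$, carry out the expansion, and confirm in each case that the net exponents $l_1,l_2$ are nonnegative and that $K$ is smooth away from $\{y=x\}\cup\{z=x\}$, so that Lemma~\ref{lemma2b} applies. There is nothing conceptually hard here, but it is error-prone, and one must keep track of the two wave speeds, since several terms carry mixed phases $c_{\rm s}\omega_1|x-y|-c_{\rm p}\omega_2|x-z|$. This causes no extra difficulty, however, because Lemma~\ref{lemma2b} is formulated for arbitrary nonnegative constants $c_1,c_2$ (and, in the proof of (\ref{b14}), for $\omega_2$ of either sign), so the mixed-speed contributions are already covered. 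Finally, the uniformity in $x\in U$ and the fact that $c_n$ depends only on $n$ are inherited verbatim from Lemma~\ref{lemma2b}, using that $U$ lies at positive distance from $D$ (Assumption~A).
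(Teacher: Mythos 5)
Your proposal is correct and follows essentially the same route as the paper: the paper likewise writes out the expectations $\mathbb E(u_{ij}(x,\omega_1)\overline{u_{ik}(x,\omega_2)})$ explicitly, observes that each is a linear combination of integrals of the form (\ref{b11}) with $\min(l_1,l_2)\geq 0$ and $c_1,c_2\in\{c_{\rm s},c_{\rm p}\}$, and concludes by a direct application of Lemma \ref{lemma2b}. Your additional bookkeeping (the $\omega^{-2}$ prefactors absorbing the $\kappa_{\rm s}^2,\kappa_{\rm p}^2$ factors, and the $\tilde I$-type treatment of the unconjugated product) is exactly the verification the paper leaves implicit.
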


Now we are ready to compute the order of $\mathbb E(|u(x,\omega)|^2)$. Let
$\omega_1=\omega_2=\omega$ in 
$\mathbb E(u_{ij}(x,\omega_1)\overline{u_{ik}(x,\omega_1)})$ for $i,j,k=1,2,3$,
a direct application of Lemma \ref{lemma4b} gives that 
\begin{align*}
&\mathbb E(u_{i1}(x,\omega)\overline{u_{i2}(x,\omega)})=O(\omega^{-(m+1)}),\quad
\mathbb E(u_{i2}(x,\omega)\overline{u_{i1}(x,\omega)})=O(\omega^{-(m+1)}),\\
&\mathbb E(u_{i2}(x,\omega)\overline{u_{i2}(x,\omega)})=O(\omega^{-(m+1)}),\quad
\mathbb E(u_{i2}(x,\omega)\overline{u_{i3}(x,\omega)})=O(\omega^{-(m+1)}),\\
&\mathbb E(u_{i3}(x,\omega)\overline{u_{i2}(x,\omega)})=O(\omega^{-(m+1)}),
\end{align*} 
and
\begin{align*}
\mathbb
E(u_{i1}(x,\omega)\overline{u_{i1}(x,\omega)})&=N_{1i}^{(3)}(x)\omega^{-m }
+O(\omega^{-(m+1)}),\\
\mathbb E(u_{i1}(x,\omega)\overline{u_{i3}(x,\omega)})&=N_{2i}^{(3)}(x)(x,
\omega)\omega^{-m}+O(\omega^{-(m+1)}),\\
\mathbb E(u_{i3}(x,\omega)\overline{u_{i1}(x,\omega)})&=N_{3i}^{(3)}(x,
\omega)\omega^{-m}+O(\omega^{-(m+1)}),\\
\mathbb
E(u_{i3}(x,\omega)\overline{u_{i3}(x,\omega)})&=\sum_{j=1}^3N^{(3)}_{4i,j
}(x, \omega)\omega^{-m}+O(\omega^{-(m+1)}),
\end{align*}
where
\begin{align*}
N_{1i}^{(3)}(x)&= b_{1}\int_{\mathbb
R^3}\frac{1}{|x-y|^2}\phi(y){\rm d}y,\\
N_{2i}^{(3)}(x,\omega)&=\int_{\mathbb R^3}(b_{2}e^{{\rm
i}(c_{\rm s}-c_{\rm
p})|x-y|\omega}-b_{1})\frac{(x_i-y_i)^2}{|x-y|^4}\phi(y){\rm d}y,\\
N_{3i}^{(3)}(x,\omega)&=\int_{\mathbb R^3}(b_{2}e^{{\rm
i}(c_{\rm p}-c_{\rm
s})|x-y|\omega}-b_{1})\frac{(x_i-y_i)^2}{|x-y|^4}\phi(y){\rm d}y,\\
N^{(3)}_{4i,j}(x,\omega)&=\int_{\mathbb
R^3}(b_{3}-2b_{2}\cos((c_{\rm s}-c_{\rm
p})|x-y|\omega))\frac{(x_i-y_i)^2(x_j-y_j)^2}{
|x-y|^6}\phi(y){\rm d}y.
\end{align*}
Here $b_{1}, b_{2}, b_{3}$ are positive constants given by
\begin{eqnarray*}
b_{1}=\frac{1}{128\pi^2}c_{\rm s}^{4-m},\quad
b_{2} = \frac{(c_{\rm s}c_{\rm p})^2}{128\pi^2}\left(\frac{2}{c_{\rm s}+c_{\rm
p}}\right)^{m},\quad b_{3}=\frac{1}{128\pi^2}(c_{\rm s}^{4-m}+c_{\rm p}^{4-m}).
\end{eqnarray*}
Therefore
\begin{eqnarray}\label{e18}
\mathbb E(|\bm{u}(x,\omega)|^2)=T^{(3)}_{\rm
E}(x)\omega^{-m}+O(\omega^{-(m+1)}),
\end{eqnarray}
where 
\begin{eqnarray}\label{e19}
T^{(3)}_{\rm E}(x)=(b_3-b_1)\int_{\mathbb R^3}\frac{1}{|x-y|^2}\phi(y){\rm d}y.
\end{eqnarray}
Now we are ready to present the main result of elastic waves for the
three-dimensional case. 

\begin{theorem}
Let the external source $\bm{f}$ be a microlocally isotropic Gaussian random
vector field which satisfies Assumption C. Then for all $x\in U$, it holds
almost surely that
\begin{eqnarray*}
 \lim_{Q\rightarrow\infty}\frac{1}{Q-1}\int_1^Q\omega^{m}|\bm{u}(x,\omega)|^2{
\rm d}\omega=T^{(3)}_{\rm E}(x).\\
\end{eqnarray*}
Moreover, the scattering data $T^{(3)}_{\rm E}(x)$, for $x\in U$, uniquely
determine the micro-correlation strength $\phi$ through the linear relation
(\ref{e19}).
\end{theorem}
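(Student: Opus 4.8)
\emph{Proof proposal.}
The plan is to follow the three-part scheme already used for the three-dimensional acoustic case and the two-dimensional elastic case, now with the weight $\omega^{m}$. Since the Green tensor $\bm G(x,y,\omega)$ in three dimensions is given by the explicit formulas (\ref{s15})--(\ref{s5}) and carries no oscillatory asymptotic expansion in $\omega$ that must be truncated, there is no need to introduce an auxiliary field; one works directly with $\bm u(x,\omega)=-\int_{D}\bm G(x,y,\omega)\cdot\bm f(y)\,{\rm d}y$ from (\ref{s20}). First I would split
\[
\frac{1}{Q-1}\int_1^Q\omega^{m}|\bm u(x,\omega)|^2\,{\rm d}\omega
=\frac{1}{Q-1}\int_1^Q\omega^{m}\,\mathbb E|\bm u(x,\omega)|^2\,{\rm d}\omega
+\frac{1}{Q-1}\int_1^Q Y(x,\omega)\,{\rm d}\omega,
\]
where $Y(x,\omega):=\omega^{m}\big(|\bm u(x,\omega)|^2-\mathbb E|\bm u(x,\omega)|^2\big)$, a real-valued process. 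For the deterministic first term, the expansion (\ref{e18}) gives $\omega^{m}\mathbb E|\bm u(x,\omega)|^2=T^{(3)}_{\rm E}(x)+O(\omega^{-1})$, and averaging over $[1,Q]$ the remainder contributes $O((\ln Q)/(Q-1))\to0$, so this term tends to $T^{(3)}_{\rm E}(x)$.

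The second term is the main point: I would show $\frac{1}{Q-1}\int_1^Q Y(x,\omega)\,{\rm d}\omega\to0$ almost surely by checking the hypotheses of Lemma \ref{lemma3a} for $\omega\mapsto Y(x,\omega)$. The path $\omega\mapsto\bm u(x,\omega)$ is almost surely continuous by (\ref{s20}) and the smooth dependence of $\bm G(x,\cdot,\omega)$ on $\omega$, and $\mathbb E Y(x,\omega)=0$ by construction. Writing $|\bm u|^2=(\Re\bm u)^2+(\Im\bm u)^2$ with $(\Re\bm u)^2=\sum_i(\Re u_i)^2$, each $\Re u_i$, $\Im u_i$ is a zero-mean Gaussian random variable because $\mathbb E(\bm f)=0$ and $\bm u$ is linear in $\bm f$. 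Hence $\mathbb E(Y(x,\omega_1)Y(x,\omega_2))$ is a finite sum of terms $\omega_1^{m}\omega_2^{m}\,\mathbb E\big[(\xi^2-\mathbb E\xi^2)(\zeta^2-\mathbb E\zeta^2)\big]$ with $\xi,\zeta$ zero-mean Gaussian, which by Lemma \ref{lemma2a} equals $2\omega_1^{m}\omega_2^{m}(\mathbb E\xi\zeta)^2$. Each $\mathbb E\xi\zeta$ is a real combination of $\mathbb E(u_i(x,\omega_1)\overline{u_j(x,\omega_2)})$, $\mathbb E(u_i(x,\omega_1)u_j(x,\omega_2))$ and their conjugates; expanding $u_i=u_{i1}+u_{i2}+u_{i3}$ and using $\mathbb E(f_kf_l)=0$ for $k\ne l$, every such quantity (including the $i\ne j$ contributions coming from the $u_{i3}$ pieces, which couple all components through $(x-y)\cdot\bm f(y)$) is a linear combination of the model integral $I(x,\omega_1,\omega_2)$ of (\ref{b11}), and so is estimated uniformly in $x\in U$ by Lemma \ref{lemma2b} (this underlies Lemma \ref{lemma1e}), yielding $|\mathbb E\xi\zeta|\lesssim(\omega_1+\omega_2)^{-n}(1+|\omega_1-\omega_2|)^{-m}+(\omega_1+\omega_2)^{-m}(1+|\omega_1-\omega_2|)^{-n}$ for every $n$. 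Since $\omega_1^{m}\omega_2^{m}\le(\omega_1+\omega_2)^{2m}$ and $\omega_1,\omega_2\ge1$, choosing $n\ge m$ gives $|\mathbb E(Y(x,\omega_1)Y(x,\omega_2))|\lesssim(1+|\omega_1-\omega_2|)^{-2m}$, a polynomial decay of positive order ($m\ge d=3$), so Lemma \ref{lemma3a} applies and the second term vanishes almost surely.

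Combining the two limits yields the ergodic identity. For the inverse part, I would observe that $T^{(3)}_{\rm E}(x)=(b_3-b_1)\int_{\mathbb R^3}|x-y|^{-2}\phi(y)\,{\rm d}y$ with $b_3-b_1=\tfrac{1}{128\pi^2}c_{\rm p}^{4-m}>0$ (recall $4-m\in(\tfrac12,1]$), so knowing $T^{(3)}_{\rm E}(x)$ for $x\in U$ is equivalent to knowing $\int_{\mathbb R^3}|x-y|^{-2}\phi(y)\,{\rm d}y$ on $U$; since $\overline U\cap\overline D=\emptyset$, Lemma \ref{lemma_o} (with $l=2$) then shows $\phi$ is uniquely determined.

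The hard part will be the second step, specifically justifying that all the mixed second moments $\mathbb E(u_i(x,\omega_1)\overline{u_j(x,\omega_2)})$ and $\mathbb E(u_i(x,\omega_1)u_j(x,\omega_2))$ — not only the diagonal $i=j$ ones appearing in $\mathbb E|\bm u|^2$, but also the $i\ne j$ ones produced by the longitudinal terms $u_{i3}$ — reduce, after expansion, to linear combinations of the integral $I$ covered by Lemma \ref{lemma2b}, so that the decay $|\mathbb E(Y(x,\omega_1)Y(x,\omega_2))|\lesssim(1+|\omega_1-\omega_2|)^{-\beta}$ with $\beta>0$ holds uniformly in $x\in U$. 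Once this is in place, the remaining estimates (the $O(\omega^{-1})$ averaging, the Hölder bound for the cross term if one were to use an approximation, and the application of Lemma \ref{lemma_o}) are routine and parallel to the earlier cases.
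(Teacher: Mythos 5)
Your proposal is correct and follows essentially the same route as the paper: the paper's own proof of this theorem simply defers to the two-dimensional elastic argument, which is exactly the decomposition into the deterministic average (handled by (\ref{e18})) plus the fluctuation $Y$ (handled by Lemmas \ref{lemma2a}, \ref{lemma1e}, and \ref{lemma3a}), followed by Lemma \ref{lemma_o} with $l=2$ for the uniqueness. Your observation that no truncated auxiliary field $\tilde{\bm u}$ is needed in three dimensions matches the paper's setup, since Lemma \ref{lemma1e} and the expansion (\ref{e18}) are already stated for $\bm u$ itself.
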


\begin{proof}
Using Lemma \ref{lemma_o} and (\ref{e18}), we may follow the same proof as that
for the two-dimensional case. The details are omitted here for brevity.
\end{proof}

\section{Conclusion}

We have studied an inverse source scattering problem for the two- and
three-dimensional Helmholtz equation and Navier equation. The source 
is assumed to be a generalized Gaussian random function whose covariance
operator is a classical pseudo-differential operator. By an exact expression of
the random wave field and microlocal analysis, we derive a linear integral
equation which connects the principle symbol of the covariance operator and
the amplitude of the scattering data generated from a single realization of the
random source. Based on this relationship, we obtain the uniqueness for the
recovery of the principle symbol of the random source for the Helmholtz
and Navier equations. A possible continuation of this work is to investigate the
uniqueness for Maxwell's equations with a distributional source. Since the
Green tensor has a higher singularity for the Maxwell equations, a new
technique must be developed. Another interesting direction is to study the
uniqueness for the inverse random source problems in inhomogeneous media,
where the analytical Green function or tensor is not available any more and
the present method may not be directly applicable. We hope to be able to report
the progress on these problems in the future.

\end{document}